\numberwithin{equation}{section}
\newcommand{\N}{\mathbb{N}}
\newcommand{\R}{\mathbb{R}}
\newcommand{\B}{\mathbb{B}}
\newcommand{\sfd}{{\sf d}}
\renewcommand{\d}{{\mathrm d}}
\newcommand{\X}{{\rm X}}
\newcommand{\Y}{{\rm Y}}
\newcommand{\1}{\mathbbm 1}
\newcommand{\eps}{\varepsilon}
\newcommand{\fr}{\penalty-20\null\hfill\(\blacksquare\)}
\newtheorem{theorem}{Theorem}[section]
\newtheorem{thmdef}[theorem]{Theorem/Definition}
\newtheorem{lemma}[theorem]{Lemma}
\newtheorem{proposition}[theorem]{Proposition}
\newtheorem{definition}[theorem]{Definition}
\newtheorem{example}[theorem]{Example}
\newtheorem{remark}[theorem]{Remark}
\title{Duals and pullbacks of normed modules}
\author{Nicola Gigli \thanks{SISSA, Via Bonomea 265, 34136 Trieste, Italy. \textit{Email:} {\sf ngigli@sissa.it}}
\and Danka Lu\v{c}i\'{c} \thanks{Universit\`{a} di Pisa, Dipartimento di Matematica, Largo Bruno Pontecorvo 5, 56127 Pisa, Italy.
\textit{Email:} {\sf danka.lucic@dm.unipi.it}}
\and Enrico Pasqualetto \thanks{Scuola Normale Superiore, Piazza dei Cavalieri, 7, 56126 Pisa, Italy.
\textit{Email:} {\sf enrico.pasqualetto@sns.it}}}
\begin{document}
\date{\today}
\maketitle
\begin{abstract}
We give a general description of the dual of the pullback of a normed module. Ours is the natural generalization to the context of modules of the well-known fact that the dual of the Lebesgue-Bochner space $L^p([0,1],B)$ consists - quite roughly said - of $L^q$ maps from $[0,1]$ to the dual $B'$ of $B$ equipped with the weak$^*$ topology. 

In order to state our result, we study various fiberwise descriptions of a normed module that are of independent interest.
\end{abstract}

\bigskip

\textit{2020 Mathematics Subject Classification.} 18F15, 53C23, 28A51, 46G15.

\textit{Key words and phrases.} Normed module, Lebesgue-Bochner space, dual, pullback, von Neumann lifting, measurable Banach bundle.
\tableofcontents
\section{Introduction}

With the aim of developing a differential calculus on metric measure spaces, the first named author introduced in \cite{Gigli14}
the class of \emph{\(L^p\)-normed \(L^\infty\)-modules}. These are Banach spaces endowed with a module structure (over the
ring of \(L^\infty\)-functions) and whose norm is induced by a \emph{pointwise norm} via integration. The prototypical example
is the space of \(p\)-integrable vector fields on a Riemannian manifold, or more generally -- as no continuous structure
is involved -- the \(L^p\)-sections of a measurable Banach bundle over a measure space.
\medskip

The theory of \(L^p\)-normed \(L^\infty\)-modules incorporates the one of Banach spaces -- since the latter can be regarded as normed
modules over a Dirac measure -- and as such it possesses a rich functional-analytic structure. The same theory also wants to be an adaptation of standard tools from differential geometry to more singular settings. As such, several classical constructions from both functional analysis and differential geometry (such as duals and pullbacks) have their natural counterpart in the setting of normed modules and turn out to be very useful in order to
develop an effective vector calculus. For instance, the \emph{tangent module} of a metric measure space (playing the role of the
space of \(2\)-integrable vector fields) is constructed as the module dual of the \emph{cotangent module} (whose elements can be
regarded as the \(2\)-integrable \(1\)-forms); cf.\ with \cite[Sections 2.2 and 2.3]{Gigli14}.

When dealing with this sort of objects, in several circumstances one encounters \emph{duals} and \emph{pullbacks} and it is interesting to know how these operations combine. To give concrete examples, this happens when studying:
\begin{itemize}
\item The \emph{differential} of a map of bounded deformation between two metric measure spaces, introduced in \cite[Proposition 2.4.6]{Gigli14}
(see also \cite[Section 4.2]{GPS18}).
\item The \emph{velocity} of a test plan, introduced in \cite[Theorem 2.3.18]{Gigli14} (see also \cite[Theorem 1.21]{Pasqualetto22}).
\end{itemize}
In both cases, the dual of the pullback of the cotangent module appears: it is important to characterize it and to understand
when it can be identified with the pullback of the tangent module. 

It is worth stressing that Lebesgue--Bochner spaces can be regarded as pullback modules: we have already noticed that a Banach space \(B\) is a normed module over \((\{{\sf o}\},\delta_{\sf o})\).
Moreover, assuming \(\mu_\X(\X)=1\) for simplicity, the Lebesgue--Bochner space \(L^p(\mu_\X;B)\) can be viewed as a normed module over
\(\big(\X\times\{{\sf o}\},\mu_\X\otimes\delta_{\sf o}\big)\cong(\X,\mu_\X)\) and can be identified with the pullback \(p^*B\) of \(B\) under
the unique map \(p\colon\X\times\{{\sf o}\}\to\{{\sf o}\}\). 

This means that understanding the structure of the dual of the pullback of a module means, in particular, understanding the structure of the dual of a Lebesgue--Bochner space and since this latter theory is by now well understood, it can be used as a guideline to study the former. 

Our starting observation is then the fact that the dual \(L^p(\mu_\X;B)'\) of  \(L^p(\mu_\X;B)\) can be regarded -- shortly and roughly said -- as the space of weakly$^*$-measurable $L^q(\mu_\X)$ maps from $\X$ to $B'$: the main result of this manuscript is to generalize this observation to the case of modules and in order to do so we shall need to understand in which sense a module over $\X$ can be seen as space of sections of a suitable Banach bundle over $\X$ itself.

Concerning this latter problem, we are going to provide two different notions of `measurable Banach bundle': a `strong' one, mainly suited  for `primal' modules and conceptually close to the definition of measurability encountered in Lebesgue-Bochner spaces, and a `weak' one, that better describes `dual' modules and conceptually closer to the concept of measurability encountered in Pettis' theory of integration. Let us give some more details:
\begin{itemize}
\item[-] A \emph{strong} Banach bundle is given by a collection $V_\star$ of Banach spaces indexed by points in $\X$ and a collection of `test sections' that are, axiomatically, declared `measurable'. Then a generic section is declared `strongly measurable' provided it can be approximated by these test sections. We remark that this kind of notion is very close to the one of \emph{measurable Banach bundle} studied by Gutman and collaborators (see e.g.\ \cite{Gut93}, \cite{GutKop00} and references therein), although our axiomatization is at times different. Specifically, we do not impose the collection of test sections to be a vector space, but rather a `vector space up to negligible sets', in a sense (see requirement $(a)$ in Definition \ref{def:strbb} for the rigorous formulation): this additional freedom allows us to describe a module as space of strongly measurable sections even without the Axiom of Choice, see below.
\item[-] A \emph{weak} Banach bundle is given by a collection $V'_\star$ of dual Banach spaces indexed by points in $\X$ and a collection of `test vectors', i.e.\ of sections of the collection $V_\star$ of pre-duals. Then a section of  $V'_\star$ is declared `weakly measurable' provided its coupling with any test vector results in a measurable function.
\end{itemize}
As mentioned, we want to use these concepts to describe modules as spaces of sections and beside the theoretical framework provided by the concepts above, we will also need a way of selecting `measurable representatives' of a given a.e.\ defined section. Here as familiar analogy one can think of the problem of assigning to a given (equivalence class up to $\mu_\X$-a.e.\ equality of a) function in $L^p(\mu_\X)$ a measurable representative in  a linear way. We shall study two ways of doing so: these are conceptually analogous, but technically rather different.
\begin{itemize}
\item[1)] On general $\sigma$-finite spaces we shall rely on the theory of \emph{liftings} (\`{a} la von Neumann), see Section \ref{se:gencase}, following a  line of investigation  initiated by the second and the third named authors, together with S.\ Di Marino, in \cite{DMLP21}.
\item[2)] If the underlying measure space is not only $\sigma$-finite, but also separable (a property that always holds if the $\sigma$-algebra is that of Borel sets in a Polish space), then a more constructive procedure is possible, ultimately based on Doob's martingale convergence theorem. An advantage of this approach is that  it does not rely on any use of the Axiom of Choice, beside Countable Dependent. See Section \ref{se:sep}.
\end{itemize}

With these concepts at hand, it will be possible to characterize the dual of the pullback of a given module as space of weakly measurable sections, in a manner strongly resembling the simpler case of the dual of a Lebesgue--Bochner space, see Theorems \ref{thm:pbd1} and  \ref{thm:dpb2}.

\bigskip

The appendices are devoted to further studies about the dual of the pullback of a module. Read in the more familiar context of dual of a Lebesgue--Bochner space, the results are the following:
\begin{itemize}
\item[A)] The dual \(L^p(\mu_\X;B)'\) of \(L^p(\mu_\X;B)\) is isomorphic to the space of `local' maps $T$ from $B$ to $L^q(\mu_\X)$, i.e.\ those linear and continuous maps for which there is $g\in L^q(\mu_\X)^+$ such that $|T(v)|\leq \|v\|_B\,g$ holds $\mu_\X$-a.e.. Here the least (in the $\mu_\X$-a.e.\ sense) such $g$ is called `pointwise norm' of $T$ and denoted by $|T|$, and the space of such local maps is endowed with the $L^q(\mu_\X)$-norm of the pointwise norm. 

The isomorphism is built as follows. Let $L\in L^p(\mu_\X;B)'$ and notice that for $v\in\B$ the map $f\mapsto L(fv)$ sends $L^p(\mu_\X)$ to $\R$ in a linear and continuous way. Hence it is a function in $L^q(\mu_\X)$.
\item[B)] It is well known that $L^q(\mu_\X;B')$ canonically and isometrically embeds in  \(L^p(\mu_\X;B)'\). Here we point out that the image of the embedding is sequentially weakly$^*$ dense in \(L^p(\mu_\X;B)'\).
\end{itemize}
We conclude the Introduction by pointing out that concepts very similar to that of a normed module previously appeared in the literature.
For example, let us only mention the notions of a \emph{randomly normed space} \cite{HLR91} or of a \emph{random normed module}
\cite{Guo-2011}.

\bigskip

\noindent{\bf  Acknowledgements.} We want to thank Proff.\ J.\ Voigt and J.\ Wengenroth for having pointed out to us the lack of weak$^*$ completeness of Banach duals discussed in Remark \ref{re:sorpresa}.

The second named author is supported by the PRIN 2017 `\emph{Gradient flows, Optimal Transport and Metric Measure Structures}'
and the PRIN 2017 `\emph{Variational Methods for Stationary and Evolution Problems with Singularities and Interfaces}', both
funded by the Italian Ministry of Research and University. The third named author is supported by the Balzan project led by Luigi Ambrosio. 

\section{Building modules from sections}

\subsection{Strong Banach bundles}\label{se:sbb}

An \emph{enhanced measurable space} is a triple  \((\X,\Sigma,\mathcal N)\) with $\Sigma$ being a $\sigma$-algebra on the set $\X$ and \(\mathcal N\subset \Sigma\) a \(\sigma\)-ideal (i.e.\ it contains $\varnothing$, is stable by countable unions and $A\subset B$, $B\in\mathcal N$ and $A\in\Sigma$ implies $A\in\mathcal N$).

Elements of $\mathcal N$ are thought of   as the `negligible' subsets in $(\X,\Sigma)$. The typical example of such ideal is the collection $\mathcal N_\mu$ of $\mu$-negligible sets for any given measure $\mu$ on  $(\X,\Sigma)$.

Let  \(V_\star=\{V_x\}_{x\in\X}\) be a collection of Banach spaces indexed by points in $\X$.
Then we define the space of \emph{sections} of \(V_\star\) as
\[
\mathscr S(V_\star)\coloneqq\bigg\{v\colon\X\to\bigsqcup_{x\in\X}V_x\;\bigg|\;v(x)\in V_x,\text{ for every }x\in\X\bigg\}=\prod_{x\in\X}V_x.
\]
Notice that $\mathscr S(V_\star)$ is a vector space, the operations being defined pointwise.

To build a module out of such collection we need to speak about measurable sections and to this aim we need to assume additional structure. In this section we shall assume the existence of a suitable class of sections that are axiomatically taken as measurable and that are used to test measurability of other ones. 

Notice that in the definition below,  the requirements $(i),(ii)$ correspond to being `Borel measurable' and `essentially separably valued' for Lebesgue-Bochner spaces.
\begin{definition}[Strong Banach bundle]\label{def:strbb} A strong Banach bundle is given by: an enhanced measurable  space \((\X,\Sigma,\mathcal N)\), a  collection \(V_\star=\{V_x\}_{x\in\X}\) of Banach spaces and a subset ${\rm Test}(V_\star)$ of  $\mathscr S(V_\star)$ of `test sections' such that:
\begin{itemize}
\item[a)] For every $w_1,w_2\in {\rm Test}(V_\star)$ and $\alpha_1,\alpha_2\in\R$ there are $w\in{\rm Test}(V_\star)$ and $N\in\mathcal N$  such that $\alpha_1w_1(x)+\alpha_2w_2(x)=w(x)$ for every $x\in\X\setminus N$,
\item[b)] $\X\ni x\mapsto\|w(x)\|_{V_x} $ is measurable for any $w\in {\rm Test}(V_\star)$.
\end{itemize} 
We shall denote by $\mathscr S_{\sf str}(V_\star)$ the collection of strongly measurable sections of such  bundle, defined as the set of those $v\in\mathscr S(V_\star)$ such that:
\begin{itemize}
\item[i)] for every $w\in {\rm Test}(V_\star)$ there is $N\in\mathcal N$  such that the map $\X\setminus N\ni x\mapsto \|v(x)-w(x)\|_{V_x}$ is measurable,
\item[ii)] there is a countable collection $(w_n)\subset  {\rm Test}(V_\star)$ and  $N\in\mathcal N$  such that for every $x\in\X\setminus N$ the vector $v(x)$ belongs to the closure of $\{w_n(x)\}$ in $V_x$.
\end{itemize}
\end{definition}
The notation $\mathscr S_{\sf str}(V_\star)$ does not make evident the dependence of this object upon the chosen space ${\rm Test}(V_\star)$, but as this will always be clear from the context, no confusion should occur. 

The following are easily established:
\begin{itemize}
\item[i)] Test sections are strongly measurable.
\item[ii)] For any $v\in \mathscr S_{\sf str}(V_\star)$ there is $N\in\mathcal N$ such that the map $\X\setminus N\ni x\mapsto \|v(x)\|_{V_x}$ is measurable. Indeed, for $(w_n)\subset {\rm Test}(V_\star)$ as in $(ii)$, we can easily find $(A_{n}^k)\subset\Sigma$ and $N\in\mathcal N$ such that the measurable functions $\1_{\X\setminus N}\sum_n \1_{A^k_n}\|v(\cdot)-w_n(\cdot)\|_{V_\cdot}$ go to 0  as $k\to\infty$ for every $x\in\X$. It follows that $\1_{\X\setminus N}\sum_n \1_{A^k_n}\|w_n(\cdot)\|_{V_\cdot}$ go to $\1_{X\setminus N}\|v(\cdot)\|_{V_\cdot}$ pointwise as $k\to\infty$ and the conclusion.
\item[iii)] For $v_1,v_2\in \mathscr S_{\sf str}(V_\star)$ and $\alpha_1,\alpha_2\in\R$ there are $v\in  \mathscr S_{\sf str}(V_\star)$ and $N\in\mathcal N$ such that $\alpha_1v_1(x)+\alpha_2v_2(x)=v(x)$ for any $x\in\X\setminus N$. This follows from the analogous property of  ${\rm Test}(V_\star)$  and an approximation argument like the one just used.
\item[iv)] The class $\mathscr S_{\sf str}(V_\star)$ is stable by sequential pointwise convergence and product with simple functions. In particular, it is stable by product with measurable functions.
\end{itemize}
We now define the equivalence relation $\sim_{{\sf str} }$ on $\mathscr S_{\sf str}(V_\star)$  as: 
\[
v\sim_{\sf str} w\qquad\Leftrightarrow\qquad \|v(x)-w(x)\|_{V_x}=0\quad\forall x\in\X\setminus N,\ \text{ for some $N\in\mathcal N$}.
\]
We also define the $L^\infty(\Sigma,\mathcal N)$-norm as
\[
\|v\|_{L^\infty(\Sigma,\mathcal N)}:=\inf_{N\in\mathcal N}\sup_{x\in \X\setminus N}\|v(x)\|_{V_x}.
\]
Notice that this quantity passes to the quotient and that the subspace $L^\infty(V_\star;\Sigma,\mathcal N)$ of $\mathscr S_{\sf str}(V_\star)/\sim_{\sf str}$ made of (equivalence classes of) sections with finite norm is  a Banach space (and in fact an $L^\infty(\Sigma,\mathcal N)$-normed module in the sense of Definition \ref{def:normed_mod}).

If the $\sigma$-ideal is given by the $\mu$-negligible sets for some measure $\mu$ on $(\X,\Sigma)$, we can introduce $L^p$-sections, $p\in[1,\infty]$, as follows. Fix such $p$ and define
\[
L^p_{\sf str}(V_\star;\mu):=\{v\in \mathscr S_{\sf str}(V_\star)\ :\ \|v(\cdot)\|_{V_\cdot}\in L^p(\mu) \}\ /\ \sim_{\sf str}.
\]
From the above discussion it is clear that the definition is well posed and that $L^p_{\sf str}(V_\star;\mu)$ is an $L^p(\mu)$-normed $L^\infty(\mu)$-module when equipped with its natural pointwise operations (properly passed to the quotient).

\begin{remark}{\rm
Fix a separable \(\sigma\)-finite measure space \((\X,\Sigma,\mu)\). Then one can check that there is a one-to-one correspondence
(up to considering suitable isomorphism classes) between:
\begin{itemize}
\item[i)] Separable strong Banach bundles over \((\X,\Sigma,\mathcal N_\mu)\). Here \emph{separable} means that there is a countable subset
\(\mathcal C\) of \({\rm Test}(V_\star)\) such that for any \(v\in\mathscr S_{\sf str}(V_\star)\) there exist \((w_n)\subset\mathcal C\)
and \(N\in\mathcal N_\mu\) such that \(v(x)\) belongs to the closure of \(\{w_n(x)\}\) in \(V_x\) for every \(x\in\X\setminus N\).
\item[ii)] \emph{Separable measurable Banach \(\mathbb B\)-bundles} on \((\X,\Sigma,\mu)\), in the sense of \cite[Definition 4.1]{DMLP21}.
\end{itemize}
On the one hand, given a separable Banach \(\mathbb B\)-bundle \(\mathbf E\) on \((\X,\Sigma,\mu)\) and a family \(\mathcal C\subset\bar\Gamma(\mathbf E)\)
as in \cite[Proposition 4.4 ii)]{DMLP21}, one obtains a separable strong Banach bundle on \((\X,\Sigma,\mathcal N_\mu)\) by setting
\(V_\star\coloneqq\mathbf E(\star)\) and by choosing as \({\rm Test}(V_\star)\) the \(\R\)-linear span of \(\mathcal C\). On the other hand,
suppose that a separable strong Banach bundle \((V_\star,{\rm Test}(V_\star))\) over \((\X,\Sigma,\mathcal N_\mu)\) is given. By slightly adapting
the arguments in \cite[Remark 4.14]{DMLP21}, one can associate to the strong Banach bundle a \emph{measurable collection of separable Banach spaces}
(in the sense of \cite[Definition 4.5]{DMLP21}), which in turn induces a separable Banach \(\mathbb B\)-bundle \(\mathbf E\) over \((\X,\Sigma,\mu)\)
by \cite[Corollary 4.7]{DMLP21}.

Consequently, it follows from \cite[Theorem 4.15]{DMLP21} that for any \(p\in[1,\infty]\) the section functor
\((V_\star,{\rm Test}(V_\star))\mapsto L^p_{\sf str}(V_\star;\mu)\) is a one-to-one correspondence between (equivalence classes of)
separable strong Banach bundles on \((\X,\Sigma,\mathcal N_\mu)\) and separable \(L^p(\mu)\)-normed modules.
\fr}\end{remark}

\subsection{Weak Banach bundles}\label{se:wbb}

We are now going to consider a different kind of Banach bundles, where measurability is tested via linear functionals. As before, we are given an enhanced measurable space $(\X,\Sigma,\mathcal N)$ and a collection \(V_\star=\{V_x\}_{x\in\X}\) of Banach spaces indexed by points in $\X$. We shall denote by  \(V'_\star=\{V'_x\}_{x\in\X}\) the collection of the topological duals of the spaces, and by $\langle\cdot,\cdot\rangle$ the duality pairing between any
of the spaces $V_x$ and its topological dual $V'_x$.

Notice that the requirement \eqref{eq:weakmeas} below corresponds to the weak measurability requirement appearing in the theory of Pettis integration.
\begin{definition}[Weak Banach bundle]\label{def:wbb} A weak Banach  bundle is given by: an enhanced measurable space \((\X,\Sigma,\mathcal N)\), a collection \(V_\star=\{V_x\}_{x\in\X}\) of Banach spaces and a subset ${\rm Test}(V_\star)$ of  $\mathscr S(V_\star)$ of `test vectors' such that:
\begin{itemize}
\item[a)]  For every $v_1,v_2\in {\rm Test}(V_\star)$ and $\alpha_1,\alpha_2\in\R$ there are $v\in{\rm Test}(V_\star)$ and $N\in\mathcal N$  such that $\alpha_1v_1(x)+\alpha_2v_2(x)=v(x)$ for every $x\in\X\setminus N$.
\end{itemize}
We shall denote by $\mathscr S_{\sf weak}(V'_\star)\subset \mathscr S(V'_\star)$ the collection of weakly measurable sections   defined as the set of those $w\in\mathscr S(V'_\star)$ such that: for every test vector $v\in {\rm Test}(V_\star)$ there is $N\in\mathcal N$ such that 
\begin{equation}
\label{eq:weakmeas}
\X\setminus N \ni x\quad\mapsto\quad \langle w,v\rangle(x)\coloneqq\langle w(x),v(x)\rangle\in\R\text{ is measurable}.
\end{equation}
\end{definition}
As before, the notation $\mathscr S_{\sf weak}(V'_\star)$ hides the dependence of such object on the chosen test vectors, but given that these will always be clear from the context, no confusion should occur. We notice also that the same definition would work (and in fact would be more general) if we chose to test measurability via elements in the dual space, rather than the predual: our choice is motivated by the fact that in all our applications we are dealing with test vectors as above.

It is clear from the definition that $\mathscr S_{\sf weak}(V'_\star)$  is a vector space, stable by sequential pointwise convergence and by product with measurable functions. Also, denoting by  $\widetilde{\rm Test}(V_\star)\subset \mathscr S(V_\star)$ the collection of $v$'s of the form $v=\sum_{i=1}^nf_iv_i$ for $n\in\N$, $f_i:\X\to\R$ measurable and $(v_i)\subset {\rm Test}(V_\star)$ (the operations being defined pointwise), it is clear that: for every $w\in\mathscr S_{\sf weak}(V'_\star) $ and $v\in \widetilde{\rm Test}(V_\star)$ there is $N\in\mathcal N$ such that 
\[
\X\setminus N\ni x\quad\mapsto\quad \langle w,v\rangle(x) \text{ is measurable}.
\]
We now define an equivalence relation $\sim_{\sf weak}$ on $\mathscr S_{\sf weak}(V'_\star) $ as
\[
\begin{split}
w_1\sim_{\sf weak} w_2\qquad\Leftrightarrow\qquad&\forall v\in {\rm Test}(V_\star)\ \exists N\in\mathcal N\text{ such that } \langle w_1-w_2,v\rangle(x)=0\quad\forall x\in \X\setminus N  
\\
\Leftrightarrow\qquad &\forall v\in \widetilde{\rm Test}(V_\star)\ \exists N\in\mathcal N\text{ such that } \langle w_1-w_2,v\rangle(x)=0\quad\forall x\in \X\setminus N  .
\end{split}
\]
To elements of $\mathscr S_{\sf weak}(V'_\star) /\sim_{\sf weak}$ we would like to associate  a `pointwise norm'. It is unclear to us whether this is always possible, but there are at least two relevant cases where it is so. 

The first is when $\mathcal N=\{\varnothing\}$: in this case the norm can be defined pointwise:
\[
|w|(x):=\sup_{\genfrac{}{}{0pt}{2}{v\in \widetilde{\rm Test}(V_\star):}{\|v(\cdot)\|_{V_\cdot}\leq 1}}\langle w,v\rangle(x) \qquad\forall x\in\X
\]
(notice that $x\mapsto |w|(x)$ is not necessarily measurable).

The second is when $\mathcal N$ coincides with the collection of $\mu$-negligible sets for some $\sigma$-finite measure $\mu$ on $(\X,\Sigma)$. In this case we put
\begin{equation}
\label{eq:defnw}
|w|:=\underset{\genfrac{}{}{0pt}{2}{v\in \widetilde{\rm Test}(V_\star):}{\|v(\cdot)\|_{V_\cdot}\leq 1}}{\rm ess\,sup} \langle w,v\rangle,
\end{equation}
notice that this well defines up to $\mu$-a.e.\ equality a measurable $[0,\infty]$-valued function, that we call \emph{pointwise norm} of $w$ and, rather trivially, satisfies the bound
\begin{equation}
\label{eq:dualnorm}
|\langle w,v\rangle|\leq |w|\|v(\cdot)\|_{V_\cdot}\quad\mu-a.e.\qquad\forall w\in \mathscr S_{\sf weak}(V'_\star),\ v\in \widetilde{\rm Test}(V_\star).
\end{equation}
It is readily verified that $\sim_{\sf weak}$ also respects the vector space structure and that the product by measurable functions induces a product with $L^\infty(\mu)$-functions on the quotient. Then for $p\in[1,\infty]$ we can consider the collection  $L^p_{\sf weak}(V'_\star;\mu)$ of $L^p(\mu)$-weakly  integrable elements of  $\mathscr S_{\sf weak}(V'_\star)$  as
\[
L^p_{\sf weak}(V'_\star;\mu):=\{w\in \mathscr S_{\sf weak}(V'_\star)\ :\  |w|\in L^p(\mu) \}\ /\ \sim_{\sf weak}.
\]
On this space the map $w\mapsto \||w|\|_{L^p(\mu)}$ is a norm and what just said trivially ensures that, if we knew that $\big(L^p_{\sf weak}(V'_\star;\mu),\||\cdot|\|_{L^p(\mu)}\big)$ is complete, we could conclude that $L^p_{\sf weak}(V'_\star;\mu)$ is an $L^p(\mu)$-normed $L^\infty(\mu)$-module (i.e.\ all the properties are trivially verified except completeness). In fact, completeness is in place, but its proof is not immediate: we will detail it only in specific cases of interest, but the argument generalizes, see Theorems \ref{prop:chardual}, \ref{thm:pbd1}, \ref{thm:dual2} and \ref{thm:dpb2}.

\section{The theory for separable measure spaces}\label{se:sep}

\subsection{Linear choice of  representatives}

A {separable} measure space  $(\X,\Sigma,\mu)$ is a measure space with $\mu\geq0$ and such that $\Sigma$ equipped with the semidistance  $\sfd(A,B):=\mu(A\Delta B)$ (that can take the value $+\infty$ and be 0 on different sets) is separable as a metric space, more precisely:  there are $(A_n)\subset\Sigma$ such that for any $B\in\Sigma$ with $\mu(B)<\infty$ we have $\inf_n\mu(A_n\Delta B)=0$. Such a collection of sets $(A_n)$ will be called a dense sequence in  $(\X,\Sigma,\mu)$.

In this chapter we shall be concerned with separable and $\sigma$-finite measure spaces $(\X,\Sigma,\mu)$.

\bigskip

We are interested in picking Borel representatives of $L^1$ functions that linearly depend on the (equivalence class of the) given function in $L^1(\mu)$. On separable and $\sigma$-finite spaces this is possible, via the following construction, at least if we give ourselves the freedom of defining the representative only on some measurable set with negligible complement. Below we shall denote by $L^1(\mu)$ the usual Banach space of $\mu$-equivalence classes of integrable functions and by $\mathcal L^1(\mu)$ the collection of real valued $\Sigma$-measurable maps that are $\mu$-integrable. For given $f:\X\to\R$ $\Sigma$-measurable, by $\pi_\mu(f)$ we denote its equivalence class up to $\mu$-a.e.\ equality.

Fix a  separable and $\sigma$-finite space $(\X,\Sigma,\mu)$ and a dense sequence $(A_n)$. Then  for every $k\in\N$  let $\Sigma_k$ be the (finite) $\sigma$-algebra generated by $A_1,\ldots,A_k$ and let  $\mathcal P_k$ be the finest partition of $\X$ made of sets in $\Sigma_k$. Notice that $\mathcal P_{k+1}$ is a refinement of $\mathcal P_k$.

In \cite{BG22} it has been proved the following result (that in turn is  a direct consequence of Doob's martingale convergence theorem):
\begin{theorem}\label{thm:partition_Doob}
Let \((\X,\Sigma,\mu)\) be a separable and $\sigma$-finite  space, $(A_n)\subset\Sigma$ a dense sequence and $\mathcal P:=\{\mathcal P_k:k\in\N\}$ the collection of partitions $\mathcal P_k=\{E_k^n\}_n$ defined as above.

 For any \(k\in\N\), let us define the linear, \(1\)-Lipschitz operator \(P_k\colon L^1(\mu)\to\mathcal L^1(\mu)\) as
\[
P_k(f)\coloneqq\sum_{n:\,\mu(E_k^n)>0}\bigg(\fint_{E_k^n}f\,\d\mu\bigg)\1_{E_k^n},\quad\text{ for every }f\in L^1(\mu).
\]
For \(f\in L^1(\mu)\), we define   \({\rm Leb}(f)\in\Sigma \) and the measurable function \({\rm Rep}(f)\colon\X\to\R\) as
\[\begin{split}
{\rm Leb}(f)&\coloneqq\Big\{x\in\X\;\Big|\;\exists\lim_{k\to\infty}P_k(f)(x)\in\R\Big\},\\
{\rm Rep}(f)&\coloneqq\left\{\begin{array}{ll}
\lim_{k\to\infty}P_k(f)(x),\\
0,
\end{array}\quad\begin{array}{ll}
\text{ if }x\in{\rm Leb}(f),\\
\text{ if }x\in\X\setminus{\rm Leb}(f),
\end{array}\right.
\end{split}\]
respectively. Then it holds that \(\mu\big(\X\setminus{\rm Leb}(f)\big)=0\) and \(\pi_\mu\big({\rm Rep}(f)\big)=f\) for every \(f\in L^1(\mu)\).
Moreover, for any \(f\in L^1(\mu)\) it holds that \(P_k(f)\to f\) strongly in \(L^1(\mu)\) as \(k\to\infty\).
\end{theorem}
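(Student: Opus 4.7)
The plan is to recognize $\big(P_k(f)\big)_{k\in\N}$ as a martingale with respect to the filtration $(\Sigma_k)_{k\in\N}$ and invoke Doob's almost-sure and $L^1$ martingale convergence theorem, then to identify the limit with $f$ by using the density of $(A_n)$ to argue that the tail $\sigma$-algebra $\Sigma_\infty:=\sigma\big(\bigcup_k\Sigma_k\big)$ coincides with $\Sigma$ modulo $\mu$-null sets.

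First, I would verify the elementary properties of $P_k$. Using $\sigma$-finiteness, one may harmlessly assume that each $A_n$ has finite measure and that $\mu\big(\X\setminus\bigcup_n A_n\big)=0$. Each $\mathcal P_k$ then has at most one atom of infinite measure (namely $\bigcap_{i\le k}A_i^c$), on which the convention of the sum is to assign the value $0$; all other atoms of positive measure have finite measure and the average $\fint_{E_k^n}f\,\d\mu$ is well defined. Linearity is obvious, and the $1$-Lipschitz property follows from
\[
\|P_k(f)\|_{L^1(\mu)}\le\sum_{n:\,0<\mu(E_k^n)<\infty}\bigg|\fint_{E_k^n}f\,\d\mu\bigg|\mu(E_k^n)\le\sum_n\int_{E_k^n}|f|\,\d\mu\le\|f\|_{L^1(\mu)}.
\]
Moreover, directly from the definition, $P_k(f)$ is $\Sigma_k$-measurable and satisfies $\int_E P_k(f)\,\d\mu=\int_E f\,\d\mu$ for every atom $E$ of finite positive measure in $\mathcal P_k$; hence $P_k(f)$ is (a version of) the conditional expectation $\mathbb E[f\mid\Sigma_k]$ on the union of the finite-measure atoms. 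Since $\mathcal P_{k+1}$ refines $\mathcal P_k$, the tower property yields $\mathbb E[P_{k+1}(f)\mid\Sigma_k]=P_k(f)$, i.e.\ $(P_k(f))$ is a martingale.

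Second, since this martingale is closed by $f\in L^1(\mu)$, it is uniformly integrable, so Doob's theorem gives pointwise $\mu$-a.e.\ convergence of $P_k(f)$ to some $f_\infty\in L^1(\mu)$, together with $L^1(\mu)$-convergence; the set ${\rm Leb}(f)$ has full $\mu$-measure and ${\rm Rep}(f)$ is a pointwise representative of $f_\infty$. It remains to identify $f_\infty=f$ in $L^1(\mu)$, which amounts to proving $\Sigma_\infty=\Sigma$ modulo $\mathcal N_\mu$.

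Third, for every $B\in\Sigma$ with $\mu(B)<\infty$, density of $(A_n)$ yields a subsequence with $\mu(A_{n_j}\Delta B)\to 0$, hence $\1_{A_{n_j}}\to\1_B$ in $L^1(\mu)$; since each $\1_{A_{n_j}}$ is $\Sigma_\infty$-measurable, so is (a $\mu$-representative of) $\1_B$, and by $\sigma$-finiteness every $\Sigma$-measurable set is $\Sigma_\infty$-measurable up to a $\mu$-null set. Therefore $\mathbb E[f\mid\Sigma_\infty]=f$ $\mu$-a.e., proving both $\pi_\mu({\rm Rep}(f))=f$ and the $L^1$-convergence statement.

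The one delicate point, which I expect to be the main obstacle, is the careful handling of the $\sigma$-finite (rather than probability) setting: making sure that the atoms of $\mathcal P_k$ of infinite measure are irrelevant (their total contribution to any integral of $f\in L^1(\mu)$ against $\1_{E_k^n}$ eventually vanishes thanks to $\mu\big(\X\setminus\bigcup_n A_n\big)=0$), and that the martingale/conditional-expectation identification is valid on each finite-measure exhausting piece, so that the classical Doob convergence theorem can be applied without modification.
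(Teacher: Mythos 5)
Your route---recognizing $P_k(f)$ as a conditional expectation with respect to the finite $\sigma$-algebras $\Sigma_k$, applying Doob's almost-sure and $L^1$ convergence theorem on finite-measure pieces, and identifying the limit with $f$ via the density of $(A_n)$---is exactly the argument the paper has in mind: Theorem \ref{thm:partition_Doob} is not proved in the paper but quoted from \cite{BG22} as ``a direct consequence of Doob's martingale convergence theorem''. The martingale identification, the closedness/uniform integrability on each finite-measure atom, and the argument that $\Sigma$ is contained in $\sigma\big(\bigcup_k\Sigma_k\big)$ modulo $\mu$-null sets (via $\1_{A_{n_j}}\to\1_B$ in $L^1(\mu)$) are all correct.

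The one step that does not stand as written is the opening reduction ``one may harmlessly assume that each $A_n$ has finite measure''. The operators $P_k$, and hence ${\rm Leb}(f)$ and ${\rm Rep}(f)$, are built from the \emph{given} dense sequence, whose members are allowed to have infinite measure; discarding or modifying them changes the partitions $\mathcal P_k$, so this is not a genuine without-loss-of-generality. Moreover, your stated reason for the irrelevance of infinite-measure atoms, namely $\mu\big(\X\setminus\bigcup_nA_n\big)=0$, is insufficient in general: a point can belong to some $A_n$ of infinite measure and still sit in an infinite-measure atom of $\mathcal P_k$ for every $k$. What is actually needed (and suffices) is the following: letting $R_k$ be the union of the infinite-measure atoms of $\mathcal P_k$, the sets $R_k$ decrease to some $R_\infty$ with $\mu(R_\infty)=0$. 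Indeed, if $B\subset R_\infty$ had $0<\mu(B)<\infty$, pick $A_n$ with $\mu(A_n\Delta B)<\mu(B)/2$; then $\mu(A_n)<\infty$ and $\mu(B\cap A_n)>0$, while for $k\geq n$ every atom of $\mathcal P_k$ meeting $A_n$ is contained in $A_n$ and hence has finite measure, so $A_n\cap R_k=\varnothing$ and thus $B\cap A_n\subset R_k\cap A_n=\varnothing$, a contradiction; $\sigma$-finiteness then yields $\mu(R_\infty)=0$. With this, $\mu$-a.e.\ point is eventually contained in a finite-measure atom and your per-piece Doob argument applies verbatim. Relatedly, for the global $L^1(\mu)$-convergence you should not rely on ``uniform integrability plus a.e.\ convergence'' on the whole (possibly infinite-measure) space; the cleanest conclusion uses what you already set up: $P_k$ is a contraction of $L^1(\mu)$ and $P_k(\1_B)\to\1_B$ in $L^1(\mu)$ for every $B$ of finite measure (approximate $B$ by a finite-measure $A_n$ and note that $P_k(\1_{A_n})=\1_{A_n}$ holds $\mu$-a.e.\ for $k\geq n$), so the convergence extends to all of $L^1(\mu)$ by density and linearity.
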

A simple consequence of the above is:
\begin{proposition}\label{prop:properties_Leb} With the same assumptions and notation as  in Theorem \ref{thm:partition_Doob} above, we have:
\begin{itemize}
\item[\(\rm i)\)] Let \(f,g\in L^1(\mu)\) be such that \(f\leq g\) in the \(\mu\)-a.e.\ sense. Then
\begin{equation}\label{eq:ineq_BR}
{\rm Rep}(f)(x)\leq{\rm Rep}(g)(x),\quad\text{ for every }x\in{\rm Leb}(f)\cap{\rm Leb}(g).
\end{equation}
\item[\(\rm ii)\)] Let \(f,g\in L^1(\mu)\) and \(\alpha,\beta\in\R\) be given. Then \({\rm Leb}(f)\cap{\rm Leb}(g)\subset{\rm Leb}(\alpha f+\beta g)\) and
\[
{\rm Rep}(\alpha f+\beta g)(x)=\alpha\,{\rm Rep}(f)(x)+\beta\,{\rm Rep}(g)(x),\quad\text{ for every }x\in{\rm Leb}(f)\cap{\rm Leb}(g).
\]
In particular, given any \(x\in\X\), the collection \(\big\{f\in L^1(\mu)\,:\,x\in{\rm Leb}(f)\big\}\) is a linear subspace of \(L^1(\mu)\) 
and \(\big\{f\in L^1(\mu)\,:\,x\in{\rm Leb}(f)\big\}\ni f\mapsto{\rm Rep}(f)(x)\in\R\) is a linear function.
\end{itemize}
\end{proposition}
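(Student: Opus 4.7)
Both items will follow by transporting two elementary properties of the operators $P_k$ through the pointwise limit that defines ${\rm Rep}$.

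The starting observation is that, by the explicit formula in Theorem \ref{thm:partition_Doob}, each $P_k\colon L^1(\mu)\to\mathcal L^1(\mu)$ is both linear and monotone at the pointwise level. Indeed, linearity of the integral gives
\[
P_k(\alpha f+\beta g)(x)=\alpha\,P_k(f)(x)+\beta\,P_k(g)(x)\qquad\text{for every }x\in\X,
\]
where on cells $E_k^n$ with $\mu(E_k^n)=0$ both sides equal $0$ by the convention implicit in the formula. Similarly, if $f\leq g$ holds $\mu$-a.e.\ then $\fint_{E_k^n}f\,\d\mu\leq\fint_{E_k^n}g\,\d\mu$ on every cell of positive measure, so $P_k(f)(x)\leq P_k(g)(x)$ for every $x\in\X$.

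For item (i), I then fix $x\in{\rm Leb}(f)\cap{\rm Leb}(g)$: both sequences $(P_k(f)(x))_k$ and $(P_k(g)(x))_k$ converge in $\R$ by the very definition of ${\rm Leb}$, so the pointwise inequality just established is preserved under passage to the limit and yields \eqref{eq:ineq_BR}. For item (ii), fixing $x\in{\rm Leb}(f)\cap{\rm Leb}(g)$, the pointwise identity displayed above shows that $P_k(\alpha f+\beta g)(x)$ equals the convergent combination $\alpha P_k(f)(x)+\beta P_k(g)(x)$; hence $x\in{\rm Leb}(\alpha f+\beta g)$, and taking the limit in $k$ gives the claimed formula. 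The `In particular' clause is just a restatement of this pointwise linearity. I do not foresee any real obstacle: the entire argument is that linearity and monotonicity of the finite conditional-expectation operators $P_k$ hold pointwise everywhere on $\X$, and are therefore inherited on the set where the defining limit exists; the only subtlety worth checking is the handling of zero-measure cells in the formula for $P_k$, which is resolved by the convention that $P_k$ vanishes on them.
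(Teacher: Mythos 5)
Your argument is correct and coincides with the paper's own proof: both items follow from the pointwise linearity and monotonicity of the operators $P_k$, inherited in the limit on ${\rm Leb}(f)\cap{\rm Leb}(g)$. Your extra remark about zero-measure cells is a harmless clarification of the same approach.
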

\begin{proof}
To prove i), observe that \(P_k(f)\leq P_k(g)\) for every \(k\in\N\) whenever \(f\leq g\) holds \(\mu\)-a.e..
To prove ii), just notice that \(P_k(\alpha f+\beta g)=\alpha P_k(f)+\beta P_k(g)\) holds for every \(k\in\N\).
\end{proof}
\subsection{Modules as spaces of sections}\label{se:mss}
Standing assumptions of this section are:
\begin{itemize}
\item[-]  $(\X,\Sigma_\X,\mu_\X)$ and  $(\Y,\Sigma_\Y,\mu_\Y)$ are  separable and $\sigma$-finite measure spaces,
\item[-] $\varphi:\Y\to\X$ is a measurable map with $\varphi_*\mu_\Y=\mu_\X$,
\item[-] $p,q\in(1,\infty)$ are with $\frac1p+\frac1q=1$,
\item[-] $\mathscr M$ is an $L^p(\mu_\X)$-normed module.
\end{itemize}

\subsubsection{The starting module \texorpdfstring{$\mathscr M$}{M}}\label{se:startm}

We assume the reader familiar with the concept of $L^p$-normed $L^\infty$-module (or briefly $L^p$-normed module) as developed in \cite{Gigli14}.  Our goal in this section is to show that our given module $\mathscr M$ is isomorphic to the space of $L^p$-integrable sections of some strong Banach bundle.  We shall do so by constructing such Banach bundle starting from $\mathscr M$ itself and to this aim we shall start from  the construction in Theorem \ref{thm:partition_Doob}. We thus fix a sequence of partitions  \(\mathcal P\)  of $\X$ as in such theorem, then we consider the following definition:
\begin{definition}\label{def:lebp}
Let \((\X,\Sigma_\X,\mu_\X)\) be a separable and $\sigma$-finite measure space and \(\mathcal P\) as in Theorem \ref{thm:partition_Doob}. Let \(p\in(1,\infty)\) be a given exponent. Then
to any function \(f\in L^p(\mu_\X)^+\) we associate the  set \({\rm Leb}_p(f)\in\Sigma_\X\) and the measurable function \({\rm Rep}_p(f)\colon\X\to[0,+\infty)\), defined as
\[
{\rm Leb}_p(f)\coloneqq{\rm Leb}(f^p),\qquad{\rm Rep}_p(f)(x)\coloneqq{\rm Rep}(f^p)(x)^{1/p},\quad\text{ for every }x\in\X,
\]
respectively.
\end{definition}
Clearly, ${\rm Rep}_p$ is not  linear as ${\rm Rep}$, but key properties, useful in particular when studying duality relations, are retained, as we are now going to see.

Let \(\mathcal P=(\mathcal P_k)_{k\in\N}\) be our partitions, say \(\mathcal P_k=(E_k^n)_n\).  We  set
\[
E_k(x)\coloneqq E_k^n,\quad\text{ for every }k,n\text{ and }x\in E_k^n.
\]
\begin{lemma}\label{lem:BR_p} With the  notation and assumptions as in Section \ref{se:mss} the following holds:
\begin{itemize}
\item[\(\rm i)\)] Let \(f,g\in L^p(\mu_\X)\) and \(\alpha,\beta\in\R\) be given. Then
\begin{equation}\label{eq:ineq_BR_p}
{\rm Rep}_p(|\alpha f+\beta g|)(x)\leq|\alpha|\,{\rm Rep}_p(|f|)(x)+|\beta|\,{\rm Rep}_p(|g|)(x),
\end{equation}
for every \(x\in{\rm Leb}_p(|f|)\cap{\rm Leb}_p(|g|)\cap{\rm Leb}_p(|\alpha f+\beta g|)\).
\item[\(\rm ii)\)] Let \(f\in L^p(\mu_\X)^+\) and \(g\in L^q(\mu_\X)^+\) be given. Then
\[
{\rm Rep}(fg)(x)\leq{\rm Rep}_p(f)(x)\,{\rm Rep}_q(g)(x),\quad\text{ for every }x\in{\rm Leb}_p(f)\cap{\rm Leb}_q(g)\cap{\rm Leb}(fg).
\]
\end{itemize}
\end{lemma}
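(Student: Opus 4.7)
The plan is to deduce both inequalities at the level of the approximating operators $P_k$ of Theorem \ref{thm:partition_Doob} and then pass to the limit $k\to\infty$ on the relevant Lebesgue sets. The key observation is that, by definition, for every $h\in L^1(\mu_\X)$ and every $x$ lying in an atom $E_k(x)$ of $\mathcal P_k$ with $\mu_\X(E_k(x))>0$, one has
\[
P_k(h)(x)=\fint_{E_k(x)}h\,\d\mu_\X,
\]
so $P_k$ acts as an expectation with respect to the normalized probability measure on each atom.

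For part i), Minkowski's inequality applied on the probability space $\bigl(E_k(x),\mu_\X|_{E_k(x)}/\mu_\X(E_k(x))\bigr)$ yields, for every $k\in\N$ and every $x$ in an atom of positive measure,
\[
\bigl(P_k(|\alpha f+\beta g|^p)(x)\bigr)^{1/p}\leq|\alpha|\bigl(P_k(|f|^p)(x)\bigr)^{1/p}+|\beta|\bigl(P_k(|g|^p)(x)\bigr)^{1/p}.
\]
At any $x$ in ${\rm Leb}_p(|f|)\cap{\rm Leb}_p(|g|)\cap{\rm Leb}_p(|\alpha f+\beta g|)$ the three sequences $P_k(|f|^p)(x)$, $P_k(|g|^p)(x)$, $P_k(|\alpha f+\beta g|^p)(x)$ all converge in $\R$ by the very definition of ${\rm Leb}_p$; taking $k\to\infty$ and using continuity of $t\mapsto t^{1/p}$ on $[0,\infty)$ yields \eqref{eq:ineq_BR_p}.

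For part ii), H\"{o}lder's inequality on the same normalized probability space $(E_k(x),\mu_\X|_{E_k(x)}/\mu_\X(E_k(x)))$ gives
\[
P_k(fg)(x)\leq\bigl(P_k(f^p)(x)\bigr)^{1/p}\bigl(P_k(g^q)(x)\bigr)^{1/q}
\]
for every $k$ and every such $x$, and passing to the limit on ${\rm Leb}_p(f)\cap{\rm Leb}_q(g)\cap{\rm Leb}(fg)$ gives the claimed pointwise bound.

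I do not foresee a genuine obstacle: the whole content is that the averaging operators $P_k$ inherit Minkowski and H\"{o}lder from the standard inequalities applied atom by atom, and the sets ${\rm Leb}_p(\cdot)$, ${\rm Leb}(\cdot)$ are defined exactly so that the relevant pointwise limits exist. The only minor bookkeeping concerns atoms of zero $\mu_\X$-measure in each $\mathcal P_k$; on these $P_k(\cdot)$ vanishes by convention and the inequalities are trivial.
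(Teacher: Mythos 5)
Your proof is correct and follows essentially the same route as the paper: Minkowski's inequality for the averaged $p$-th powers on each atom $E_k(x)$ for part i), H\"older's inequality for the averages on each atom for part ii), then passage to the limit $k\to\infty$, which is legitimate precisely because membership in the relevant ${\rm Leb}$-sets guarantees the pointwise limits exist.
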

\begin{proof}
To prove i), observe that for any \(k\in\N\) we have that
\[\begin{split}
P_k\big(|\alpha f+\beta g|^p\big)(x)^{1/p}&=\frac{\|\alpha f+\beta g\|_{L^p(\mu_\X|_{E_k(x)})}}{\mu_\X(E_k(x))^{1/p}}
\leq|\alpha|\frac{\|f\|_{L^p(\mu_\X|_{E_k(x)})}}{\mu_\X(E_k(x))^{1/p}}+|\beta|\frac{\|g\|_{L^p(\mu_\X|_{E_k(x)})}}{\mu_\X(E_k(x))^{1/p}}\\
&=|\alpha|P_k(|f|^p)(x)^{1/p}+|\beta|P_k(|g|^p)(x)^{1/p},
\end{split}\]
whence \eqref{eq:ineq_BR_p} follows by letting \(k\to\infty\). To prove ii), notice that for any \(k\in\N\) we have that
\[\begin{split}
P_k(fg)(x)&=\fint_{E_k(x)}fg\,\d\mu_\X\leq\bigg(\fint_{E_k(x)}f^p\,\d\mu_\X\bigg)^{1/p}\bigg(\fint_{E_k(x)}g^q\,\d\mu_\X\bigg)^{1/q}\\
&=P_k(f^p)(x)^{1/p}P_k(g^q)(x)^{1/q}
\end{split}\]
by H\"{o}lder's inequality. Then  \({\rm Rep}(fg)(x)\leq{\rm Rep}_p(f)(x){\rm Rep}_q(g)(x)\) follows letting \(k\to\infty\).
\end{proof}
We turn to the study of $\mathscr M$.  For any \(x\in\X\) we define the subset \(\mathscr M_x\subset\mathscr M\) as
\[
\mathscr M_x\coloneqq\big\{v\in\mathscr M\;\big|\;x\in{\rm Leb}_p(|v|)\big\}.
\]
Notice that \(\mathscr M_x\) is a \(1\)-homogeneous subset of \(\mathscr M\) (\emph{i.e.}, \(\alpha v\in\mathscr M_x\) for every \(v\in\mathscr M_x\)
and \(\alpha\in\R\)), but needs not be a linear subspace of \(\mathscr M\). We denote by \(\tilde{\mathscr M}_x\) the linear span of
\(\mathscr M_x\), namely,
\[
\tilde{\mathscr M}_x\coloneqq\bigg\{\sum_{i=1}^n v_i\;\bigg|\;n\in\N,\,(v_i)_{i=1}^n\subset\mathscr M_x\bigg\}.
\]
Then \(\tilde{\mathscr M}_x\) is a linear subspace of \(\mathscr M\). We introduce a seminorm \(\|\cdot\|_x\) on \(\tilde{\mathscr M}_x\) as follows:
\[
\|v\|_x\coloneqq\inf\bigg\{\sum_{i=1}^n{\rm Rep}_p(|v_i|)(x)\;\bigg|\;n\in\N,\,(v_i)_{i=1}^n\subset\mathscr M_x,\,v=\sum_{i=1}^n v_i\bigg\},
\quad\text{ for every }v\in\tilde{\mathscr M}_x.
\]
Notice that
\begin{equation}
\label{eq:consist_norm_fiber}
\|v\|_x={\rm Rep}_p(|v|)(x),\quad\text{ for every }v\in\mathscr M_x,\ \forall x\in\X.
\end{equation}
Indeed,  \(\leq\) is trivial while to prove the converse one we pick any \(v\in\tilde{\mathscr M}_x\) and \(v_1,\ldots,v_n\in\mathscr M_x\)
such that \(v=v_1+\ldots+v_n\). Then \({\rm Rep}_p(|v|)(x)\leq\sum_{i=1}^n{\rm Rep}_p(|v_i|)(x)\) by Lemma \ref{lem:BR_p} i). The inequality
\({\rm Rep}_p(|v|)(x)\leq\|v\|_x\) follows by  the arbitrariness of \(v_1,\ldots,v_n\).

We introduce an equivalence relation \(\sim_x\) on \(\tilde{\mathscr M}_x\), by declaring that two elements \(v,w\in\tilde{\mathscr M}_x\)
satisfy \(v\sim_x w\) if and only if \(\|v-w\|_x=0\). Then the quotient space \(\tilde{\mathscr M}_x/\sim_x\) inherits a norm, which we still
denote by \(\|\cdot\|_x\). We call \(q_x\colon\tilde{\mathscr M}_x\to\tilde{\mathscr M}_x/\sim_x\) the projection on the quotient.
Moreover, we denote by \((\hat{\mathscr M}_x,\hat\iota_x)\) the  completion of \(\big(\tilde{\mathscr M}_x/\sim_x,\|\cdot\|_x\big)\).
Then \(\hat{\mathscr M}_x\) is a Banach space, whose norm we denote again by \(\|\cdot\|_x\). Notice that
\(\iota_x\coloneqq\hat\iota_x\circ q_x\colon\tilde{\mathscr M}_x\to\hat{\mathscr M}_x\) is a linear operator which preserves the (semi)norm.

Clearly, we now have a collection $\hat{\mathscr M}_\star$ of Banach spaces indexed by points in $\X$. To get a strong Banach bundle we need to define a collection of test sections as in Definition \ref{def:strbb}.

Given any element \(v\in\mathscr M\), we define its `measurable representative' 
${\rm Rep}(v)\in \mathscr S(\hat{\mathscr M}_\star)$ as
\[
{\rm Rep}(v)(x)\coloneqq\left\{\begin{array}{ll}
\iota_x(v),\\
0\in\hat{\mathscr M}_x,
\end{array}\quad\begin{array}{ll}
\text{ if }x\in{\rm Leb}_p(|v|),\\
\text{ if }x\in\X\setminus{\rm Leb}_p(|v|).
\end{array}\right.
\]
From the linearity of $\iota_x$ it follows that for any $v_1,v_2\in\mathscr M$ and $\alpha_1,\alpha_2\in\R$, putting $v:=\alpha_1v_1+\alpha_2v_2$ we have
\begin{equation}
\label{eq:perlin}
{\rm Rep}(v)(x)=\alpha_1{\rm Rep}(v_1)(x)+\alpha_2{\rm Rep}(v_2)(x)\qquad\forall x\in {\rm Leb}_p(|v_1|)\cap {\rm Leb}_p(|v_2|)\cap{\rm Leb}_p(|v|).
\end{equation}
This identity ensures that the collection $\{{\rm Rep}(v):v\in\mathscr M\}$ satisfies property $(a)$ in Definition \ref{def:strbb}, while \eqref{eq:consist_norm_fiber} shows that $(b)$ holds (with $\mathcal N:=\mathcal N_{\mu_\X}$). Thus $\{{\rm Rep}(v):v\in\mathscr M\}$ is an admissible family of test sections of  the collection $\hat{\mathscr M}_\star$.

Denoting by $[\cdot]$ the equivalence class of a strongly measurable section w.r.t.\ $\sim_{\sf str}$, it trivially follows from  \eqref{eq:consist_norm_fiber}  that
\begin{equation}
\label{eq:stessanorma}
\|[{\rm Rep}(v)]\|_\cdot= |v|,\qquad\mu_\X-a.e.
\end{equation}
and thus in particular that $\|[{\rm Rep}(v)]\|_\cdot\in L^p(\mu_\X)$.

We then have:
\begin{proposition}\label{prop:realis_mod}
With the notation and assumptions as in Section \ref{se:mss}, the map 
\[
\mathscr M\ni v\mapsto [{\rm Rep}(v)]\in L^p_{\sf str}(\hat{\mathscr M}_\star;\mu_\X)
\] is an isomorphism of $L^p(\mu)$-normed  modules.
\end{proposition}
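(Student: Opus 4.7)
The proof will proceed in four stages: well-definedness, $\R$-linearity, $L^\infty(\mu_\X)$-linearity, and bijectivity. The first two are essentially recorded in the excerpt. Indeed, \eqref{eq:stessanorma} shows that $[{\rm Rep}(v)]$ belongs to $L^p_{\sf str}(\hat{\mathscr{M}}_\star;\mu_\X)$ with pointwise norm $|v|$, so the map is well defined and preserves both the pointwise norm and the $L^p$-norm. Identity \eqref{eq:perlin}, together with the fact that ${\rm Leb}_p(|v_1|)\cap{\rm Leb}_p(|v_2|)\cap{\rm Leb}_p(|\alpha_1 v_1+\alpha_2 v_2|)$ has full $\mu_\X$-measure, yields $\R$-linearity after quotienting by $\sim_{\sf str}$.

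For $L^\infty(\mu_\X)$-linearity it suffices to check that $[{\rm Rep}(\chi_A v)]=\chi_A\,[{\rm Rep}(v)]$ for every $A\in\Sigma_\X$ and $v\in\mathscr{M}$, and then extend by linearity and density of simple functions in $L^\infty$. For this I would analyze $\iota_x(\chi_A v)$ using Doob's theorem applied to $|\chi_A v-v|^p=\chi_{A^c}|v|^p$ and $|\chi_A v|^p=\chi_A|v|^p$: by the martingale convergence property behind $P_k$, we have $P_k(\chi_{A^c}|v|^p)(x)\to 0$ for $\mu_\X$-a.e.\ $x\in A$ and $P_k(\chi_A|v|^p)(x)\to 0$ for $\mu_\X$-a.e.\ $x\in A^c$. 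Via \eqref{eq:consist_norm_fiber} and the fact that $\iota_x$ preserves the seminorm, this forces $\iota_x(\chi_A v)=\iota_x(v)$ on a full-measure subset of $A$ and $\iota_x(\chi_A v)=0$ on a full-measure subset of $A^c$, proving the claim.

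Bijectivity splits into injectivity and surjectivity. Injectivity is immediate from the already-established isometry $\|[{\rm Rep}(v)]\|_{L^p}=\|v\|_{\mathscr{M}}$. For surjectivity, being an isometric embedding of the Banach space $\mathscr{M}$, the image is closed in $L^p_{\sf str}(\hat{\mathscr{M}}_\star;\mu_\X)$, so it suffices to prove density. Let $w\in L^p_{\sf str}(\hat{\mathscr{M}}_\star;\mu_\X)$ and $\eps>0$. Using $\sigma$-finiteness of $\mu_\X$ and integrability of $\|w(\cdot)\|_\cdot^p$, I would first truncate: pick a measurable set $K$ of finite measure on which $\|w(\cdot)\|_\cdot$ is essentially bounded by some $M$, with $\chi_K w$ within $\eps$ of $w$ in $L^p$. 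Then, applying Definition \ref{def:strbb}(ii) to $w$, there exist $(v_n)\subset\mathscr{M}$ with ${\rm Rep}(v_n)(x)$ dense in $\hat{\mathscr{M}}_x$-approximations of $w(x)$ for a.e.\ $x$; using Definition \ref{def:strbb}(i), the sets $A_n\coloneqq\{x\in K:\|w(x)-{\rm Rep}(v_n)(x)\|_x<\eps\}$ are measurable, and $B_n\coloneqq A_n\setminus\bigcup_{m<n}A_m$ partition a full-measure subset of $K$. On $B_n$ we have $|v_n|\leq M+\eps$, hence $\sum_n\|\chi_{B_n}v_n\|_{\mathscr{M}}^p\leq(M+\eps)^p\mu_\X(K)<\infty$, so the gluing property of $L^p$-normed modules produces a unique $\tilde v\in\mathscr{M}$ with $\chi_{B_n}\tilde v=\chi_{B_n}v_n$ for all $n$. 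Applying the $L^\infty$-linearity just proved (pieced together with the fact that $\mathscr S_{\sf str}(\hat{\mathscr{M}}_\star)$ is stable by sequential pointwise convergence) gives ${\rm Rep}(\tilde v)(x)={\rm Rep}(v_n)(x)$ for $\mu_\X$-a.e.\ $x\in B_n$, whence $\|[{\rm Rep}(\tilde v)]-[w]\|_{L^p}\leq\eps\,\mu_\X(K)^{1/p}+\eps$.

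The main technical obstacle is the surjectivity step, specifically the interplay between pointwise approximation in the fibers, measurable choice of the approximating indices, and the integrability required to invoke the gluing in $\mathscr{M}$. Truncation to a finite-measure set with a uniform bound on $\|w\|_\cdot$ is essential so that the pieced-together sections have bounded norm and the series assembling $\tilde v$ converges in $\mathscr{M}$. Once this is handled, closedness of the image plus density give surjectivity, completing the proof that the map is an isomorphism of $L^p(\mu_\X)$-normed modules.
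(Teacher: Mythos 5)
Your proof is correct and follows essentially the same route as the paper: norm and module-operation preservation via \eqref{eq:stessanorma} and \eqref{eq:perlin}, and surjectivity from requirement (ii) of Definition \ref{def:strbb} via an approximation argument. Your write-up simply fills in the details the paper leaves implicit (the martingale verification of compatibility with characteristic functions, and the truncation--measurable-selection--gluing scheme behind the density of the image).
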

\begin{proof}
The fact that the given map preserves multiplication with simple functions is obvious. Then taking also \eqref{eq:stessanorma} into account we see that it is continuous, preserves the pointwise norm and also the product with $L^\infty$ functions. It remains to prove surjectivity. In turn, this is a quite direct consequence of the requirement $(ii)$ in the definition of strongly measurable section given in Definition \ref{def:strbb} and an approximation argument.
\end{proof}

\subsubsection{The pullback}
Proposition \ref{prop:realis_mod} above gives a representation of $\mathscr M$ as space of sections: starting from it we want to provide an analogue description of  the pullback $\varphi^*\mathscr M$.

Let us first recall the definition of the pullback $\varphi^*\mathscr M$ of $\mathscr M$ (see \cite{Gigli14}):
\begin{thmdef}[Pullback of an $L^p$-normed module]\label{thm:defpb} With the notation and assumptions as in Section \ref{se:mss}, the following holds. There exists a unique couple \((\varphi^*\mathscr M,\varphi^*)\), called
the {pullback} of \(\mathscr M\) under \(\varphi\), where \(\varphi^*\mathscr M\) is an $L^p(\mu_\Y)$-normed module and
\(\varphi^*\colon\mathscr M\to\varphi^*\mathscr M\) is linear and such that:
\begin{itemize}
\item[\(\rm i)\)] The identity \(|\varphi^*v|=|v|\circ\varphi\) holds for every \(v\in\mathscr M\).
\item[\(\rm ii)\)] The space $\{\varphi^*v:v\in \mathscr M\}$ generates \(\varphi^*\mathscr M\) in the sense of $L^\infty(\mu_\Y)$-modules.
\end{itemize}
Uniqueness is up to unique isomorphism: given any \((\mathscr N,T)\) with the same properties, there exists a unique 
isomorphism \(\Phi\colon\varphi^*\mathscr M\to\mathscr N\) such that \(T=\Phi\circ\varphi^*\).
\end{thmdef}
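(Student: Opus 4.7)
The plan is to deduce uniqueness from a standard density argument and to establish existence by an explicit construction of a ``pre-pullback'' followed by quotient and completion. Throughout I would reduce to the case where \(\mu_\Y(\Y)<\infty\) by a standard \(\sigma\)-finite decomposition and gluing.

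For uniqueness, suppose \((\mathscr N_1,T_1)\) and \((\mathscr N_2,T_2)\) both satisfy (i), (ii). First I would check that every element of the \(L^\infty(\mu_\Y)\)-linear span of \(\{T_j v:v\in\mathscr M\}\) can be written in the form \(\sum_{i=1}^n\1_{A_i}T_j(v_i)\) with \((A_i)\) a finite disjoint family in \(\Sigma_\Y\) (reduce simple functions to indicators and then rearrange on disjoint parts). Define
\[
\Phi\Bigl(\sum_{i=1}^n\1_{A_i}T_1(v_i)\Bigr)\coloneqq\sum_{i=1}^n\1_{A_i}T_2(v_i).
\]
Property (i) applied in each module gives \(\bigl|\sum_i\1_{A_i}T_j(v_i)\bigr|=\sum_i\1_{A_i}(|v_i|\circ\varphi)\) \(\mu_\Y\)-a.e.\ for \(j=1,2\); this simultaneously makes \(\Phi\) well-defined (independent of the representation) and shows it preserves the pointwise norm, hence the \(L^p(\mu_\Y)\)-norm. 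By (ii) the domain and range are dense in \(\mathscr N_1\) and \(\mathscr N_2\) respectively, so \(\Phi\) extends uniquely to an isometric isomorphism of \(L^p(\mu_\Y)\)-normed modules; that \(\Phi\) commutes with the \(L^\infty\)-action and satisfies \(T_2=\Phi\circ T_1\) is built in.

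For existence, I would introduce the pre-pullback \(\mathscr M^{\rm pre}\) consisting of finite formal sums \(\sum_{i=1}^n\1_{A_i}\otimes v_i\) with \((A_i)_{i=1}^n\) pairwise disjoint in \(\Sigma_\Y\) of finite \(\mu_\Y\)-measure and \(v_i\in\mathscr M\), identifying \(\1_{A_i}\otimes v_i\) with \(\1_{A_i'}\otimes v_i+\1_{A_i''}\otimes v_i\) whenever \(A_i=A_i'\sqcup A_i''\). Define a \emph{pointwise norm} on \(\mathscr M^{\rm pre}\) by
\[
\Bigl|\sum_{i=1}^n\1_{A_i}\otimes v_i\Bigr|\coloneqq\sum_{i=1}^n\1_{A_i}\,(|v_i|\circ\varphi)\in L^p(\mu_\Y),
\]
and an \(L^\infty(\mu_\Y)\)-action by letting simple functions act on the \(\1_{A_i}\) (extending the partition if needed), extended to all of \(L^\infty(\mu_\Y)\) by density. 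Quotient by the kernel of the seminorm \(\|\cdot\|\coloneqq\bigl\|\,|\cdot|\,\bigr\|_{L^p(\mu_\Y)}\) and take the Banach space completion: call the result \(\varphi^*\mathscr M\). Finally set \(\varphi^*v\coloneqq[\1_\Y\otimes v]\) in the finite-measure case, and assemble via a measurable partition in the \(\sigma\)-finite case. Property (i) holds by construction, property (ii) holds because the image of \(\mathscr M^{\rm pre}\) is by definition dense.

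The main technical obstacle is to verify that the candidate pointwise norm is indeed compatible with the equivalence relations that must be imposed: namely, that \(|\cdot|\) passes to the quotient (i.e.\ two representations of the same element in \(\mathscr M^{\rm pre}\) yield the same function in \(L^p(\mu_\Y)\)), that it extends continuously to the completion as a genuine pointwise norm, and that it satisfies \(|f\cdot w|=|f|\,|w|\) for \(f\in L^\infty(\mu_\Y)\). The key identity underlying all of this is
\[
\Bigl\|\sum_{i=1}^n\1_{A_i}\otimes v_i\Bigr\|^p=\sum_{i=1}^n\int_{A_i}|v_i|^p\circ\varphi\,\d\mu_\Y=\sum_{i=1}^n\int_{\varphi(A_i\cdot)}\!\!|v_i|^p\,\d(\varphi_*(\1_{A_i}\mu_\Y)),
\]
combined with the inequality \(|a+b|_\mathscr M\leq|a|_\mathscr M+|b|_\mathscr M\) in \(\mathscr M\) restricted to preimage sets, which one checks by choosing disjoint common refinements of the partitions appearing in two representations. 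Once this is done, completeness of \(L^p(\mu_\Y)\) transfers to \(\varphi^*\mathscr M\), the pointwise norm extends to the completion by a Cauchy-sequence argument (any Cauchy sequence in the \(L^p\) norm is Cauchy in the pointwise norm along a subsequence by the \(L^p\)-completeness of pointwise norms), and the \(L^\infty(\mu_\Y)\)-action extends by a standard approximation of \(L^\infty\)-functions by simple ones using \(|f\cdot w|\leq|f|\,|w|\).
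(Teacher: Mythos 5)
Your proposal is correct and follows essentially the same route as the paper's source for this statement: the paper itself gives no proof but recalls the result from \cite{Gigli14}, where the pullback is built exactly as you do, via finite simple sums \(\sum_i\1_{A_i}\otimes v_i\) endowed with the pointwise norm \(\sum_i\1_{A_i}|v_i|\circ\varphi\), followed by quotienting out the null space and completing, with \(\varphi^*v:=\1_\Y\otimes v\). Your uniqueness argument (isometric, \(L^\infty\)-linear extension from the dense generated subspace, using the pointwise-norm identity on disjoint pieces for well-posedness) is likewise the standard one.
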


In the previous section we built, starting from an $L^p(\mu_\X)$-normed module $\mathscr M$, a family $\hat{\mathscr M}_\star$ of Banach spaces indexed by points in $\X$. Now we consider on $\Y$ the collection of Banach spaces $\hat{\mathscr M}_{\varphi(\star)}$, i.e.\ the Banach space corresponding to $y\in\Y$ is $\hat{\mathscr M}_{\varphi(y)}$. Also, as test sections consider the collection of maps of the form $y\mapsto {\rm Rep}(v)(\varphi(y))$ as $v$ varies in $\mathscr M$ (i.e.\ the pullbacks via $\varphi$ of the test sections considered before). 

From the assumption $\varphi_*\mu_\Y=\mu_\X$  and \eqref{eq:consist_norm_fiber} and \eqref{eq:perlin} it is immediate to verify that this collection of sections satisfies the requirements $(a),(b)$ in Definition \ref{def:strbb}, hence what we just described is a strong Banach bundle on $(\Y,\Sigma_\Y,\mathcal N_{\mu_\Y})$.

We then have the following characterization of the pullback module. Notice that it is in line with the classical interpretation of pullback bundle in differential geometry, where the fibre at $y$ of the pullback bundle is the same as the fibre at $\varphi(y)$ of the original bundle.
\begin{proposition}
With the notation and assumptions as in Section \ref{se:mss} and just described we have
\[
\big(L^p_{\sf str}(\hat{\mathscr M}_{\varphi(\star)};\mu_\Y),{\rm Pb}\big) \cong \big(\varphi^*\mathscr M,\varphi^*\big) ,
\] 
where the pullback map ${\rm Pb}:\mathscr M\to L^p_{\sf str}(\hat{\mathscr M}_{\varphi(\star)};\mu_\Y)$ is the one sending $v$ to the $\sim_{\sf str}$-equivalence class of  $y\mapsto {\rm Rep}(v)(\varphi(y))$.
\end{proposition}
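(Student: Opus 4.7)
The plan is to invoke the uniqueness clause of Theorem/Definition \ref{thm:defpb}. For this, it is enough to prove that (a) $L^p_{\sf str}(\hat{\mathscr M}_{\varphi(\star)};\mu_\Y)$ is an $L^p(\mu_\Y)$-normed module, (b) ${\rm Pb}$ is linear, (c) $|{\rm Pb}(v)|=|v|\circ\varphi$ $\mu_\Y$-a.e.\ for every $v\in\mathscr M$, and (d) the image of ${\rm Pb}$ generates $L^p_{\sf str}(\hat{\mathscr M}_{\varphi(\star)};\mu_\Y)$ as an $L^\infty(\mu_\Y)$-module. Item (a) is the content of the general discussion in Section \ref{se:sbb}, once we have checked that what we described is indeed a strong Banach bundle on $(\Y,\Sigma_\Y,\mathcal N_{\mu_\Y})$; this in turn uses only $\varphi_*\mu_\Y=\mu_\X$ together with \eqref{eq:consist_norm_fiber} and \eqref{eq:perlin}, exactly as indicated in the paragraph preceding the statement.

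For (b) and (c), I would argue as follows. Pick $v_1,v_2\in\mathscr M$ and $\alpha_1,\alpha_2\in\R$, set $v:=\alpha_1v_1+\alpha_2v_2$, and let $N:=\X\setminus\big({\rm Leb}_p(|v_1|)\cap{\rm Leb}_p(|v_2|)\cap{\rm Leb}_p(|v|)\big)\in\mathcal N_{\mu_\X}$. By $\varphi_*\mu_\Y=\mu_\X$ the set $\varphi^{-1}(N)$ is $\mu_\Y$-negligible, and \eqref{eq:perlin} applied at $x=\varphi(y)$ for $y\in\Y\setminus\varphi^{-1}(N)$ yields ${\rm Pb}(v)(y)=\alpha_1{\rm Pb}(v_1)(y)+\alpha_2{\rm Pb}(v_2)(y)$; passing to $\sim_{\sf str}$-classes gives linearity of ${\rm Pb}$. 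Analogously, from \eqref{eq:stessanorma} we read $\|{\rm Pb}(v)(y)\|_{\hat{\mathscr M}_{\varphi(y)}}=|v|(\varphi(y))$ for $y\in\Y\setminus\varphi^{-1}({\rm Leb}_p(|v|)^c)$, which is (c) and in particular shows ${\rm Pb}(v)\in L^p_{\sf str}(\hat{\mathscr M}_{\varphi(\star)};\mu_\Y)$.

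The main obstacle is (d), the $L^\infty(\mu_\Y)$-density of the image. The plan is: take any $w\in L^p_{\sf str}(\hat{\mathscr M}_{\varphi(\star)};\mu_\Y)$; by condition $(ii)$ in Definition \ref{def:strbb}, applied to the test sections $y\mapsto{\rm Rep}(v_n)(\varphi(y))$, there exist $(v_n)\subset\mathscr M$ and a $\mu_\Y$-negligible $N'\subset\Y$ such that for every $y\in\Y\setminus N'$ the vector $w(y)$ lies in the closure of $\{{\rm Rep}(v_n)(\varphi(y))\}_{n\in\N}$ inside $\hat{\mathscr M}_{\varphi(y)}$. A standard `nearest-neighbour' construction then produces, for each $k\in\N$, a measurable partition $\{A^k_n\}_{n\leq k}$ of $\Y\setminus N'$ such that the section $w_k:=\sum_{n\leq k}\1_{A^k_n}{\rm Pb}(v_n)$ — which lies in the $L^\infty(\mu_\Y)$-linear span of the image of ${\rm Pb}$ — satisfies $\|w_k(y)-w(y)\|_{\hat{\mathscr M}_{\varphi(y)}}\to 0$ and $\|w_k(y)\|_{\hat{\mathscr M}_{\varphi(y)}}\leq 2\|w(y)\|_{\hat{\mathscr M}_{\varphi(y)}}$ $\mu_\Y$-a.e.; dominated convergence in $L^p(\mu_\Y)$ then gives convergence in $L^p_{\sf str}$, so $w$ belongs to the closure of the $L^\infty(\mu_\Y)$-span of $\{{\rm Pb}(v):v\in\mathscr M\}$.

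Having verified (a)--(d), the uniqueness clause of Theorem/Definition \ref{thm:defpb} produces a unique module isomorphism $\Phi\colon\varphi^*\mathscr M\to L^p_{\sf str}(\hat{\mathscr M}_{\varphi(\star)};\mu_\Y)$ with ${\rm Pb}=\Phi\circ\varphi^*$, which is the desired conclusion.
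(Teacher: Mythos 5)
Your proposal is correct and follows essentially the same route as the paper: check that ${\rm Pb}$ is (linear and) pointwise-norm preserving, i.e.\ $\|{\rm Pb}(v)(\cdot)\|_{\hat{\mathscr M}_{\varphi(\cdot)}}=|v|\circ\varphi$ $\mu_\Y$-a.e., and that its image generates $L^p_{\sf str}(\hat{\mathscr M}_{\varphi(\star)};\mu_\Y)$ via requirement $(ii)$ of Definition \ref{def:strbb}, then invoke the uniqueness clause of Theorem/Definition \ref{thm:defpb}; you simply spell out the linearity and the approximation step that the paper leaves as a one-line remark. Only a cosmetic fix in the nearest-neighbour step: to get the pointwise bound $\|w_k(y)\|\leq 2\|w(y)\|$ you should set $w_k(y)\coloneqq 0$ (rather than insist on a genuine partition into the sets $A^k_n$, $n\leq k$) on the set where none of the first $k$ test sections lies within distance $\|w(y)\|$ of $w(y)$ -- on that set $w(y)$ is anyway approached later -- after which the dominated convergence argument goes through as you state.
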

\begin{proof} By \eqref{eq:stessanorma} it follows that
\[
\|{\rm Pb}(v)\|_{\hat{\mathscr M}_{\varphi(x)}}=|v|(\varphi(x)),\qquad\mu_\X-a.e.\ x\in\X
\]
while the very definition of space of  $L^p$ sections of a strong Banach bundle ensures that (the image after quotienting by $\sim_{\sf str}$ of) the test sections generate the whole $L^p$ module (this follows by the requirement $(ii)$ of strongly measurable section).

The conclusion follows from the uniqueness part of Theorem \ref{thm:defpb}.
\end{proof}

\subsubsection{The dual} We denote by \(\hat{\mathscr M}'_\star\) the collection
\(\big\{(\hat{\mathscr M}_x)'\big\}_{x\in\X}\) of dual Banach spaces: we are now going to see that  $\mathscr M^*$ is isomorphic to a space of sections of such collection. Since duality is involved, it is perhaps not surprising that the concept of weak Banach bundle comes into play.

As space of test vectors we pick the collection of maps of the form $x\mapsto {\rm Rep}(v)(x)\in \hat{\mathscr M}_x$ as $v$ varies in $\mathscr M$. By \eqref{eq:perlin} it is clear that this collection satisfies the requirement $(a)$ in Definition \ref{def:wbb} (with $\mathcal N=\mathcal N_{\mu_\X}$). In other words, the space $\X$, the collection  \(\hat{\mathscr M}'_\star\)  of Banach spaces and the set $\{{\rm Rep}(v):v\in\mathscr M\}$ of test vectors form a weak Banach bundle.

We recall from \cite{Gigli14}:
\begin{definition}[Dual of an $L^p$-normed module]\label{def:dualm}
The dual $\mathscr M^*$ of the $L^p(\mu_\X)$-normed module $\mathscr M$ is the space of $L^\infty(\mu_\X)$-linear and continuous maps from $\mathscr M$ to $L^1(\mu_\X)$. It is equipped with the operator norm, with the obvious multiplication by $L^\infty(\mu_\X)$-functions and with the pointwise norm
\[
|L|:=\underset{\genfrac{}{}{0pt}{2}{v\in\mathscr M:}{|v|\leq 1\ \mu_\X-a.e.}}{\rm ess\,sup} L(v).
\]
With this structure, $\mathscr M^*$ is an $L^q(\mu_\X)$-normed module, $\frac1p+\frac1q=1$.
\end{definition}
We then have:
\begin{theorem}\label{prop:chardual} With the above notation and assumptions, we have that $L^q_{\sf weak}(\hat{\mathscr M}'_\star;\mu_\X)$ is an $L^q(\mu_\X)$-normed module and that the map 
\[
\begin{array}{rccc}
{\sf I}:&L^q_{\sf weak}(\hat{\mathscr M}'_\star;\mu_\X)\qquad&\to&\qquad\mathscr M^*\\
&\omega_\cdot\qquad&\mapsto&\quad\big(v\quad\mapsto\quad [\langle \omega_\cdot,{\rm Rep}(v)(\cdot)\rangle]\big)
\end{array}
\]
is an isomorphism of modules, where here $[f]$ denotes the equivalence class up to $\mu_\X$-a.e.\ equality of the $\mu_\X$-measurable function $f$.
\end{theorem}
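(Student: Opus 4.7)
The plan is to verify that \({\sf I}\) takes values in \(\mathscr M^*\) with \(|{\sf I}(\omega)|\le|\omega|\) \(\mu_\X\)-a.e., then to prove the reverse pointwise-norm inequality (which gives both injectivity and isometry of module norms), and finally to address surjectivity. Once this is done, the completeness of \(L^q_{\sf weak}(\hat{\mathscr M}'_\star;\mu_\X)\) is automatic from the isometric identification with the complete module \(\mathscr M^*\).

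For well-definedness, fix \(\omega\in L^q_{\sf weak}(\hat{\mathscr M}'_\star;\mu_\X)\) and \(v\in\mathscr M\). The function \(\langle\omega_\cdot,{\rm Rep}(v)(\cdot)\rangle\) is measurable since \({\rm Rep}(v)\) is a test vector, and combining \eqref{eq:dualnorm} with \eqref{eq:stessanorma} yields \(|\langle\omega_\cdot,{\rm Rep}(v)(\cdot)\rangle|\le|\omega|\cdot|v|\) \(\mu_\X\)-a.e., so by H\"older's inequality this belongs to \(L^1(\mu_\X)\) with norm at most \(\||\omega|\|_{L^q}\||v|\|_{L^p}\). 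Linearity in \(v\) over \(\R\) follows from \eqref{eq:perlin} and the pointwise linearity of each \(\omega_x\), while \(L^\infty(\mu_\X)\)-linearity reduces to the identity \({\rm Rep}(fv)\sim_{\sf str} f\cdot{\rm Rep}(v)\), which is direct for \(f=\1_A\) (from \(\iota_x(\1_A v)=\1_A(x)\iota_x(v)\) at Lebesgue points of \(\1_{A^c}|v|^p\), a co-null set) and extends to general \(f\in L^\infty(\mu_\X)\) via simple functions. The pointwise bound translates to \(|{\sf I}(\omega)|\le|\omega|\) \(\mu_\X\)-a.e.

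For the reverse inequality, recall from \eqref{eq:defnw} that \(|\omega|\) is the essential supremum of \(\langle\omega,u\rangle\) over \(u\in\widetilde{\rm Test}(V_\star)\) with \(\|u(\cdot)\|_{V_\cdot}\le 1\). Any such \(u=\sum_{i=1}^n f_i{\rm Rep}(v_i)\), after truncating each \(f_i\) on an increasing exhaustion where it is bounded, matches up to \(\sim_{\sf str}\) with \({\rm Rep}(w)\) for \(w:=\sum_i f_i v_i\in\mathscr M\) by the \(L^\infty\)-linearity identity above; moreover \(|w|=\|u(\cdot)\|_{V_\cdot}\le 1\) \(\mu_\X\)-a.e.\ by \eqref{eq:stessanorma}. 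Hence \(\langle\omega,u\rangle={\sf I}(\omega)(w)\) \(\mu_\X\)-a.e., so the essential supremum over such \(u\) is dominated by the one over \(\{w\in\mathscr M:|w|\le 1\}\), which equals \(|{\sf I}(\omega)|\) by Definition \ref{def:dualm}. Combined with the first step, this yields \(|\omega|=|{\sf I}(\omega)|\) \(\mu_\X\)-a.e.

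Surjectivity is the main technical obstacle. Given \(L\in\mathscr M^*\), for each \(v\in\mathscr M\) the function \(L(v)\in L^1(\mu_\X)\) has measurable representative \({\rm Rep}(L(v))\) on the co-null set \({\rm Leb}(L(v))\). Chaining Proposition \ref{prop:properties_Leb}(i) applied to \(|L(v)|\le|L|\cdot|v|\), Lemma \ref{lem:BR_p}(ii) applied to \(|L|\cdot|v|\in L^1(\mu_\X)^+\), and the elementary bound \(|{\rm Rep}(f)|\le{\rm Rep}(|f|)\) (from \(|P_k(f)|\le P_k(|f|)\)) yields
\[
|{\rm Rep}(L(v))(x)|\le{\rm Rep}_q(|L|)(x)\cdot{\rm Rep}_p(|v|)(x)={\rm Rep}_q(|L|)(x)\cdot\|v\|_x
\]
on the common Lebesgue set. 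Fiberwise, this shows that the assignment \(\iota_x(v)\mapsto{\rm Rep}(L(v))(x)\) is a bounded pre-functional of norm at most \({\rm Rep}_q(|L|)(x)<\infty\) \(\mu_\X\)-a.e., extending \(\R\)-linearly to the dense subspace \(\iota_x(\tilde{\mathscr M}_x)\) via Proposition \ref{prop:properties_Leb}(ii) applied to the linear images \(L(\sum v_i)=\sum L(v_i)\), and hence uniquely to an element \(\omega_x\in\hat{\mathscr M}'_x\). The delicate point is promoting this fiberwise construction to a weakly measurable section \(x\mapsto\omega_x\): different decompositions of an element of \(\tilde{\mathscr M}_x\) as sums of vectors in \(\mathscr M_x\) produce different Lebesgue sets for the corresponding \(L\)-images, but they agree on their intersection (still co-null), so one obtains a coherent \(\omega_x\) on a co-null \(G\subset\X\) and sets \(\omega_x:=0\) for \(x\in\X\setminus G\). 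By construction \(\langle\omega_\cdot,{\rm Rep}(v)(\cdot)\rangle={\rm Rep}(L(v))\) \(\mu_\X\)-a.e., so \({\sf I}(\omega)=L\), and the isometry established above yields \(|\omega|=|L|\in L^q(\mu_\X)\), confirming \(\omega\in L^q_{\sf weak}(\hat{\mathscr M}'_\star;\mu_\X)\).
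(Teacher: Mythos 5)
Your first two steps are fine: the well-definedness and $L^\infty(\mu_\X)$-linearity argument (via ${\rm Rep}(fv)\sim_{\sf str}f\,{\rm Rep}(v)$) is correct, and your direct proof of $|\omega|\le|{\sf I}(\omega)|$ through \eqref{eq:defnw}, truncation on an exhaustion and \eqref{eq:stessanorma} is a legitimate (and slightly different) route; the paper instead deduces this inequality from the surjectivity construction, which produces for $L={\sf I}(\omega)$ a preimage with pointwise norm $\le|L|$, combined with the injectivity built into $\sim_{\sf weak}$.

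The genuine gap is in the surjectivity step, which is the heart of the proof. You claim that the assignment $\iota_x(v)\mapsto{\rm Rep}(L(v))(x)$ extends, by density of $\iota_x(\tilde{\mathscr M}_x)$ in $\hat{\mathscr M}_x$, ``uniquely to an element $\omega_x\in\hat{\mathscr M}'_x$''. But for a \emph{fixed} $x$ this formula only makes sense on those $v$ for which $x\in{\rm Leb}(L(v))$ (together with the relevant ${\rm Leb}_p$, ${\rm Leb}_q$ conditions). The set of such $v$ is, for fixed $x$, a linear subspace $\mathcal V_x\subset\hat{\mathscr M}_x$ with no reason whatsoever to be $\|\cdot\|_x$-dense: the statement ``$x\in{\rm Leb}(L(v))$ for a.e.\ $x$'' holds for each fixed $v$, but you need ``for fixed $x$, all (or densely many) $v$'', and flipping the quantifiers would require intersecting the uncountably many $\mu_\X$-null exceptional sets indexed by $v\in\mathscr M$ (or by a family whose $\|\cdot\|_x$-density in the fibre is itself unclear, since $\mathscr M$ is not assumed separable and norm convergence in $\mathscr M$ does not control ${\rm Rep}_p(|v_n-v|)(x)$ at the given $x$). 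Your remark that different decompositions ``agree on their intersection (still co-null)'' only settles consistency of the value where it is defined, not the extension to the whole fibre. This is exactly the point where the paper's proof invokes the Hahn--Banach theorem: it proves the bound $|L_x(v)|\le{\rm Rep}_q(|L|)(x)\|v\|_x$ on the subspace $\mathcal V_x$ (by the same H\"older-type estimate you chain together) and then \emph{extends} $L_x$ to all of $\hat{\mathscr M}_x$ by Hahn--Banach, with no density claim; the remark following Theorem \ref{thm:pbd1} confirms that, with the chosen definition of weak Banach bundle (sections whose values are functionals defined on the whole fibre), such an extension step is needed, and that avoiding it requires changing the definition so that $\omega_x$ may be defined only on a subspace of $\hat{\mathscr M}_x$. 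As written, your construction of the section $\omega_\cdot$ does not go through.
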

\begin{proof} If $\omega_\cdot$ is weakly measurable, then by the choice  of test vectors we see that $x\mapsto \langle \omega_x,{\rm Rep}(v)(x)\rangle$ is $\mu_\X$-measurable. Then taking also into account the bound \eqref{eq:dualnorm} we see that ${\sf I}$ is well defined and $|{\sf I}(\omega_\cdot)|\leq |\omega|$ $\mu_\X$-a.e.. Also, ${\sf I}$ is clearly $L^\infty$-linear, thus it only remains to prove that it is surjective and that  $|{\sf I}(\omega_\cdot)|\geq |\omega|$ $\mu_\X$-a.e.\ (notice that this and the completeness of $\mathscr M^*$ give the completeness of $L^q_{\sf weak}(\hat{\mathscr M}'_\star;\mu_\X)$ and thus ensure that this is an $L^q(\mu_\X)$-normed module).

To this aim, let \(L\in\mathscr M^*\) be fixed. Given any \(x\in{\rm Leb}_q(|L|)\), we define \(\tilde{\mathcal V}_x\subset\tilde{\mathscr M}_x\) as
\[
\tilde{\mathcal V}_x\coloneqq\big\{v\in\tilde{\mathscr M}_x\;\big|\;x\in{\rm Leb}(\langle L,v\rangle)\big\}
\]
and \(\mathcal V_x\coloneqq\iota_x(\tilde{\mathcal V}_x)\subset\hat{\mathscr M}_x\). Notice that \(\mathcal V_x\) is a linear subspace of
\(\hat{\mathscr M}_x\) by Proposition \ref{prop:properties_Leb} ii). Clearly, the function \(L_x\colon\mathcal V_x\to\R\), which we define as
\[
L_x(v)\coloneqq{\rm Rep}(\langle L,\tilde v\rangle)(x),\quad\text{ for every }v\in\mathcal V_x
\text{ and }\tilde v\in\tilde{\mathcal V}_x\text{ with }v=\iota_x(\tilde v),
\]
is well-posed and linear. We aim to show that \(L_x\) is \({\rm Rep}_q(|L|)(x)\)-Lipschitz when its domain \(\mathcal V_x\) is endowed with the norm
\(\|\cdot\|_x\) induced from \(\hat{\mathscr M}_x\). Letting \(\mathcal P=(\mathcal P_k)_{k\in\N}\) and \(\mathcal P_k=(E^n_k)_n\) be the partitions used to define ${\rm Rep}$, we set
\(E_k(x)\coloneqq E^n_k\) for every \(k,n\) and \(x\in E^n_k\). Then for any \(x\in{\rm Leb}_q(|L|)\), \(k\in\N\),
\(\tilde v\in\tilde{\mathcal V}_x\), and \((v_i)_{i=1}^n\subset\mathscr M_x\) with \(\tilde v=\sum_{i=1}^n v_i\), we can estimate
\[\begin{split}
\bigg|\fint_{E_k(x)}\langle L,\tilde v\rangle\,\d\mu_\X\bigg|&\leq\fint_{E_k(x)}\big|\langle L,\tilde v\rangle\big|\,\d\mu_\X
\leq\bigg(\fint_{E_k(x)}|L|^q\,\d\mu_\X\bigg)^{1/q}\bigg(\fint_{E_k(x)}|\tilde v|^p\,\d\mu_\X\bigg)^{1/p}\\
&\leq\bigg(\fint_{E_k(x)}|L|^q\,\d\mu_\X\bigg)^{1/q}\sum_{i=1}^n\bigg(\fint_{E_k(x)}|v_i|^p\,\d\mu_\X\bigg)^{1/p}.
\end{split}\]
Since \(x\in{\rm Leb}_p(|v_i|)\) for all \(i=1,\ldots,n\) and \(x\in{\rm Leb}(\langle L,\tilde v\rangle)\), by letting \(k\to\infty\) we deduce that
\[\begin{split}
|L_x(v)|&=\big|{\rm Rep}(\langle L,\tilde v\rangle)(x)\big|=\lim_{k\to\infty}\bigg|\fint_{E_k(x)}\langle L,\tilde v\rangle\,\d\mu_\X\bigg|\\
&\leq\lim_{k\to\infty}\bigg(\fint_{E_k(x)}|L|^q\,\d\mu_\X\bigg)^{1/q}\sum_{i=1}^n\bigg(\fint_{E_k(x)}|v_i|^p\,\d\mu_\X\bigg)^{1/p}\\
&={\rm Rep}_q(|L|)(x)\sum_{i=1}^n{\rm Rep}_p(|v_i|)(x),
\end{split}\]
where we set \(v\coloneqq\iota_x(\tilde v)\). Thanks to the arbitrariness of \((v_i)_{i=1}^n\), we can conclude that
\(|L_x(v)|\leq{\rm Rep}_q(|L|)(x)\|\tilde v\|_x={\rm Rep}_q(|L|)(x)\|v\|_x\), which shows that \(L_x\) is \({\rm Rep}_q(|L|)(x)\)-Lipschitz.

Hence, by using the Hahn--Banach Theorem we can find an element \(\bar\omega(x)\in(\hat{\mathscr M}_x)'\) such that
\(\bar\omega(x)|_{\mathcal V_x}=L_x\) and \(\|\bar\omega(x)\|_{(\hat{\mathscr M}_x)'}\leq{\rm Rep}_q(|L|)(x)\).
Moreover, for any \(x\in\X\setminus{\rm Leb}_q(|L|)\) we set \(\bar\omega(x)\coloneqq 0\in(\hat{\mathscr M}_x)'\). It is then easy to verify
that \(\bar\omega\in\mathscr S_{\sf weak}\big(\hat{\mathscr M}'_\star)\), that its equivalence class \(\omega\) w.r.t.\ $\sim_{\sf weak}$  
belongs to $L^q_{\sf weak}(\hat{\mathscr M}'_\star;\mu_\X)$, that \(|\omega|\leq|L|\) in the \(\mu_\X\)-a.e.\ sense, and
that \({\sf I}(\omega)=L\).
\end{proof}

\subsubsection{The dual of the pullback}
A combination of the arguments used in these last two paragraphs provide a characterization of the dual of $\varphi^*\mathscr M$ as space of sections of a suitable weak Banach bundle.

We shall make use of the following property of pullback modules, whose proof can be easily obtained with minor modifications of  \cite[Proposition 1.6.3]{Gigli14}:

\begin{proposition}\label{prop:univ_prop_pullback-pol} With the notation and assumptions as in Section \ref{se:mss}, the following holds.

Suppose that \(T\colon\mathscr M\to L^1(\mu_\Y)\) is a linear operator for which there exists a function \(g\in L^q(\mu_\Y)\) such that \(|T(v)|\leq g|v|\circ\varphi\)
holds for every \(v\in\mathscr M\). 

Then there exists a unique linear and continuous map \(\hat T\colon\varphi^*\mathscr M\to L^1(\mu_\Y)\) such that
\[
\hat T(\varphi^*v)=T(v),\quad\text{ for every }v\in\mathscr M.
\]
Moreover, it holds that \(|\hat T(V)|\leq g|V|\) for every \(V\in\varphi^*\mathscr M\).
\end{proposition}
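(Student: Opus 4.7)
The plan is to construct $\hat T$ on the dense subspace of $L^\infty(\mu_\Y)$-simple combinations of elements of the form $\varphi^*v$ in $\varphi^*\mathscr M$---which is dense by property (ii) of Theorem \ref{thm:defpb}---and then extend by continuity. The pointwise bound $|T(v)|\leq g|v|\circ\varphi$ together with the defining identity $|\varphi^*v|=|v|\circ\varphi$ are the two ingredients driving the argument.

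Concretely, for $V=\sum_{i=1}^n\1_{A_i}\varphi^* v_i$ with $\{A_1,\ldots,A_n\}$ a measurable partition of $\Y$ and $v_i\in\mathscr M$, I set $\hat T(V):=\sum_i\1_{A_i}T(v_i)$. Well-definedness is the first thing to verify: comparing two representations on a common refinement, the difference reduces to showing that whenever $\sum_i\1_{B_i}\varphi^*w_i=0$ in $\varphi^*\mathscr M$ one also has $\sum_i\1_{B_i}T(w_i)=0$ in $L^1(\mu_\Y)$; but
\[
\Big|\sum_i\1_{B_i}T(w_i)\Big|\leq g\sum_i\1_{B_i}|w_i|\circ\varphi=g\Big|\sum_i\1_{B_i}\varphi^*w_i\Big|=0\qquad\mu_\Y\text{-a.e.}
\]
The same estimate, applied for general $V$, gives the crucial bound $|\hat T(V)|\leq g|V|$ on simple combinations, whence by H\"{o}lder's inequality $\|\hat T(V)\|_{L^1(\mu_\Y)}\leq\|g\|_{L^q(\mu_\Y)}\|V\|_{\varphi^*\mathscr M}$, so $\hat T$ is bounded.

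By density I then extend $\hat T$ to a continuous $\R$-linear map $\varphi^*\mathscr M\to L^1(\mu_\Y)$, and the pointwise bound passes to the extension by approximation: choose simple $V_n\to V$ in norm and extract a subsequence along which both $|V_n|\to|V|$ and $\hat T(V_n)\to\hat T(V)$ converge $\mu_\Y$-a.e. As a by-product, the bound forces $\hat T$ to be a local operator: for any $A\in\Sigma_\Y$, splitting $V=\1_A V+\1_{\Y\setminus A}V$ and applying $|\hat T(\cdot)|\leq g|\cdot|$ to each piece shows that $\hat T(\1_A V)$ vanishes outside $A$, so $\hat T(\1_A V)=\1_A\hat T(V)$; thus $\hat T$ is automatically $L^\infty(\mu_\Y)$-linear. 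Uniqueness in the natural class of continuous extensions is then clear, since two such must agree on the $L^\infty(\mu_\Y)$-linear span of $\{\varphi^*v\}$ and hence everywhere by density.

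The main subtlety I foresee lies in the well-definedness of $\hat T$ on simple combinations, which is precisely where one has to exploit both $|\varphi^*v|=|v|\circ\varphi$ inside $\varphi^*\mathscr M$ and the hypothesis $|T(v)|\leq g|v|\circ\varphi$ in $L^1(\mu_\Y)$ in tandem. Once this is in place, the extension, the propagation of the pointwise bound, and the uniqueness argument are all routine density-and-continuity steps closely following \cite[Proposition 1.6.3]{Gigli14}.
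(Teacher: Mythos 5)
Your argument is correct and coincides with the paper's intended proof, which is simply deferred to \cite[Proposition 1.6.3]{Gigli14}: one defines $\hat T$ on simple $L^\infty(\mu_\Y)$-combinations of pullbacks, checks well-posedness and the pointwise bound using $|\varphi^*v|=|v|\circ\varphi$, extends by density, and recovers locality (hence $L^\infty(\mu_\Y)$-linearity) from the bound. One small clarification: the uniqueness your argument yields---and the one the paper actually uses afterwards---is uniqueness among $L^\infty(\mu_\Y)$-linear (equivalently, bound-satisfying) continuous extensions, which is the correct reading of the statement, since among merely $\R$-linear continuous extensions uniqueness can fail (for instance, when $\X$ is a single atom, an averaging operator on $\Y$ gives a second linear continuous extension).
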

By  ${\hat{\mathscr M}}_{\star}$ we denote the collection of Banach spaces over $\X$ defined in Section \ref{se:startm} and by  ${\hat{\mathscr M}}_{\star}'$ the collection of duals.

We are going to consider the weak Banach bundle on $(\Y,\Sigma_\Y,\mathcal N_{\mu_\Y})$ given by:
\begin{itemize}
\item[-] The collection ${\hat{\mathscr M}}'_{\varphi(\star)}$, i.e.\ the Banach space corresponding to $y\in\Y$ is $({\hat{\mathscr M}}_{\varphi(y)})'$.
\item[-] The set of test vectors $\{{\rm Rep}(v)\circ\varphi:v\in\mathscr M\}\subset\mathscr S({\hat{\mathscr M}}_{\varphi(\star)})$.
\end{itemize}
From \eqref{eq:perlin} it follows  that this is a weak Banach bundle in the sense of Definition \ref{def:wbb}.

We are now going to define a map
\[
\mathcal I:L^q_{\sf weak}(\hat{\mathscr M}'_{\varphi(\star)};\mu_\Y)\qquad\to\qquad(\varphi^*\mathscr M)^*
\]
as follows: for any  $\omega_\cdot\in L^q_{\sf weak}(\hat{\mathscr M}'_{\varphi(\star)};\mu_\Y)$ and chosen a $\sim_{\sf weak}$-representative $\bar\omega_\cdot$ of $\omega_\cdot$, the map 
\[
\mathscr M\ni v\mapsto [\langle\bar\omega_\cdot,{\rm Rep}(v)\circ\varphi \rangle]\in L^1(\mu_\Y)
\] 
(here $[f]$ is the $\mu_\Y$-a.e.\ equivalence class of the $\mu_\Y$-measurable function $f$) is easily seen to be well defined, independent of the chosen representative, linear and - by \eqref{eq:dualnorm} and \eqref{eq:stessanorma} - to satisfy
\[
\big|[\langle\bar\omega_\cdot,{\rm Rep}(v)\circ\varphi \rangle]\big|\leq |\omega_\cdot|\,|v|\circ\varphi\qquad\mu_\Y-a.e..
\]
Hence by Proposition \ref{prop:univ_prop_pullback-pol} there is a unique $L^\infty(\mu_\Y)$-linear and continuous map $\mathcal I(\omega_\cdot)$ from $\varphi^*\mathscr M$ to $L^1(\mu_\Y)$ (i.e.\ an element of $(\varphi^*\mathscr M)^*$) such that
\[
\mathcal I(\omega_\cdot)(\varphi^*v)=[\langle\bar\omega_\cdot,{\rm Rep}(v)\circ\varphi \rangle]\quad\mu_\Y-a.e.\ \qquad\forall v\in\mathscr M
\]
and such map satisfies
\begin{equation}
\label{eq:bpbtr}
|\mathcal I(\omega_\cdot)|\leq  |\omega_\cdot|\qquad\mu_\Y-a.e..
\end{equation}

We then have:
\begin{theorem}\label{thm:pbd1} With the  notation and assumptions as in Section \ref{se:mss} and above, the space $L^q_{\sf weak}(\hat{\mathscr M}'_{\varphi(\star)};\mu_\Y)$ is an $L^q(\mu_\Y)$-normed module and the map $\mathcal I$  is an isomorphism of $L^q(\mu_\Y)$-normed modules.
\end{theorem}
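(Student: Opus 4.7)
My plan is to replicate the structure of the proof of Theorem \ref{prop:chardual} within the pullback setting, invoking the universal property of the pullback (Proposition \ref{prop:univ_prop_pullback-pol}) to reduce matters to operators $T:\mathscr M\to L^1(\mu_\Y)$. Since \eqref{eq:bpbtr} already grants $L^\infty(\mu_\Y)$-linearity of $\mathcal I$ and $|\mathcal I(\omega)|\leq|\omega|$ $\mu_\Y$-a.e., only three tasks remain: (a) the reverse inequality $|\mathcal I(\omega)|\geq|\omega|$ (which in particular yields injectivity), (b) surjectivity of $\mathcal I$, and (c) completeness of $L^q_{\sf weak}(\hat{\mathscr M}'_{\varphi(\star)};\mu_\Y)$.

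For (a), under the identification $L^p_{\sf str}(\hat{\mathscr M}_{\varphi(\star)};\mu_\Y)\cong\varphi^*\mathscr M$ already established, the test sections $\{{\rm Rep}(v)\circ\varphi:v\in\mathscr M\}$ correspond to a generating subset $\{\varphi^*v\}$ of $\varphi^*\mathscr M$; hence the essential supremum defining $|\omega|$ via \eqref{eq:defnw} and that defining $|\mathcal I(\omega)|$ via Definition \ref{def:dualm} range over the same family of competitors (with $\|\cdot\|\leq 1$), so the two norms coincide. Claim (c) then comes for free once (b) is in place: $\mathcal I$ will be a bijective pointwise-isometry onto the Banach module $(\varphi^*\mathscr M)^*$, which transfers completeness to its domain.

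For (b), given $L\in(\varphi^*\mathscr M)^*$, I set $T(v):=L(\varphi^*v)\in L^1(\mu_\Y)$, which obeys $|T(v)|\leq|L|\cdot|v|\circ\varphi$. Following the Hahn--Banach scheme of Theorem \ref{prop:chardual}, for $y\in{\rm Leb}_q(|L|)$ I consider $\tilde{\mathcal V}_y:=\{\tilde v\in\tilde{\mathscr M}_{\varphi(y)}:y\in{\rm Leb}(T(\tilde v))\}$, $\mathcal V_y:=\iota_{\varphi(y)}(\tilde{\mathcal V}_y)$ and $L_y(\iota_{\varphi(y)}(\tilde v)):={\rm Rep}(T(\tilde v))(y)$. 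Applying H\"older and Minkowski on $L^p(\mu_\Y|_{E_k(y)})$ to $|T(\tilde v)|\leq|L|\cdot|\tilde v|\circ\varphi$ and sending $k\to\infty$ yields
\[
|L_y(v)|\leq{\rm Rep}_q(|L|)(y)\sum_i{\rm Rep}_p(|v_i|\circ\varphi)(y),\quad\text{whenever } \tilde v=\textstyle\sum_i v_i,\,v_i\in\mathscr M_{\varphi(y)},
\]
where the ${\rm Rep}_p$ on the right is now computed on $\Y$. Since both $y\mapsto{\rm Rep}_p(|v_i|\circ\varphi)(y)$ (computed on $\Y$) and $y\mapsto{\rm Rep}_p(|v_i|)(\varphi(y))$ (computed on $\X$ and pulled back) are measurable representatives of $|v_i|\circ\varphi\in L^p(\mu_\Y)$, on a $\mu_\Y$-full set they coincide; after taking the infimum over decompositions the estimate becomes $|L_y(v)|\leq{\rm Rep}_q(|L|)(y)\|v\|_{\hat{\mathscr M}_{\varphi(y)}}$. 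Hahn--Banach then provides $\bar\omega(y)\in\hat{\mathscr M}_{\varphi(y)}'$ of norm $\leq{\rm Rep}_q(|L|)(y)\in L^q(\mu_\Y)$, weak measurability of $\bar\omega$ follows from the measurability of $y\mapsto{\rm Rep}(T(v))(y)$ for each $v\in\mathscr M$, and $\mathcal I(\omega)=L$ holds by construction on $\{\varphi^*v\}$ combined with the uniqueness in Proposition \ref{prop:univ_prop_pullback-pol}. The main obstacle I foresee lies precisely in the $v_i$-dependence of the $\mu_\Y$-null exceptional set in the identity ${\rm Rep}_p(|v_i|\circ\varphi)={\rm Rep}_p(|v_i|)\circ\varphi$: since the infimum defining $\|\cdot\|_{\hat{\mathscr M}_{\varphi(y)}}$ ranges over uncountably many decompositions, the passage of this equality through the infimum on a single $\mu_\Y$-full set has to be argued by noting that infima of a.e.-equal families of representatives in $L^p(\mu_\Y)$ yield the same equivalence class.
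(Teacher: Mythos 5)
Your overall architecture is the same as the paper's: reduce everything to surjectivity plus the reverse pointwise bound via Proposition \ref{prop:univ_prop_pullback-pol}, then run the fibrewise Hahn--Banach construction of Theorem \ref{prop:chardual} with ${\rm Leb}$, ${\rm Rep}$, ${\rm Rep}_q$ computed on $\Y$. Your step (a) is acceptable (though ``the same family of competitors'' is loose: \eqref{eq:defnw} ranges over $\widetilde{\rm Test}$ while Definition \ref{def:dualm} ranges over all of $\varphi^*\mathscr M$, so one direction needs the truncation/generation argument; the paper instead obtains the reverse bound from the bound $|\omega|\leq|L|$ for the constructed preimage together with the evident injectivity of $\mathcal I$), and (c) is handled exactly as in the paper.

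The genuine problem is the obstacle you flag in (b), and your proposed fix does not close it. The Hahn--Banach extension must be performed at each \emph{fixed} $y\in{\rm Leb}_q(|L|)$, and for it you need, at that $y$, the inequality $|L_y(v)|\leq{\rm Rep}_q(|L|)(y)\,\|v\|_{\hat{\mathscr M}_{\varphi(y)}}$, where $\|v\|_{\hat{\mathscr M}_{\varphi(y)}}$ is an infimum over \emph{all} decompositions $\tilde v=\sum_i v_i$, $v_i\in\mathscr M_{\varphi(y)}$, of $\sum_i{\rm Rep}_p(|v_i|)(\varphi(y))$ computed with the $\X$-partitions. Your H\"older/Minkowski estimate on the sets $E_k(y)$ of the $\Y$-partitions only produces $\limsup_k P_k^{\Y}(|v_i|^p\circ\varphi)(y)^{1/p}$ on the right-hand side (the limit need not even exist at that $y$, since membership $v_i\in\mathscr M_{\varphi(y)}$ is an $\X$-side Lebesgue-point condition), and the identification of this quantity with ${\rm Rep}_p(|v_i|)(\varphi(y))$ holds only off a $\mu_\Y$-null set \emph{depending on $v_i$}. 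Since the decompositions at a given $y$ form an uncountable family that is not fixed in advance, these exceptional sets cannot be united, and the remark that ``infima of a.e.-equal families of representatives yield the same equivalence class'' only yields an a.e.\ statement about essential infima of measurably indexed families -- it does not deliver the pointwise-at-$y$ bound (nor even the well-posedness of $L_y$ on $\hat{\mathcal V}_y$, which already requires controlling $\|\tilde v_1-\tilde v_2\|_{\varphi(y)}$ through arbitrary decompositions). Note that the paper compresses precisely this step into ``arguing as in the proof of Proposition \ref{prop:chardual}''; in that proposition the difficulty is absent because the \emph{same} partitions define both the fibre norms and the representatives of $\langle L,v\rangle$, whereas in the pullback case the fibres live over $\X$ and the scalar representatives over $\Y$. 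So you have correctly isolated the one genuinely new difficulty of Theorem \ref{thm:pbd1}, but your proposal does not contain an argument that resolves it, and as written this step would fail.
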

\begin{proof}
We know that $\mathcal I$ is linear and satisfies \eqref{eq:bpbtr}. Since $L^\infty(\mu_\Y)$-linearity is trivial, to conclude it is sufficient to check surjectivity and the bound $|\mathcal I(\omega_\cdot)|\geq  |\omega_\cdot|$ (as in Theorem \ref{prop:chardual} this also gives the completeness of $L^q_{\sf weak}(\hat{\mathscr M}'_{\varphi(\star)};\mu_\Y)$ and thus the fact that it is an $L^q(\mu_\Y)$-normed module).

To this aim, pick a dense sequence of sets in $(\Y,\Sigma_\Y,\mu_\Y)$ and define the corresponding objects  ${\rm Leb}$, ${\rm Rep}$, ${\rm Leb}_q$, ${\rm Rep}_q$ as in Theorem \ref{thm:partition_Doob}  and Definition \ref{def:lebp}. Now  let \(L\in(\varphi^*\mathscr M)^*\)
be fixed. Given any \(y\in{\rm Leb}_q(|L|)\), we define \(\tilde{\mathcal V}_y\subset\tilde{\mathscr M}_{\varphi(y)}\) as
\[
\tilde{\mathcal V}_y\coloneqq\big\{v\in\tilde{\mathscr M}_{\varphi(y)}\;\big|\;y\in{\rm Leb}(\langle L,\varphi^*v\rangle)\big\}
\]
and \(\hat{\mathcal V}_y\coloneqq\iota_{\varphi(y)}(\tilde{\mathcal V}_y)\subset\hat{\mathscr M}_{\varphi(y)} \). Notice that \(\hat{\mathcal V}_y\) is a vector space by Proposition \ref{prop:properties_Leb} ii).

We define the function \(L_y\colon\hat {\mathcal V}_y\to\R\) as
\[
L_y(v)\coloneqq{\rm Rep}(\langle L,\varphi^*\tilde v\rangle)(y),\quad\text{ for every }v\in\hat{\mathcal V}_y
\text{ and }\tilde v\in\tilde{\mathcal V}_y\text{ with }v=\iota_{\varphi(y)}(\tilde v).
\]
Arguing as in the proof of Proposition \ref{prop:chardual}, one can show that the function \(L_y\) is well-defined, linear, continuous,
and satisfying \(\|L_y\|_{(\hat{\mathcal V}_y)'}\leq{\rm Rep}_q(|L|)(y)\). Hence, using the Hahn--Banach Theorem we obtain an element
\(\bar\omega(y)\in(\hat{\mathscr M}_{\varphi(y)})'\) with \(\bar\omega(y)|_{\hat{\mathcal V}_y}=L_y\) and \(\|\bar\omega(y)\|_{(\hat{\mathscr M}_{\varphi(y)})'}
\leq{\rm Rep}_q(|L|)(y)\). Moreover, for any \(y\in\Y\setminus{\rm Leb}_q(|L|)\) we set \(\bar\omega(y)\coloneqq 0\in(\hat{\mathscr M}_{\varphi(y)})'\).

By construction, for any $v\in\mathscr M$ we have \(v\in\hat{\mathcal V}_y\) for \(\mu_\Y\)-a.e.\ \(y\in\Y\), hence the identity
\begin{equation}
\label{eq:idfin}
\langle \bar \omega(y),{\rm Rep}(v)(\varphi(y))\rangle=\langle \bar \omega(y),\iota_{\varphi(y)}(v)\rangle=\langle L_y,\iota_{\varphi(y)}(v)\rangle={\rm Rep}(\langle L,\varphi^* v\rangle)(y)
\end{equation}
is valid for  \(\mu_\Y\)-a.e.\ \(y\in\Y\) and shows that $\bar\omega\in\mathscr S(\hat{\mathscr M}'_{\varphi(\star)})$ is weakly measurable. Letting $\omega$ be the $\sim_{\sf weak}$-equivalence class of $\bar \omega$, the above identity and the definition \eqref{eq:defnw} show that   \(|\omega|\leq|L|\)
in the \(\mu_\Y\)-a.e.\ sense, so that \(|\omega|\in L^q(\mu_\X)\) and thus
\(\omega\in L^q_{\sf weak}(\hat{\mathscr M}_{\varphi(\star)}';\mu_\Y)\). 

Since \eqref{eq:idfin} also shows that $\mathcal I(\omega)=L$, the proof is complete.
\end{proof}
\begin{remark}{\rm
Both in the proof of Theorem \ref{prop:chardual} and of Theorem \ref{thm:pbd1}, the Hahn--Banach Theorem is used. This is due more to our chosen axiomatization of weak Banach bundle than to an actual need (we chose the proposed definition just for simplicity of exposition): a fully satisfactory alternative definition of weak Banach bundle requires the typical section $w$ not to be so that $w(x)$ is a linear continuous functional from $V_x$ to $\R$ but rather that $w(x)$ is a linear continuous functional defined \emph{on some subspace of $V_x$}. These subspaces must be sufficiently big so that \emph{for every test vector $v$ we should have $v(x)$ in the domain of $w(x)$ for every $x\in\X\setminus N$, for some $N\in\mathcal N$} (plus the necessary measurability requirements).

It is immediate to verify that  with this alternative definition the proofs of Theorems \ref{prop:chardual}, \ref{thm:pbd1} work without modifications and without the requirement of any Choice other than Countable Dependent.
\fr}\end{remark}

\section{The general case}\label{se:gencase}

\subsection{Linear choice of representatives}

When dealing with possibly non-separable $\sigma$-finite measure spaces, Theorem \ref{thm:partition_Doob} is not anymore available and to select measurable representatives we have to use  the lifting theory of von Neumann.

Let us recall some terminology and results, referring   to \cite{Fremlin3} and the references therein for a thorough account of this topic.
\begin{definition}[Lifting]
Let \((\X,\Sigma_\X,\mu_\X)\) be a $\sigma$-finite  measure space and $\bar\Sigma_\X$ the completion of $\Sigma_\X$ obtained adding $\mu_\X$-measurable sets. Let \(\Xi\) be a \(\sigma\)-subalgebra of \(\Sigma_\X\).
Then a mapping \(\phi\colon\Xi\to\bar\Sigma_\X\) is said to be a \emph{pre-lifting} of \(\mu_\X\) provided the following hold:
\[\begin{split}
\phi(\varnothing)=\varnothing&,\\
\phi(\X)=\X&,\\
\phi(E\cup F)=\phi(E)\cup\phi(F)&,\quad\text{ for every }E,F\in\Xi,\\
\phi(E\cap F)=\phi(E)\cap\phi(F)&,\quad\text{ for every }E,F\in\Xi,\\
\phi(E)=\phi(F)&,\quad\text{ for every }E,F\in\Xi\text{ with }\mu_\X(E\Delta F)=0,\\
\mu_\X\big(E\Delta\phi(E)\big)=0&,\quad\text{ for every }E\in\Xi.
\end{split}\]
If \(\Xi=\Sigma_\X\), then we say that \(\phi\) is a \emph{lifting} of \(\mu_\X\). Liftings will be typically denoted by \(\ell\).
\end{definition}
\begin{theorem}[Extension of a pre-lifting]\label{thm:ext_pre-lift} 
Let \((\X,\Sigma_\X,\mu_\X)\) be a $\sigma$-finite  measure space and $\bar\Sigma_\X$ the completion of $\Sigma_\X$ obtained adding $\mu_\X$-measurable sets. Let \(\Xi\) be a \(\sigma\)-subalgebra of \(\Sigma_\X\) and \(\phi\colon\Xi\to\bar\Sigma_\X\) a pre-lifting of \(\mu_\X\). 

Then there exists a lifting \(\ell\) of \(\mu_\X\) such that \(\ell|_\Xi=\phi\).
\end{theorem}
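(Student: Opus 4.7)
The plan is a Zornification on the family of extensions. Let $\mathcal{F}$ be the set of pairs $(\Xi',\phi')$ with $\Xi\subseteq\Xi'\subseteq\Sigma_\X$ a sub-$\sigma$-algebra and $\phi'\colon\Xi'\to\bar\Sigma_\X$ a pre-lifting of $\mu_\X$ restricting to $\phi$ on $\Xi$, partially ordered by extension. This family is non-empty (it contains $(\Xi,\phi)$), and every chain $\{(\Xi_\alpha,\phi_\alpha)\}$ admits an upper bound: the $\phi_\alpha$ agree on overlaps and glue to a common pre-lifting on the algebra $\bigcup_\alpha\Xi_\alpha$, which is then extended coherently to the generated $\sigma$-algebra by a transfinite monotone-class argument exploiting the stability of the six pre-lifting axioms under countable Boolean operations (up to $\mu_\X$-null corrections). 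Zorn's Lemma supplies a maximal element $(\Xi_0,\phi_0)\in\mathcal{F}$.

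The target becomes to show $\Xi_0=\Sigma_\X$. Assume to the contrary that some $A\in\Sigma_\X\setminus\Xi_0$ exists. Every element of $\sigma(\Xi_0\cup\{A\})$ has the canonical form $(E_1\cap A)\cup(E_2\setminus A)$ for suitable $E_1,E_2\in\Xi_0$, so an extension of $\phi_0$ to this larger $\sigma$-algebra is determined once a set $A^*\in\bar\Sigma_\X$ is chosen for $\phi_0(A)$, via
\[
\phi_0\bigl((E_1\cap A)\cup(E_2\setminus A)\bigr):=\bigl(\phi_0(E_1)\cap A^*\bigr)\cup\bigl(\phi_0(E_2)\setminus A^*\bigr).
\]
Well-posedness of this formula (independence of the chosen representation) together with the six pre-lifting axioms reduce to finding $A^*$ satisfying $\mu_\X(A\Delta A^*)=0$ and the Boolean compatibility $\phi_0(E)\cap A^*=\phi_0(E\cap A)$ whenever $E\cap A\in\Xi_0$, $E\in\Xi_0$, along with the symmetric relation for the complement. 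Producing such an $A^*$ would yield a strict extension of $(\Xi_0,\phi_0)$, contradicting maximality.

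The main obstacle is the construction of $A^*$, which is the genuine content of the lifting theorem of von Neumann--Maharam and cannot be side-stepped. The strategy is to introduce the conditional expectation $f:=\mathbb{E}[\1_A\mid\Xi_0]\in L^\infty(\mu_\X|_{\Xi_0})$ and, using $\phi_0$, to lift the dyadic level sets $\{f>k/2^n\}$ into a coherent nested family of honest subsets of $\X$; this yields a $\Xi_0$-measurable pointwise representative $\hat f\colon\X\to[0,1]$ of $f$. A suitable candidate is $A^*:=\{\hat f=1\}$, corrected on a $\mu_\X$-null set to enforce Boolean compatibility with $\phi_0$. The delicate point is to verify $\mu_\X(A\Delta A^*)=0$; this rests on an abstract differentiation statement encoded in the existence of $\phi_0$, namely that the pointwise representative $\hat f$ agrees almost everywhere with $\1_A$ itself. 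Once $A^*$ is in hand, the verification of the pre-lifting axioms for the extended $\phi_0$ is a routine Boolean computation, and the contradiction with maximality completes the proof.
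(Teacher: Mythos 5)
The paper does not prove this statement (it is quoted from Fremlin's treatise), so your attempt has to be measured against the standard lifting-theory argument, and as written it has two genuine gaps. The first is the upper-bound step in the Zorn argument. If a chain $\{(\Xi_\alpha,\phi_\alpha)\}$ has countable cofinality, then $\bigcup_\alpha\Xi_\alpha$ is only an algebra, not a $\sigma$-algebra, and your proposed ``transfinite monotone-class argument exploiting the stability of the six pre-lifting axioms under countable Boolean operations'' does not exist: a pre-lifting is only a finitely additive Boolean homomorphism, and it never commutes with countable unions in nonatomic situations (for $E_n\uparrow E$ one only gets $\bigcup_n\phi(E_n)\subset\phi(E)$, typically strictly), so monotone-class reasoning gives you nothing. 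Extending the lifting across a countable-cofinality limit is precisely the hard core of the von Neumann--Maharam theorem; in the standard proof it is handled by Doob's martingale convergence theorem, which produces only a \emph{lower density} on the limit $\sigma$-algebra, followed by a separate (nontrivial, choice-dependent) argument upgrading a lower density to a lifting. None of this is present, and it cannot be waved away, since it is exactly the content you would be re-proving.

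The second gap is in the one-step extension: your candidate $A^*$ is wrong. With $f:=\mathbb{E}[\1_A\,|\,\Xi_0]$ and $\hat f$ any pointwise representative, the set $\{\hat f=1\}$ agrees up to $\mu_\X$-null sets with the $\Xi_0$-measurable essential kernel $H_1$ of $A$ (the largest $\Xi_0$-set a.e.\ contained in $A$). Hence $\mu_\X(A\Delta A^*)\geq\mu_\X(A\setminus H_1)=\int_{\{f<1\}}f\,\d\mu_\X>0$ whenever $A$ is not a.e.\ equal to a member of $\Xi_0$ --- which is exactly the case you are in, since $A\notin\Xi_0$ was chosen to contradict maximality. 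No null-set correction can repair this, and the ``abstract differentiation statement'' you invoke (that $\hat f=\1_A$ a.e.) is false: conditional expectation onto a strictly smaller $\sigma$-algebra genuinely loses information. The classical one-step extension avoids conditional expectations altogether: letting $H_1,H_2\in\Xi_0$ be essential kernels of $A$ and of $\X\setminus A$ respectively, one takes $A^*:=\phi_0(H_1)\cup\big(A\setminus\phi_0(H_2)\big)$; then $\mu_\X(A\Delta A^*)=0$, $\phi_0(E)\subset A^*$ whenever $\mu_\X(E\setminus A)=0$, and $\phi_0(E)\cap A^*=\varnothing$ whenever $\mu_\X(E\cap A)=0$, which is exactly what well-posedness of the formula $\phi_0\big((E_1\cap A)\cup(E_2\setminus A)\big):=\big(\phi_0(E_1)\cap A^*\big)\cup\big(\phi_0(E_2)\setminus A^*\big)$ requires. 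With that fix the successor step is fine, but the limit step above remains unproven, so the proposal does not establish the theorem.
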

By choosing \(\Xi\coloneqq\{\varnothing,\X\}\) and $\Sigma_{\X}=\bar\Sigma_\X$ in Theorem \ref{thm:ext_pre-lift}, one recovers the celebrated \emph{von Neumann--Maharam Theorem},
stating that each (complete, $\sigma$-finite) measure space admits a lifting. More precisely, this theorem is usually stated for complete and finite measure spaces, but the $\sigma$-finite case can easily be obtained by a patching argument.
 
\medskip

Each lifting induces a lifting at the level of bounded measurable functions: given a lifting \(\ell\) of \(\mu_\X\),
there exists a unique linear and continuous operator \(\ell\colon L^\infty(\mu_\X)\to\mathcal L^\infty(\bar\Sigma_\X)\) such that
\[
\ell([\1_E])=\1_{\ell(E)},\quad\text{ for every }E\in\Sigma_\X,
\]
where $[f]$ is the equivalence class in $L^\infty(\mu_\X)$ of $f\in \mathcal L^\infty(\Sigma_\X)$.  Moreover, it is straightforward to check that the following properties are verified:
\[\begin{split}
\|\ell(f)\|_{\mathcal L^\infty(\bar\Sigma_\X)}=\|f\|_{L^\infty(\mu_\X)}&,\quad\text{ for every }f\in L^\infty(\mu_\X),\\
\pi_{\mu_\X}(\ell(f))=f&,\quad\text{ for every }f\in L^\infty(\mu_\X),\\
\ell(fg)=\ell(f)\ell(g)&,\quad\text{ for every }f,g\in L^\infty(\mu_\X),\\
|\ell(f)|=\ell(|f|)&,\quad\text{ for every }f\in L^\infty(\mu_\X),\\
\ell(f)\leq\ell(g)&,\quad\text{ for every }f,g\in L^\infty(\mu_\X)\text{ with }f\leq g\text{ }\mu_\X\text{-a.e..}
\end{split}\]

\subsection{Preliminaries on module theory}

A feature  of the theory of liftings is that it `only' works on the space $L^\infty$, meaning that it has no counterpart for $L^p$ spaces for $p<\infty$. This forces us to work with $L^\infty$-normed $L^\infty$-modules and to tailor  a bit the concept of duality to ensure that the dual of an  $L^\infty$-normed module is still $L^\infty$-normed (rather than $L^1$-normed as it would be from Definition \ref{def:dualm}). This tweaking of the theory is mostly unharmful, as we are going to discuss in Remark \ref{rmk:consist_L_p}.

One thing that we gain from focussing on $L^\infty$-normed modules is that there is  a theory that treat on equal grounds both the modules we are originally interested in and their liftings: it is the theory of $L^\infty(\Sigma_\X,\mathcal N)$-normed modules that has been introduced in  \cite{DMLP21} as a variant of the analogous notion appeared in \cite{Gigli14} and that we are now going to recall.

\medskip

Let us fix an enhanced measurable space $(\X,\Sigma_\X,\mathcal N)$ (i.e.\ $\mathcal N\subset\Sigma_\X$ is  a $\sigma$-ideal). Denote by $\mathcal L^\infty(\Sigma_\X)$ the vector space of real valued and bounded measurable functions. The $\sigma$-ideal $\mathcal N$  induces an equivalence relation
\(\sim_{\mathcal N}\) on \(\mathcal L^\infty(\Sigma_\X)\): given any \(f,g\in\mathcal L^\infty(\Sigma_\X)\), we declare that \(f\sim_{\mathcal N}g\) if
\[
\big\{x\in\X\;\big|\;f(x)\neq g(x)\big\}\in\mathcal N.
\]
We call \(L^\infty(\Sigma_\X,\mathcal N)\) the quotient space \(\mathcal L^\infty(\Sigma_\X)/\sim_{\mathcal N}\), and \(\pi_{\mathcal N}\colon\mathcal L^\infty(\Sigma_\X)\to L^\infty(\Sigma_\X,\mathcal N)\) 
the projection map. The linear space \(L^\infty(\Sigma_\X,\mathcal N)\)  is a Banach space if endowed with the norm
\[
\|f\|_{L^\infty(\Sigma_\X,\mathcal N)}\coloneqq\inf_{N\in\mathcal N}\sup_{\X\setminus N}|\bar f|,\quad\text{ for every }f=\pi_{\mathcal N}(\bar f)\in L^\infty(\Sigma_\X,\mathcal N).
\]
Note that \(L^\infty(\Sigma_\X,\mathcal N)\) is a commutative ring with respect to the usual pointwise operations and that the natural partial ordering in $\mathcal L^\infty(\Sigma_\X)$ passes to the quotient.  
We introduce the shorthand notation
\[
\1_E^{\mathcal N}\coloneqq\pi_{\mathcal N}(\1_E)\in L^\infty(\Sigma_\X,\mathcal N),\quad\text{ for every }E\in\Sigma_\X.
\]
The choice $\mathcal N=\{\varnothing\}$ is possible: in this case we have \(\mathcal L^\infty(\Sigma_\X)=L^\infty(\Sigma_\X,\{\varnothing\})\).

On the other hand, any given measure \(\mu\geq 0\)
on \((\X,\Sigma_\X)\) is associated with the \(\sigma\)-ideal \(\mathcal N_\mu\) of its negligible sets, defined as $\mathcal N_\mu\coloneqq\big\{N\in\Sigma_\X\;\big|\;\mu(N)=0\big\}$.
Then \(L^\infty(\mu)=L^\infty(\Sigma_\X,\mathcal N_\mu)\) as Banach spaces. For brevity, for any $E\in\Sigma_\X$ we shall write
$\1_E^\mu\in L^\infty(\mu)$ instead of $\1_E^{\mathcal N_\mu}$.

We come to the related concept of module.
\begin{definition}[\(L^\infty(\Sigma_\X,\mathcal N)\)-normed module]\label{def:normed_mod}
Let \((\X,\Sigma_\X,\mathcal N)\) be an enhanced measurable space. An \(L^\infty(\Sigma_\X,\mathcal N)\)-normed module is a Banach space  \((\mathscr M,\|\cdot\|_{\mathscr M})\) that is also a module over the ring \(L^\infty(\Sigma_\X,\mathcal N)\) and possesses a map \(|\cdot|\colon\mathscr M\to L^\infty(\Sigma_\X,\mathcal N)\), called \emph{pointwise norm}, with the following properties:
\begin{itemize}
\item[i)] Given any \(v,w\in\mathscr M\) and \(f\in L^\infty(\Sigma_\X,\mathcal N)\), it holds that
\[\begin{split}
|v|&\geq 0,\quad\text{ with equality if and only if }v=0,\\
|v+w|&\leq|v|+|w|,\\
|f\cdot v|&=|f||v|.
\end{split}\]
\item[ii)] Given any partition \((E_n)_{n\in\N}\subset\Sigma_\X\) of $\X$  and \((v_n)_{n\in\N}\subset\mathscr M\) with
\[
\sup_{n\in\N}\big\|\1_{E_n}^{\mathcal N}|v_n|\big\|_{L^\infty(\Sigma_\X,\mathcal N)}<+\infty,
\]there exists an element \(v\in\mathscr M\) such that $\1_{E_n}^{\mathcal N}\cdot v=\1_{E_n}^{\mathcal N}\cdot v_n$, for every $n\in\N$.
\item[iii)] The norm $\|\cdot\|_{\mathscr M}$ can be recovered via the formula
\[
\|v\|_{\mathscr M}\coloneqq\big\||v|\big\|_{L^\infty(\Sigma_\X,\mathcal N)},\quad\text{ for every }v\in\mathscr M.
\]
\end{itemize}
\end{definition}

The property stated in item ii) is called the \emph{glueing property}. The element \(v\) appearing therein is uniquely determined, as one
can readily check, and thus  can be unambiguously denoted by \(\sum_{n\in\N}\1_{E_n}^{\mathcal N}\cdot v_n\in\mathscr M\).
We point out that \(\sum_{n\in\N}\1_{E_n}^{\mathcal N}\cdot v_n\) is only a formal expression, since it is not necessarily the
\(\|\cdot\|_{\mathscr M}\)-limit of \(\sum_{n=1}^N\1_{E_n}^{\mathcal N}\cdot v_n\) as \(N\to\infty\).
\medskip

It will be convenient to denote by \(\mathscr G(\mathcal V)\subset\mathscr M\) the linear space of all those elements of \(\mathscr M\)
that can be obtained by glueing together elements of a given linear space \(\mathcal V\subset\mathscr M\). Namely,
\[
\mathscr G(\mathcal V)\coloneqq\bigg\{\sum_{n\in\N}\1_{E_n}^{\mathcal N}\cdot v_n\;\bigg|\;(E_n)_{n\in\N}\subset\Sigma_\X\text{ partition of }\X,\,
(v_n)_{n\in\N}\subset\mathcal V,\,\sup_{n\in\N}\|\1_{E_n}^{\mathcal N}\cdot v_n\|_{\mathscr M}<\infty\bigg\}.
\]
If $\mathscr G(\mathcal V)$ is dense in $\mathscr M$ we say that $\mathcal V$ \emph{generates} $\mathscr M$.

The theory developed in this part of the work is based on the following result,  proved in  \cite[Theorem 3.5]{DMLP21}:
\begin{theorem}[Lifting of modules]\label{thm:lift_mod}
Let \((\X,\Sigma_\X,\mu_\X)\) be a $\sigma$-finite measure space, \(\mathscr M\) be an \(L^\infty(\mu_\X)\)-normed module, $\bar\Sigma_\X$ the completion of $\Sigma_\X$ and \(\ell:\Sigma_\X\to\bar\Sigma_\X\) a lifting of \(\mu_\X\).

Then there exists a unique couple \((\ell\mathscr M,\ell)\), where \(\ell\mathscr M\) is an \(\mathcal L^\infty(\bar\Sigma_\X)\)-normed module
and \(\ell\colon\mathscr M\to\ell\mathscr M\) is a linear operator, such that the following hold:
\begin{itemize}
\item[\(\rm i)\)] The identity \(|\ell(v)|=\ell(|v|)\) holds for every \(v\in\mathscr M\).
\item[\(\rm ii)\)] The linear space \(\{\ell(v)\,:\,v\in\mathscr M\}\) generates  \(\ell\mathscr M\).
\end{itemize}
Uniqueness is up to unique isomorphism: given any \((\mathscr N,T)\) with the same properties, there exists a unique
\(\mathcal L^\infty(\bar\Sigma_\X)\)-normed module isomorphism \(\Phi\colon\ell\mathscr M\to\mathscr N\) such that \(T=\Phi\circ\ell\).
\end{theorem}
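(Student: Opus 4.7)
The plan is to handle uniqueness first by a universal-property argument, and then to construct $(\ell\mathscr M,\ell)$ explicitly through formal $\mathcal L^\infty(\bar\Sigma_\X)$-glueings of elements of $\mathscr M$. For uniqueness, given another candidate $(\mathscr N,T)$ I would set $\Phi(\ell(v))\coloneqq T(v)$ on the generating set $\{\ell(v):v\in\mathscr M\}$. The identity $|\ell(v)-\ell(w)|=\ell(|v-w|)$ combined with $\pi_{\mu_\X}\circ\ell={\rm id}_{L^\infty(\mu_\X)}$ forces $\ell$ to be injective, so $\Phi$ is well defined on its domain, and the same identity gives $|\Phi(\xi)|=|\xi|$ there. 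I would then extend $\Phi$ to finite $\mathcal L^\infty(\bar\Sigma_\X)$-combinations $\sum_i\1_{A_i}\cdot\ell(v_i)$ by forcing $\mathcal L^\infty(\bar\Sigma_\X)$-linearity, and to countable glueings $\sum_n\1_{E_n}\cdot\ell(v_n)$ via the glueing property of $\mathscr N$, with norm preservation propagating at each step from the pointwise-norm axiom $|f\cdot\xi|=|f||\xi|$. Since $\{\ell(v):v\in\mathscr M\}$ generates $\ell\mathscr M$, this determines $\Phi$ uniquely on a dense subset, and a continuity argument concludes.

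For existence, I would take $\mathscr F$ to be the collection of formal glueings $\boldsymbol v=((A_n)_n,(v_n)_n)$, where $(A_n)\subset\bar\Sigma_\X$ is a countable partition of $\X$, $(v_n)\subset\mathscr M$, and $\sup_n\|\1_{A_n}^{\mu_\X}\cdot v_n\|_{\mathscr M}<\infty$ (here $\1_{A_n}^{\mu_\X}\in L^\infty(\mu_\X)$ is the $\mu_\X$-class of the characteristic function of $A_n$, unambiguous since $A_n\in\bar\Sigma_\X$). Assign to $\boldsymbol v$ the pointwise norm $|\boldsymbol v|\coloneqq\sum_n\1_{A_n}\,\ell(|v_n|)\in\mathcal L^\infty(\bar\Sigma_\X)$, where $\ell(|v_n|)$ is the functional lift discussed in the excerpt. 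Define addition of two elements of $\mathscr F$ by passing to their common refinement and adding fibre by fibre in $\mathscr M$, and define multiplication by an $\mathcal L^\infty(\bar\Sigma_\X)$-simple function analogously. Declare $\boldsymbol v\sim\boldsymbol w$ iff $|\boldsymbol v-\boldsymbol w|=0$ in $\mathcal L^\infty(\bar\Sigma_\X)$, and set $\ell\mathscr M\coloneqq\mathscr F/\sim$, equipped with $\|\xi\|_{\ell\mathscr M}\coloneqq\||\xi|\|_{\mathcal L^\infty(\bar\Sigma_\X)}$. Finally let $\ell:\mathscr M\to\ell\mathscr M$ send $v$ to $[((\X),(v))]$: property i) is then immediate, and ii) holds by construction since every $[\boldsymbol v]$ literally equals the glueing $\sum_n\1_{A_n}\cdot\ell(v_n)$.

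The main technical labour lies in verifying that $\ell\mathscr M$ is indeed an $\mathcal L^\infty(\bar\Sigma_\X)$-normed module in the sense of Definition \ref{def:normed_mod}. The glueing property requires, for any $\bar\Sigma_\X$-partition $(F_m)$ of $\X$ and any uniformly bounded family $(\xi_m)\subset\ell\mathscr M$ with representatives $\{(A_{m,n},v_{m,n})\}_n$, assembling the diagonal family $\{(F_m\cap A_{m,n},v_{m,n})\}_{m,n}$ and checking it is the desired glueing; completeness under $\|\cdot\|_{\ell\mathscr M}$ follows by reducing a Cauchy sequence to a uniformly bounded family on a countable partition, lifting to $\mathscr M$ via the glueing property there, and reassembling. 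The remaining obstacle, which I expect to be the hardest, is extending scalar multiplication from simple functions to general $f\in\mathcal L^\infty(\bar\Sigma_\X)$ and checking the full identity $|f\cdot\xi|=|f||\xi|$: I would approximate $f$ uniformly by simple functions and exploit the just established completeness, with compatibility reducing via density to the case of simple $f$ and $\xi=\ell(v)$, where it follows from the ring-homomorphism properties of the functional lift recalled after Theorem \ref{thm:ext_pre-lift}.
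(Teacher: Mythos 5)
Two preliminary remarks: the paper itself does not prove this statement (it is imported from \cite[Theorem 3.5]{DMLP21}), so your proposal can only be judged on its own terms; and your uniqueness half is essentially fine -- well-definedness and isometry on glueings via common refinement and the identity $|\,\cdot\,|$ of property i), then extension by density -- modulo the small point that you should also record why $\Phi$ is onto $\mathscr N$ (its image is complete, hence closed, and contains the glueings of $\{T(v)\}$, which are dense because $(\mathscr N,T)$ satisfies ii)).

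The existence half, however, has a genuine gap, in two layers. First, your admissibility condition $\sup_n\|\1_{A_n}^{\mu_\X}\cdot v_n\|_{\mathscr M}<\infty$ is the wrong one: it controls $\ell(|v_n|)$ only on the lifted set $\ell(A_n)$, not on $A_n$ itself, so the assigned pointwise norm $|\boldsymbol v|=\sum_n\1_{A_n}\,\ell(|v_n|)$ need not be bounded. For instance, with $\mathscr M=L^\infty(\mu_\X)$ take disjoint sets $B_n$ of positive measure, points $x_n\in\ell(B_n)$ with $\{x_n\}$ null and measurable, the partition $A_n=\{x_n\}$ (plus the remainder), and $v_n=n\1_{B_n}$: then $\|\1_{A_n}^{\mu_\X}\cdot v_n\|_{\mathscr M}=0$ for all $n$, yet $|\boldsymbol v|(x_n)=n$. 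The condition must be $\sup_n\sup_{A_n}\ell(|v_n|)<\infty$. Second, and more seriously, even after this correction the quotient $\mathscr F/\sim$ is \emph{not} complete, so as defined it is not an $\mathcal L^\infty(\bar\Sigma_\X)$-normed module, and your completeness argument cannot be repaired: countably many countable $\bar\Sigma_\X$-partitions have no common countable refinement, so a Cauchy sequence of glueings cannot be ``reduced to a uniformly bounded family on a countable partition''. In fact incompleteness is genuine. Take $\mathscr M=L^\infty(\mu_\X)$ with $\mu_\X$ Lebesgue on $[0,1]$; then $\mathscr F/\sim$ identifies with the set $G\subset\mathcal L^\infty(\bar\Sigma_\X)$ of functions which, on each piece of some countable measurable partition, coincide with a lifted function $\ell(g)$. $G$ contains every countably-valued measurable function (glue lifted constants), hence is dense in $\mathcal L^\infty(\bar\Sigma_\X)$ for the sup norm. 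But fix a null Cantor set $C$ (all of whose subsets lie in $\bar\Sigma_\X$), enumerate the continuum-many countable families $(g_n)_n\subset L^\infty(\mu_\X)$, assign to each an own point $x_\alpha\in C$, and choose $f(x_\alpha)\in[0,1]\setminus\{\ell(g_n^\alpha)(x_\alpha):n\in\N\}$, $f\equiv0$ elsewhere: this $f$ is bounded and $\bar\Sigma_\X$-measurable but belongs to no countable patchwork of lifted functions, i.e.\ $f\in\overline G\setminus G$. So the correct construction must take $\ell\mathscr M$ to be the \emph{completion} of the glueing space (equivalently, the closure of the glueings in a suitable ambient complete object), and one must then extend the structure to the completion: the pointwise norm extends because $\xi\mapsto|\xi|$ is $1$-Lipschitz into $\mathcal L^\infty(\bar\Sigma_\X)$, multiplication extends by density, and the glueing axiom for the completion requires the diagonal argument (a glueing of limits of glueings is a limit of glueings). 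These extensions are routine but they are precisely the steps your plan skips by asserting completeness of $\mathscr F/\sim$; property ii) then holds because the glueings of $\{\ell(v)\}$ are dense by construction, which is exactly what ``generates'' means.
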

Notice that with the notation of this last theorem we have
\begin{equation}
\label{eq:prodlift}
\ell(fv)=\ell(f)\,\ell(v)\qquad\forall f\in L^\infty(\mu_\X), \ v\in\mathscr M.
\end{equation}
To see this, by linearity and continuity it is sufficient to check that $\ell(\1_E^{\mu_\X}v)=\ell(\1_E^{\mu_\X})\,\ell(v)$ holds for any $E\in\Sigma_\X$ and $v\in\mathscr M$. In turn to prove this we notice that $\ell(\1_E^{\mu_\X})=\1_{\ell(E)}$ and that $\1_{\ell(E)}+\1_{\ell(\X\setminus E)}\equiv 1$ (because liftings preserve unions and intersections and send $\varnothing, \X$ to $\varnothing, \X$, respectively). Then observe that
\[
|\ell(\1_{\X\setminus E}^{\mu_\X})\ell(\1_E^{\mu_\X}v)|=|\ell(\1_{\X\setminus E}^{\mu_\X})|\,|\ell(\1_E^{\mu_\X}v)|=\ell(\1_{\X\setminus E}^{\mu_\X})\,\ell(|\1_E^{\mu_\X}v|)=\ell(\1_{\X\setminus E}^{\mu_\X}\,|\1_E^{\mu_\X}v|)=\ell(0)\equiv 0,
\]
so that $\ell(\1_{\X\setminus E}^{\mu_\X})\ell(\1_E^{\mu_\X}v)=0$  in $\ell\mathscr M$. Since the same holds inverting the roles of $E,\X\setminus E$ we deduce that
\[
\ell(v)=\big(\ell(\1_E^{\mu_\X})+\ell(\1_{\X\setminus E}^{\mu_\X})\big)\big(\ell(\1_E^{\mu_\X}v)+\ell(1_{\X\setminus E}^{\mu_\X}v)\big)=
\ell(\1_E^{\mu_\X})\ell(\1_E^{\mu_\X}v)+\ell(\1_{\X\setminus E}^{\mu_\X})\ell(1_{\X\setminus E}^{\mu_\X}v)
\]
and multiplying this identity first by $\ell(\1_E^{\mu_\X})$ and then by $\ell(\1_{\X\setminus E}^{\mu_\X})$ our claim $\ell(\1_E^{\mu_\X}v)=\ell(\1_E^{\mu_\X})\,\ell(v)$  follows.

Much like quotienting $\mathcal L^\infty(\bar\Sigma_\X)$ by a $\sigma$-ideal $\mathcal N$ we obtain $L^\infty(\bar\Sigma_\X,\mathcal N)$, we can take the quotient of an $\mathcal L^\infty(\bar\Sigma_\X)$-normed module $\bar{\mathscr M}$ and obtain an $L^\infty(\bar\Sigma_\X,\mathcal N)$-normed module.  Specifically,   we can define an equivalence relation \(\sim_{\mathcal N}\) on
\(\bar{\mathscr M}\) by declaring  \(v\sim_{\mathcal N} w\) provided
\(\pi_{\mathcal N}(|v-w|)=0\). Then it is easily seen that  the quotient space
\[
\Pi_{\mathcal N}(\bar{\mathscr M})\coloneqq\bar{\mathscr M}/\sim_{\mathcal N}
\]
inherits the structure of an $L^\infty(\bar\Sigma_\X,\mathcal N)$-normed module. We will denote by
\(\pi_{\mathcal N}\colon\bar{\mathscr M}\to\Pi_{\mathcal N}(\bar{\mathscr M})\) the canonical projection map. If $\mathcal N=\mathcal N_\mu$ is the $\sigma$-ideal of $\mu$-negligible sets for some given measure $\mu$, we shall use the formalism $\sim_\mu$, $\pi_\mu$ and $\Pi_\mu (\bar{\mathscr M})$ for the above.
  
A first example of this construction is in the following result, proven in \cite[Lemma 3.7]{DMLP21}, that shows that if we first lift a module and then quotient it back we return to the original module:
\begin{lemma}\label{lem:quotient_lift}
Let \((\X,\Sigma_\X,\mu_\X)\) be a $\sigma$-finite measure space, $\bar\Sigma_\X$ the completion of $\Sigma_\X$, \(\ell\) a lifting of \(\mu_\X\) and $\mathscr M$ an \(L^\infty(\mu_\X)\)-normed module. 

Then  \(\pi_{\mu_\X}\circ\ell\colon\mathscr M\to\Pi_{\mu_\X}(\ell\mathscr M)\) is an  isomorphism of \(L^\infty(\mu_\X)\)-normed modules.
\end{lemma}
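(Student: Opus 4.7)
The plan is to verify that $T\coloneqq\pi_{\mu_\X}\circ\ell$ is an $L^\infty(\mu_\X)$-module morphism that preserves the pointwise norm and is surjective; injectivity will then be automatic. Additivity and $\R$-linearity of $T$ are trivial; for $L^\infty(\mu_\X)$-linearity I would use \eqref{eq:prodlift} together with the identity $\pi_{\mu_\X}(\ell(f))=f$ valid for every $f\in L^\infty(\mu_\X)$, computing $T(fv)=\pi_{\mu_\X}(\ell(f)\ell(v))=f\cdot T(v)$. For the pointwise norm, Theorem \ref{thm:lift_mod} i) gives $|\ell(v)|=\ell(|v|)$, whose $\pi_{\mu_\X}$-image is $|v|$; hence $|T(v)|=|v|$ in $L^\infty(\mu_\X)$, so $T$ is an isometry, in particular injective.

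For surjectivity I would exploit item ii) of Theorem \ref{thm:lift_mod}: any $w\in\ell\mathscr M$ is a norm-limit of glueings $w_k=\sum_n\1_{F_n^k}\cdot\ell(v_n^k)$ along $\bar\Sigma_\X$-partitions $(F_n^k)_n$ of $\X$ with $(v_n^k)_n\subset\mathscr M$. Since $\bar\Sigma_\X$ is the $\mu_\X$-completion of $\Sigma_\X$, I can choose $\tilde F_n^k\in\Sigma_\X$ with $\mu_\X(F_n^k\Delta\tilde F_n^k)=0$ and, after minor modifications on $\mu_\X$-null sets, assume that $(\tilde F_n^k)_n$ is an honest partition of $\X$ in $\Sigma_\X$. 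The glueing property of $\mathscr M$ then produces $v_k\coloneqq\sum_n\1_{\tilde F_n^k}^{\mu_\X}\cdot v_n^k\in\mathscr M$.

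The key intermediate claim is that $\ell(v_k)=\sum_n\1_{\ell(\tilde F_n^k)}\cdot\ell(v_n^k)$ in $\ell\mathscr M$. Since $\ell$ preserves unions, intersections and the sets $\varnothing,\X$, the family $(\ell(\tilde F_n^k))_n$ is a $\bar\Sigma_\X$-partition of $\X$; by uniqueness of the glueing in the $\mathcal L^\infty(\bar\Sigma_\X)$-normed module $\ell\mathscr M$, the claim reduces to checking $\1_{\ell(\tilde F_n^k)}\cdot\ell(v_k)=\1_{\ell(\tilde F_n^k)}\cdot\ell(v_n^k)$ for every $n$, which via \eqref{eq:prodlift} is exactly the defining identity $\1_{\tilde F_n^k}^{\mu_\X}\cdot v_k=\1_{\tilde F_n^k}^{\mu_\X}\cdot v_n^k$ of $v_k$. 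Passing to the $\sim_{\mu_\X}$-quotient, using that $\pi_{\mu_\X}$ is a module morphism and that $\1_{\ell(\tilde F_n^k)}$ coincides $\mu_\X$-a.e.\ with $\1_{F_n^k}$, I obtain $T(v_k)=\pi_{\mu_\X}(w_k)$.

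Finally, $\pi_{\mu_\X}\colon\ell\mathscr M\to\Pi_{\mu_\X}(\ell\mathscr M)$ is $1$-Lipschitz (the $L^\infty(\mu_\X)$-norm of a class is bounded by the $\mathcal L^\infty(\bar\Sigma_\X)$-norm of any representative), so $\pi_{\mu_\X}(w_k)\to\pi_{\mu_\X}(w)$; since $T$ is an isometry, its image is closed, and hence $\pi_{\mu_\X}(w)\in T(\mathscr M)$. The main obstacle is the compatibility between $\ell$ and the glueing operation — the intermediate claim in the previous paragraph — whose proof ultimately reduces to \eqref{eq:prodlift} and the uniqueness of glueings; all the remaining ingredients are routine.
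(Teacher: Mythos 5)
Your overall scheme -- show that $T=\pi_{\mu_\X}\circ\ell$ is a pointwise-norm-preserving $L^\infty(\mu_\X)$-linear map, then get surjectivity from density of glueings of lifted elements plus closedness of the isometric image -- is sound, and indeed the paper offers no proof of its own here (it quotes \cite{DMLP21}), so this is a reasonable direct reconstruction. The first part (module morphism property via \eqref{eq:prodlift} and $\pi_{\mu_\X}(\ell(f))=f$, norm preservation via $|\ell(v)|=\ell(|v|)$) is correct.

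There is, however, one step that fails as written: the intermediate claim that $\ell(v_k)=\sum_n\1_{\ell(\tilde F_n^k)}\cdot\ell(v_n^k)$ holds \emph{exactly} in $\ell\mathscr M$, justified by asserting that $(\ell(\tilde F_n^k))_n$ is a $\bar\Sigma_\X$-partition of $\X$. A lifting preserves only \emph{finite} unions; for a countably infinite partition $(\tilde F_n)_n\subset\Sigma_\X$ of $\X$ the lifted sets $\ell(\tilde F_n)$ are pairwise disjoint, but their union is in general a proper (co-null) subset of $\X$: the finitely additive $\{0,1\}$-valued set function $E\mapsto\1_{\ell(E)}(x)$ may vanish on every $\tilde F_n$ while equalling $1$ on $\X$. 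Hence you cannot invoke uniqueness of the glueing (which presupposes a partition of all of $\X$), and the exact identity can genuinely fail on the leftover set $R:=\X\setminus\bigcup_n\ell(\tilde F_n^k)$: $R$ is $\mu_\X$-null, but $\ell\mathscr M$ is normed over $\mathcal L^\infty(\bar\Sigma_\X)$ with the trivial ideal $\{\varnothing\}$, so its elements are not determined up to null sets and $\1_R\cdot\ell(v_k)$ need not vanish.

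The good news is that the flaw is local and easily repaired, because all you need is the identity \emph{after} applying $\pi_{\mu_\X}$. The per-index identities you do establish via \eqref{eq:prodlift}, namely $\1_{\ell(\tilde F_n^k)}\cdot\ell(v_k)=\1_{\ell(\tilde F_n^k)}\cdot\ell(v_n^k)$, combined with the defining property $\1_{F_n^k}\cdot w_k=\1_{F_n^k}\cdot\ell(v_n^k)$ of $w_k$, give $\1_{G_n}\cdot\ell(v_k)=\1_{G_n}\cdot w_k$ for $G_n:=\ell(\tilde F_n^k)\cap F_n^k$; since $\X\setminus\bigcup_n G_n$ is $\mu_\X$-null, it follows that $|\ell(v_k)-w_k|=0$ holds $\mu_\X$-a.e., i.e.\ $T(v_k)=\pi_{\mu_\X}(w_k)$, which is exactly what your density/closed-image argument needs. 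You should also make explicit the (routine) bound $\sup_n\big\|\1_{\tilde F_n^k}^{\mu_\X}|v_n^k|\big\|_{L^\infty(\mu_\X)}\le\sup_n\sup_\X\1_{F_n^k}\,\ell(|v_n^k|)<\infty$, which legitimises the glueing defining $v_k$ in $\mathscr M$. With these two adjustments the proof is complete.
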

\subsection{Modules as spaces of sections}\label{se:mss2}

Standing assumptions of this section are:
\begin{itemize}
\item[-]  $(\X,\Sigma_\X,\mu_\X)$ and  $(\Y,\Sigma_\Y,\mu_\Y)$ are  $\sigma$-finite measure spaces,
\item[-] $\bar\Sigma_\X$ and $\bar\Sigma_\Y$ are the completion of $\Sigma_\X$ and $\Sigma_\Y$  w.r.t.\ the respective measures,
\item[-] the (at most countable) sets $A_i\in\bar\Sigma_\X$ are the lifted atoms of $\X$ as in Lemma \ref{le:la},
\item[-] $\varphi:\Y\to\X$ is a $(\bar\Sigma_\Y,\bar\Sigma_\X)$-measurable map with $\varphi_*\mu_\Y=\mu_\X$,
\item[-] $\ell_\X$ and $\ell_\Y$ are liftings of $\mu_\X$ and $\mu_\Y$ respectively that are compatible through $\varphi$ (see Definition \ref{def:complif}),
\item[-] $p,q\in(1,\infty)$ are with $\frac1p+\frac1q=1$,
\item[-] $\mathscr M$ is an $L^\infty(\mu_\X)$-normed module and $(\ell_\X\mathscr M,\ell_\X)$ its lifting as in Theorem \ref{thm:lift_mod}.
\end{itemize}

\subsubsection{The starting module \texorpdfstring{$\mathscr M$}{M}}\label{se:basem2} Here we describe how the theory of liftings allows to provide representations of general modules as space of sections in line with the general concepts introduced in Section \ref{se:sbb}.

The idea we will follow is to start from a module $\mathscr M$, to take its lifting $\ell\mathscr M$ and then to define the `fibre at $x$ of $\mathscr M$' as the space $\1_{\{x\}}\cdot\ell\mathscr M$. However, this procedure is not applicable if the singleton $\{x\}$ is not a measurable set; to deal with this possibility, the following simple lemma is useful.
\begin{lemma}[Lifted atoms]\label{le:la} 
Let $(\X,\Sigma_\X,\mu_\X)$ be a $\sigma$-finite measure space,  $\bar\Sigma_\X$ the completion of $\Sigma_\X$ and $\ell:\Sigma_\X\to\bar\Sigma_\X$ a lifting of $\mu_\X$. Let $(A'_i)_{i\in I}\subset\Sigma_\X$, with $I$ at most countable, be (a choice of representatives of) the atoms of $\mu_\X$. Put $A_i:=\ell(A_i')\in\bar\Sigma_\X$.

Then:
\begin{itemize}
\item[i)] For every $x\in \X\setminus \cup_iA_i$ we have $\{x\}\in\bar\Sigma_\X$.
\item[ii)] For every $E\in\Sigma_\X$ and $i\in I$ we either have $\ell(E)=A_i$ or $\ell(E)\cap A_i=\varnothing$. In particular, for any $f\in L^\infty(\mu_\X)$ and $i\in I$ the function $\ell(f)$ is constant on $A_i$.
\end{itemize}
\end{lemma}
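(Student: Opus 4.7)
The plan for (ii) is to unwind the atom property directly. For $E\in\Sigma_\X$ the dichotomy ``$\mu_\X(E\cap A_i')=0$ or $\mu_\X(A_i'\setminus E)=0$'' holds; since $\ell$ respects $\mu_\X$-a.e.\ equality, preserves finite intersections and complements, and sends null sets to $\varnothing$, the identities $\ell(E)\cap A_i=\ell(E\cap A_i')$ and $A_i\setminus\ell(E)=\ell(A_i'\setminus E)$ convert this dichotomy into ``$\ell(E)\cap A_i=\varnothing$ or $A_i\subseteq\ell(E)$'' (which I take to be the intended reading of the statement, since literally $\ell(E)=A_i$ fails for $E=\X$). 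For the ``in particular'' clause, use that $f\in L^\infty(\mu_\X)$ is $\mu_\X$-a.e.\ constant on the atom $A_i'$, say equal to $c_i\in\R$; then $f\1_{A_i'}=c_i\1_{A_i'}$ in $L^\infty(\mu_\X)$, and applying $\ell$ together with its multiplicativity $\ell(fg)=\ell(f)\ell(g)$ yields the pointwise identity $\ell(f)\1_{A_i}=c_i\1_{A_i}$, i.e.\ $\ell(f)\equiv c_i$ on $A_i$.

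For (i), the plan is to exhibit, for any $x\notin\cup_i A_i$, a $\mu_\X$-null $\bar\Sigma_\X$-measurable set containing $x$; this forces $\{x\}\in\bar\Sigma_\X$. Fix a disjoint partition $\X=\sqcup_j X_j$ with $X_j\in\Sigma_\X$ and $\mu_\X(X_j)<\infty$, given by $\sigma$-finiteness. Since $X_j\subseteq\ell(X_j)\cup(X_j\triangle\ell(X_j))$, one immediately checks
\[
\X\setminus\bigcup_j\ell(X_j)\subseteq\bigcup_j\bigl(X_j\triangle\ell(X_j)\bigr),
\]
which is a countable union of $\mu_\X$-null sets, hence null; this handles $x$ outside $\cup_j\ell(X_j)$. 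Otherwise pick the unique $j$ with $x\in\ell(X_j)$ and localize to the finite-measure piece $X_j$.

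Within $X_j$, define
\[
\alpha(x)\coloneqq\inf\bigl\{\mu_\X(E)\,\big|\,E\in\Sigma_\X,\ E\subseteq X_j,\ x\in\ell(E)\bigr\}\in[0,\mu_\X(X_j)],
\]
choose a minimizing sequence $E_n$, and set $F_n\coloneqq E_1\cap\ldots\cap E_n$, $F\coloneqq\cap_n F_n$. The finite-intersection property of $\ell$ gives $x\in\ell(F_n)$ for every $n$, and continuity from above (legitimate because $\mu_\X(X_j)<\infty$) yields $\mu_\X(F_n)\searrow\alpha(x)=\mu_\X(F)$. If $\alpha(x)=0$ then $\cap_n\ell(F_n)$ is a $\bar\Sigma_\X$-measurable null set containing $x$, and we are done. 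If $\alpha(x)>0$, the crux is to argue that $F$ itself is a $\mu_\X$-atom: for any $E'\in\Sigma_\X$ with $E'\subseteq F$, the disjoint decomposition $F_n=E'\sqcup(F_n\setminus E')$ lifts to $\ell(F_n)=\ell(E')\sqcup\ell(F_n\setminus E')$, and $x$ lies in exactly one of these two pieces; the minimality of $\alpha(x)$ then forces either $\mu_\X(E')\geq\alpha(x)$ (whence $\mu_\X(F\setminus E')=0$) or $\mu_\X(F_n\setminus E')\geq\alpha(x)$ for all $n$ (whence, letting $n\to\infty$, $\mu_\X(E')=0$). Therefore $F\sim_{\mu_\X} A_i'$ for some $i\in I$, so $\ell(F)=A_i$; since $x\notin A_i$ by hypothesis, the set $\bigl(\cap_n\ell(F_n)\bigr)\setminus A_i$ contains $x$ and has measure $\lim_n(\mu_\X(F_n)-\mu_\X(F))=0$, concluding.

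The main obstacle is the atom-producing step in the $\alpha(x)>0$ case: in this general $\sigma$-finite setting no Doob-type or separability tools are available, so the atom has to be built by hand out of the filter of measurable witnesses of $x\in\ell(\cdot)$, and the essential monotone continuity $\mu_\X(F_n)\to\mu_\X(F)$ is only legitimate after localizing to the finite-measure piece $X_j$.
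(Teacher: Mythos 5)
Your proof is correct, and for part (i) it takes a genuinely different route from the paper. Part (ii) is in substance the paper's argument: the paper notes that if both $\ell(E)\cap A_i=\ell(E\cap A_i')$ and $\ell(\X\setminus E)\cap A_i=\ell((\X\setminus E)\cap A_i')$ were nonempty then both $E\cap A_i'$ and $A_i'\setminus E$ would have positive measure, which is exactly your dichotomy, and your reading of the conclusion ($A_i\subseteq\ell(E)$ or $A_i\cap\ell(E)=\varnothing$) is what the paper's proof actually establishes; for the ``in particular'' clause the paper approximates $f$ by simple functions and uses uniform convergence of the lifts, while you use a.e.\ constancy of $f$ on the atom plus multiplicativity of the function lifting --- both work, yours being slightly more direct. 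In part (i) the paper sets $R:=\X\setminus\cup_iA_i$ and minimizes $\mu_\X(B)$ over $B\in\Sigma_\X$ with $x\in B\subset R$: if the infimum were positive, the intersection of a minimizing sequence would be an atom contained in $R$, impossible since every atom meets its lifted representative in a set of full measure; if it is zero, a null set containing $x$ is obtained directly. You instead minimize over the filter $\{E\subseteq X_j:\ x\in\ell(E)\}$ on a finite-measure piece of a $\sigma$-finite partition, extract the atom $F$ from that filter, and exhibit the null set $\bigl(\cap_n\ell(F_n)\bigr)\setminus\ell(F)$. Your version costs more (the lifted-partition reduction and continuity from above, hence the localization), but it buys robustness: it never requires the existence of $\Sigma_\X$-measurable sets $B$ with $x\in B\subset R$, which the paper's infimum tacitly presupposes (note $R$ is only $\bar\Sigma_\X$-measurable), since you test membership through $x\in\ell(E)$ rather than $x\in B$.
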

\begin{proof}\ \\
\noindent{(i)} Let $R:= \X\setminus \cup_iA_i$, $x\in R$ and let $m:=\inf\mu_\X(B)$, where the infimum is taken among all $B\in\Sigma_\X$ with $x\in B\subset R$. If $m>0$ we could find a minimizing sequence $B_n$, then the set $B:=\cap_nB_n\in\Sigma_\X$ would be an atom disjoint from $R$, which contradicts the construction. Hence $m=0$ and we can find sets $B_n\subset R$ containing $x$ with $\mu_\X(B_n)\leq\tfrac1n$. Then $B:=\cap_nB_n\in\Sigma_\X$ contains $x$ and has measure zero, therefore implying $\{x\}\in\bar \Sigma_\X$.

\noindent{(ii)} By the properties of liftings, $\ell(E)$ is not empty if and only if $\mu_\X(E)>0$. Thus if both $\ell(E)\cap A_i=\ell(E\cap A_i')$ and $\ell(\X\setminus E)\cap A_i=\ell((\X\setminus E)\cap A_i')$ are not empty, then both $E\cap A_i'$ and $(\X\setminus E)\cap A_i'$ have positive measure, contradicting the definition of $A_i'$.

Now if $f\in L^\infty(\mu_\X)$ is a simple function, say $f=\sum_j\alpha_i\1_{E_j}$, then $\ell(f)=\sum_j\alpha_j\1_{\ell  E_j}$ and the fact that $\ell (f)$ is constant on $A_i$ follows by what just proved. The conclusion for general $f\in L^\infty(\mu_\X)$ then follows by a density argument noticing that if $f_n\to f$ in $L^\infty(\mu_\X)$, then $\ell(f_n)\to \ell(f)$ uniformly.
\end{proof}
The sets $A_i$ given by this lemma are uniquely determined by $\ell$ and pairwise disjoint: we shall call them \emph{lifted atoms}.

\bigskip

Recall that in Section \ref{se:mss2} we fixed an $L^\infty(\mu_\X)$-normed module $\mathscr M$, its lifting  $\ell_\X\mathscr M$ and the corresponding lifting map $\ell_\X$. 

Let us describe $\ell_\X\mathscr M$ as space of sections; once this will be done an analogous description for $\mathscr M$ will be easy to obtain via passage to the quotient. For any $x\in\X$ we put
\[
\ell_\X\mathscr M_x:=\left\{
\begin{array}{ll}
\1_{\{x\}}\cdot\ell_\X\mathscr M,\qquad&\text{if }\{x\}\in\bar\Sigma_\X,\\
\1_{A_i}\cdot\ell_\X\mathscr M,\qquad&\text{if $x\in A_i$ for some $i\in I$.}
\end{array}
\right.
\]
The continuity of the product operation and the fact that characteristic functions are idempotent imply that $\ell_\X\mathscr M_x$ is a submodule of $\ell_\X\mathscr M$ and in particular a Banach space. The norm $\|\cdot\|_x$ on $\ell_\X\mathscr M_x$ is given by
\begin{equation}
\label{eq:normafibra}
\|v\|_x:= |v|(x)\qquad\forall v\in\ell_\X\mathscr M.
\end{equation}
Also,  given any \(v\in\ell_\X{\mathscr M}\), we shall define the `value of $v$ at $x$' as
\[
v_x\coloneqq\left\{
\begin{array}{ll}
\1_{\{x\}}\cdot v,\qquad&\text{if }\{x\}\in\bar\Sigma_\X,\\
\1_{A_i}\cdot v,\qquad&\text{if $x\in A_i$ for some $i\in I$,}
\end{array}\right.\quad \in\ell_\X{\mathscr M}_x,\qquad\text{ for every }x\in\X.
\]
Notice that $\|v_x\|_x=\|v\|_x=|v|(x)$. At this stage, to any $v\in\ell_\X\mathscr M$ is naturally associated  the  map \(\X\ni x\mapsto v_x\in\ell_\X{\mathscr M}_x\) and the construction ensures that $(\alpha_1v_1+\alpha_2v_2)_x=\alpha_1v_{1,x}+\alpha_2v_{2,x}$ for every $x\in\X$, $v_1,v_2\in\ell_\X{\mathscr M}$ and $\alpha_1,\alpha_2\in\R$.

It follows that the collection of Banach spaces $\ell_\X{\mathscr M}_\star$ and the set of test sections $\{x\mapsto v_x:v\in\ell_\X{\mathscr M}\}$ is a strong Banach bundle over $(\X,\bar\Sigma_\X,\{\varnothing\})$ in the sense of Definition \ref{def:strbb}. It is then a simple exercise to check that 
\begin{equation}
\label{eq:fibrelift}
\text{$\ell_\X{\mathscr M}\ni v\mapsto v_\cdot\in L^\infty_{\sf str}(\ell_\X{\mathscr M}_\star;\bar\Sigma_\X,\{\varnothing\})$ is an isomorphism of $\mathcal L^\infty(\bar\Sigma_\X)$-normed modules}.
\end{equation}
Indeed, the only non-trivial thing to check is surjectivity: this follows from the requirement $(ii)$ in the definition of strongly measurable section in conjunction with the completeness of $\ell_\X\mathscr M$ and the glueing property.

\begin{example}[Fibres of $\ell L^\infty(\mu)$]\label{ex:fibreR}{\rm
Let $(\X,\Sigma,\mu)$ be a $\sigma$-finite measure space and $\ell:\Sigma\to\bar\Sigma$ a lifting of $\mu$. Since $L^\infty(\mu)$ is, trivially, an $L^\infty(\mu)$-normed module, we can consider its lifting $(\ell L^\infty(\mu),\ell)$ in the sense of Theorem \ref{thm:lift_mod} and it is clear that an explicit realization of this lifting is given by the image of $L^\infty(\mu)$ in $\mathcal L^\infty(\bar\Sigma)$ via the lifting $\ell$ as defined after Theorem \ref{thm:ext_pre-lift}, with such $\ell$ as lifting map. Notice that in this realization we typically have $\ell L^\infty(\mu)\subsetneq \mathcal L^\infty(\bar\Sigma)$ because for any $A,B\in\bar\Sigma$ with $\mu(A\Delta B)=0$ at most one between $\1_A,\1_B$ is in $\ell L^\infty(\mu)$.

With this realization in mind and for fixed $x\in\X$ we claim that the map
\begin{equation}
\label{eq:fibreR}
\begin{array}{ccc}
\ell L^\infty(\mu)_x\quad&\to&\quad\R \\
f\quad&\mapsto&\quad f(x)
\end{array}
\end{equation}
is an isomorphism. Linearity is clear and norm preservation follows from the definition \eqref{eq:normafibra}. For surjectivity we notice that the properties of lifting ensure that if ${\bf 1}\in L^\infty(\mu)$ is the function $\mu$-a.e.\ equal to 1, then $\ell{\bf 1}\equiv 1$ and thus in particular $(\ell{\bf 1}_x)(x)=1$. Injectivity is clear if $\{x\}\in\bar\Sigma$, otherwise we use $(ii)$ of Lemma \ref{le:la} to conclude.
}\fr\end{example}
It is now easy to see the original module $\mathscr M$ as space of sections. Notice indeed that since what we just described  is a strong Banach bundle over  $(\X,\bar\Sigma_\X,\{\varnothing\})$, it is a fortiori a strong Banach bundle over  $(\X,\bar\Sigma_\X,\mathcal N_{\mu_\X})$, where $\mathcal N_{\mu_\X}\subset\bar\Sigma_\X$ is the $\sigma$-ideal of $\mu_\X$-negligible sets.

Hence the module   $L^\infty_{\sf str}(\ell_\X{\mathscr M}_\star;\bar\Sigma_\X,\mathcal N_{\mu_\X})\cong L^\infty_{\sf str}(\ell_\X{\mathscr M}_\star;\mu_\X)$ is well defined. We shall denote by $\pi_{\mu_\X}:L^\infty_{\sf str}(\ell_\X{\mathscr M}_\star;\bar\Sigma_\X,\{\varnothing\})\to  L^\infty_{\sf str}(\ell_\X{\mathscr M}_\star;\mu_\X)$ the projection map corresponding to the equivalence relation $\sim_{\mu_\X}$ defined as: $v_\cdot\sim_{\mu_\X} w_\cdot$ if and only if $\mu_\X(\{x:\|v_x-w_x\|_x>0\})=0$.

We then have the following result, that can be regarded as a `sectionwise version' of Lemma \ref{lem:quotient_lift}.
\begin{proposition} With the above notation and assumptions, we have that
\[
\text{the map $\pi_{\mu_\X}\circ\ell_\X:\mathscr M\to L^\infty_{\sf str}(\ell_\X{\mathscr M}_\star;\mu_\X)$ is an isomorphism of $L^\infty(\mu_\X)$-normed modules.}
\]
\end{proposition}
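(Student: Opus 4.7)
The plan is to factor the claimed isomorphism through the isomorphism $\ell_\X\mathscr M\cong L^\infty_{\sf str}(\ell_\X\mathscr M_\star;\bar\Sigma_\X,\{\varnothing\})$ of \eqref{eq:fibrelift}, and then observe that passing to $\mu_\X$-a.e.\ equivalence classes on both sides yields an isomorphism onto the target space.

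\textbf{Step 1.} Let $\Psi\colon\ell_\X\mathscr M\to L^\infty_{\sf str}(\ell_\X\mathscr M_\star;\bar\Sigma_\X,\{\varnothing\})$ denote the isomorphism $v\mapsto v_\cdot$ provided by \eqref{eq:fibrelift}. I want to check that $\Psi$ descends to an isomorphism after quotienting by $\sim_{\mu_\X}$ on both sides. For $v,w\in\ell_\X\mathscr M$, on the one hand $v\sim_{\mu_\X}w$ means $\pi_{\mu_\X}(|v-w|)=0$, i.e.\ the set $\{x:|v-w|(x)>0\}$ is $\mu_\X$-negligible. On the other hand $\Psi(v)\sim_{\mu_\X}\Psi(w)$ means that $\{x:\|v_x-w_x\|_x>0\}$ is $\mu_\X$-negligible. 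Using that $(v-w)_x=v_x-w_x$ together with \eqref{eq:normafibra}, which gives $\|v_x-w_x\|_x=|v-w|(x)$, these two conditions coincide. Hence $\Psi$ descends to a well-defined bijective map $\bar\Psi\colon\Pi_{\mu_\X}(\ell_\X\mathscr M)\to L^\infty_{\sf str}(\ell_\X\mathscr M_\star;\mu_\X)$.

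\textbf{Step 2.} One then verifies that $\bar\Psi$ is an isomorphism of $L^\infty(\mu_\X)$-normed modules. Linearity and preservation of the pointwise norm come directly from the corresponding properties of $\Psi$, once one notes that the pointwise norm on both quotients is $\pi_{\mu_\X}$ applied to the respective $\bar\Sigma_\X$-pointwise norm (which is $|v|$ on the left and $x\mapsto\|v_x\|_x$ on the right, and these agree by \eqref{eq:normafibra}). Compatibility with $L^\infty(\mu_\X)$-multiplication reduces, via the lifting $\ell_\X$ of scalars, to compatibility of $\Psi$ with $\mathcal L^\infty(\bar\Sigma_\X)$-multiplication, which is part of Step 1. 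Thus $\bar\Psi$ is an isomorphism of $L^\infty(\mu_\X)$-normed modules.

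\textbf{Step 3.} By Lemma \ref{lem:quotient_lift}, the map $\pi_{\mu_\X}\circ\ell_\X\colon\mathscr M\to\Pi_{\mu_\X}(\ell_\X\mathscr M)$ is an isomorphism of $L^\infty(\mu_\X)$-normed modules. Composing with $\bar\Psi$ gives an isomorphism $\mathscr M\to L^\infty_{\sf str}(\ell_\X\mathscr M_\star;\mu_\X)$; unwinding the definitions shows this composition sends $v\in\mathscr M$ to the $\sim_{\mu_\X}$-class of $x\mapsto(\ell_\X v)_x$, which is precisely the map in the statement.

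The only step that requires any care is the identification in Step 1 between the two notions of $\mu_\X$-a.e.\ equality (the module-theoretic one on $\ell_\X\mathscr M$ and the sectionwise one on $L^\infty_{\sf str}$); everything else is a routine consequence of \eqref{eq:fibrelift} and Lemma \ref{lem:quotient_lift}.
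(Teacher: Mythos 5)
Your proposal is correct and is essentially the paper's own argument: both rest on \eqref{eq:fibrelift}, on the identity \eqref{eq:normafibra} (which identifies the module-level and the sectionwise $\mu_\X$-a.e.\ equivalence relations), and on Lemma \ref{lem:quotient_lift}, the only difference being that you package the map as a composition of two isomorphisms while the paper verifies $L^\infty(\mu_\X)$-linearity and pointwise norm preservation directly and then checks surjectivity by lifting a representative through \eqref{eq:fibrelift}. Both versions leave implicit the same small point, namely that every class in $L^\infty_{\sf str}(\ell_\X{\mathscr M}_\star;\mu_\X)$ admits a representative lying in $L^\infty_{\sf str}(\ell_\X{\mathscr M}_\star;\bar\Sigma_\X,\{\varnothing\})$, which is harmless here since $\bar\Sigma_\X$ is complete.
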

\begin{proof}
The construction ensures that $\pi_{\mu_\X}\circ\ell_\X$ is $L^\infty(\mu_\X)$-linear and preserves the pointwise norm, thus we only have to prove surjectivity. An arbitrary element of $L^\infty_{\sf str}(\ell_\X{\mathscr M}_\star;\mu_\X)$ can be written as  $\pi_{\mu_\X}(v_\cdot)$ for some $v_\cdot\in L^\infty_{\sf str}(\ell_\X{\mathscr M}_\star;\bar\Sigma_\X,\{\varnothing\})$ and \eqref{eq:fibrelift}  ensures that $v_\cdot$ is in fact the map $x\mapsto v_x$ for some $v\in\ell_\X\mathscr M$. We thus put $\tilde v:=\pi_{\mu_\X}(v)\in\Pi_{\mu_\X}(\ell_\X\mathscr M)\cong\mathscr M$, having used Lemma \ref{lem:quotient_lift} in the stated isomorphism, and notice that unwrapping the definitions we have  that $\pi_{\mu_\X}(\ell_\X(\tilde v))=\pi_{\mu_\X}(v_\cdot)\in  L^\infty_{\sf str}(\ell_\X{\mathscr M}_\star;\mu_\X)$, as desired.
\end{proof}

\subsubsection{The pullback}
In this section we study the relation between the pullback operation and liftings and use this knowledge to describe the pullback of a module as space of sections.

Let us fix two $\sigma$-finite measure spaces \((\X,\Sigma_\X,\mu_\X)\), \((\Y,\Sigma_\Y,\mu_\Y)\), liftings  \(\ell_\X\) and \(\ell_\Y\)  of
\(\mu_\X\) and \(\mu_\Y\) respectively and a map  \(\varphi\colon\Y\to\X\) that is $(\bar\Sigma_\Y,\bar\Sigma_\X)$-measurable and with \(\varphi_*\mu_\Y=\mu_\X\).

\begin{definition}[Compatible lifting through composition]\label{def:complif}
We say that \(\ell_\X\) and \(\ell_\Y\) are {compatible through \(\varphi\)} provided it holds that
\[
\ell_\Y\big(\varphi^{-1}(E)\big)=\varphi^{-1}\big(\ell_\X(E)\big),\quad\text{ for every }E\in\Sigma_\X.
\]
\end{definition}
It can be readily checked that $\ell_\X$ and $\ell_\Y$ are compatible  through \(\varphi\) if and only if
\begin{equation}\label{eq:compat_fcs}
\ell_\Y(f\circ\varphi)=\ell_\X(f)\circ\varphi,\quad\text{ for every }f\in L^\infty(\mu_\X).
\end{equation}
This can be proved by first inspecting simple functions and then by approximation.

The following is easily established:
\begin{proposition}[Existence of compatible liftings]\label{thm:exist_compat_lift}
Let \((\X,\Sigma_\X,\mu_\X)\) and \((\Y,\Sigma_\Y,\mu_\Y)\) be $\sigma$-finite measure spaces, \(\ell_\X\) a lifting of \(\mu_\X\) and \(\varphi\colon\Y\to\X\) a $(\bar\Sigma_\Y,\bar\Sigma_\X)$-measurable map such that \(\varphi_*\mu_\X=\mu_\Y\). 

Then there exists a lifting
\(\ell_\Y\) of \(\mu_\Y\) that is compatible with \(\ell_\X\) through \(\varphi\).
\end{proposition}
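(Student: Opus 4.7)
The plan is to apply the extension theorem for pre-liftings (Theorem \ref{thm:ext_pre-lift}) after constructing a suitable pre-lifting on the $\sigma$-subalgebra
\[
\Xi \coloneqq \varphi^{-1}(\Sigma_\X) = \{\varphi^{-1}(E) : E \in \Sigma_\X\} \subset \bar\Sigma_\Y,
\]
defined by pulling $\ell_\X$ back through $\varphi$. Concretely, I would set
\[
\phi\big(\varphi^{-1}(E)\big) \coloneqq \varphi^{-1}\big(\ell_\X(E)\big), \qquad E \in \Sigma_\X.
\]
(To match the format of Theorem \ref{thm:ext_pre-lift}, I would tacitly replace $(\Y,\Sigma_\Y,\mu_\Y)$ by $(\Y,\bar\Sigma_\Y,\mu_\Y)$: any resulting lifting of $\bar\Sigma_\Y$ restricts to a lifting of $\mu_\Y$ on $\Sigma_\Y$.)

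The first point to verify is that $\phi$ is well-defined. Suppose $\varphi^{-1}(E_1) = \varphi^{-1}(E_2)$ with $E_1,E_2\in\Sigma_\X$. Then $\varphi^{-1}(E_1\Delta E_2) = \varnothing$, and since $\varphi_*\mu_\Y=\mu_\X$ this yields
\[
\mu_\X(E_1\Delta E_2) = \mu_\Y\big(\varphi^{-1}(E_1\Delta E_2)\big) = 0.
\]
By the corresponding axiom for $\ell_\X$ we then have $\ell_\X(E_1) = \ell_\X(E_2)$, hence $\varphi^{-1}(\ell_\X(E_1)) = \varphi^{-1}(\ell_\X(E_2))$. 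The remaining pre-lifting axioms are now mechanical: $\phi(\varnothing)=\varnothing$ and $\phi(\Y)=\Y$ follow from $\ell_\X(\varnothing)=\varnothing$ and $\ell_\X(\X)=\X$; compatibility with union, intersection and the null-set relation transfers from $\ell_\X$ to $\phi$ because $\varphi^{-1}$ commutes with these Boolean operations and preserves the measure of symmetric differences; and finally
\[
\mu_\Y\big(\varphi^{-1}(E)\Delta\phi(\varphi^{-1}(E))\big) = \mu_\Y\big(\varphi^{-1}(E\Delta \ell_\X(E))\big) = \mu_\X\big(E\Delta\ell_\X(E)\big) = 0.
\]

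With $\phi$ confirmed to be a pre-lifting on $\Xi$, Theorem \ref{thm:ext_pre-lift} produces a lifting $\ell_\Y$ of $\mu_\Y$ with $\ell_\Y|_\Xi = \phi$. By construction this gives $\ell_\Y(\varphi^{-1}(E)) = \varphi^{-1}(\ell_\X(E))$ for every $E\in\Sigma_\X$, which is exactly the compatibility through $\varphi$ in Definition \ref{def:complif}.

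The main (and really only) obstacle is the well-definedness check: a priori many distinct $E\in\Sigma_\X$ may share the same preimage, so one must use measure-preservation to force them into the same $\mu_\X$-equivalence class, at which point the pre-lifting properties of $\ell_\X$ take over. Everything else is formal manipulation of $\varphi^{-1}$ and an invocation of the already-established extension theorem.
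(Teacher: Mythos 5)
Your proposal is correct and follows essentially the same route as the paper: both define the pre-lifting $\phi(\varphi^{-1}(E)) \coloneqq \varphi^{-1}(\ell_\X(E))$ on the pullback $\sigma$-subalgebra, verify well-definedness and the pre-lifting axioms via $\varphi_*\mu_\Y=\mu_\X$ and the Boolean-compatibility of $\varphi^{-1}$, and then invoke Theorem \ref{thm:ext_pre-lift}. The only (harmless) difference is that the paper takes $\Xi=\varphi^{-1}(\bar\Sigma_\X)$ while you take $\varphi^{-1}(\Sigma_\X)$, and you make explicit the passage to the completed space $(\Y,\bar\Sigma_\Y,\mu_\Y)$ that the paper leaves implicit.
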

\begin{proof}
Define \(\Xi\coloneqq\{\varphi^{-1}(E)\,:\,E\in\bar\Sigma_\X\}\subset\bar\Sigma_\Y\). It can be readily checked that \(\Xi\) is a \(\sigma\)-subalgebra of \(\bar\Sigma_\Y\).
Define \(\phi\colon\Xi\to\bar\Sigma_\Y\) as \(\phi\big(\varphi^{-1}(E)\big)\coloneqq\varphi^{-1}\big(\ell_\X(E)\big)\) for every \(E\in\bar\Sigma_\X\).
Let us first prove that \(\phi\) is well-defined: given any \(E,F\in\bar\Sigma_\X\) satisfying \(\varphi^{-1}(E)=\varphi^{-1}(F)\), it holds that
\(\mu_\X(E\Delta F)=\mu_\Y\big(\varphi^{-1}(E)\Delta\varphi^{-1}(F)\big)=0\) and thus \(\ell_\X(E)\Delta\ell_\X(F)=\ell_\X(E\Delta F)=\varnothing\),
which implies that \(\ell_\X(E)=\ell_\X(F)\) and accordingly \(\varphi^{-1}\big(\ell_\X(E)\big)=\varphi^{-1}\big(\ell_\X(F)\big)\).
It is straightforward to check that \(\phi\) is a Boolean homomorphism. Let us now check that \(\phi\) is a pre-lifting. Given any \(E,F\in\bar\Sigma_\X\)
with \(\mu_\Y\big(\varphi^{-1}(E)\Delta\varphi^{-1}(F)\big)=0\), we have that \(\mu_\X(E\Delta F)=0\), so accordingly
\[
\phi\big(\varphi^{-1}(E)\big)=\varphi^{-1}\big(\ell_\X(E)\big)=\varphi^{-1}\big(\ell_\X(F)\big)=\phi\big(\varphi^{-1}(F)\big).
\]
Moreover, for any \(E\in\bar\Sigma_\X\) we have that
\[
\mu_\Y\big(\varphi^{-1}(E)\Delta\phi\big(\varphi^{-1}(E)\big)\big)=\mu_\Y\big(\varphi^{-1}(E)\Delta\varphi^{-1}\big(\ell_\X(E)\big)\big)
=\mu_\X\big(E\Delta\ell_\X(E)\big)=0.
\]
Therefore, \(\phi\) is a pre-lifting of \(\mu_\Y\). Extending it to a lifting via  Theorem \ref{thm:ext_pre-lift} we  conclude.
\end{proof}
In a moment we are going to study the pullback of the lifted module $\ell_\X\mathscr M$: this concept is defined via the following result, whose proof closely follows that of the analogue Theorem/Definition \ref{thm:defpb} and will therefore be omitted.
\begin{thmdef}[Pullback of an \(L^\infty(\Sigma_\X,\mathcal N_\X)\)-normed module]\label{thm:defpb2}
Let \((\X,\Sigma_\X)\) and \((\Y,\Sigma_\Y)\) be measurable spaces. Let \(\mathcal N_\X\subset\Sigma_\X\) and \(\mathcal N_\Y\subset\Sigma_\Y\)
be \(\sigma\)-ideals. Let \(\varphi\colon\Y\to\X\) be a measurable map such that \(\varphi^{-1}(N)\in\mathcal N_\Y\) for every \(N\in\mathcal N_\X\).
Let \(\mathscr M\) be an \(L^\infty(\Sigma_\X,\mathcal N_\X)\)-normed module. 

Then there exists a unique couple \((\varphi^*\mathscr M,\varphi^*)\), called
the \emph{pullback} of \(\mathscr M\) under \(\varphi\), where \(\varphi^*\mathscr M\) is an \(L^\infty(\Sigma_\Y,\mathcal N_\Y)\)-normed module and
\(\varphi^*\colon\mathscr M\to\varphi^*\mathscr M\) is a linear map such that the following conditions hold:
\begin{itemize}
\item[i)] The identity \(|\varphi^*v|=|v|\circ\varphi\) holds for every \(v\in\mathscr M\).
\item[ii)] $\{\varphi^*v\,:\,v\in\mathscr M\}$ generates  \(\varphi^*\mathscr M\).
\end{itemize}
Uniqueness is up to unique isomorphism: given any \((\mathscr N,T)\) with the same properties, there exists a unique 
 isomorphism \(\Phi\colon\varphi^*\mathscr M\to\mathscr N\) such that \(T=\Phi\circ\varphi^*\).
\end{thmdef}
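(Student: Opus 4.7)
The plan is to mirror, in the $L^\infty(\Sigma,\mathcal N)$-setting, the construction used for Theorem/Definition \ref{thm:defpb}; see \cite[Proposition 1.6.2]{Gigli14}. The starting point is a \emph{pre-pullback} $\mathscr M_0$ whose elements are formal expressions of the form $\sum_{i=1}^n \1_{E_i}^{\mathcal N_\Y}\boxtimes v_i$, with $(E_i)_{i=1}^n$ a measurable partition of $\Y$ and $v_i\in\mathscr M$, where $\boxtimes$ is only a bookkeeping symbol; the sum, the $\R$-linear structure and the $L^\infty(\Sigma_\Y,\mathcal N_\Y)$-module action on $\mathscr M_0$ are defined in the obvious way after passage to a common refinement of the partitions involved. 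On $\mathscr M_0$ I introduce the candidate pointwise norm
\[
\Big|\sum_{i=1}^n \1_{E_i}^{\mathcal N_\Y}\boxtimes v_i\Big|:=\sum_{i=1}^n \1_{E_i}^{\mathcal N_\Y}\cdot (|v_i|\circ\varphi)\in L^\infty(\Sigma_\Y,\mathcal N_\Y),
\]
noting that $|v_i|\circ\varphi$ is well defined because the assumption $\varphi^{-1}(\mathcal N_\X)\subset\mathcal N_\Y$ implies that precomposition with $\varphi$ induces a ring homomorphism $L^\infty(\Sigma_\X,\mathcal N_\X)\to L^\infty(\Sigma_\Y,\mathcal N_\Y)$.

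The first (and, I expect, main) technical point is to show that $|\cdot|$ depends only on the formal element and not on the chosen representation: given two different presentations of the same element, I would pass to the common refinement of the two partitions and use the pointwise norm axioms of $\mathscr M$, in particular its behaviour under multiplication by characteristic functions, to conclude that the two values coincide. Once $|\cdot|$ is well posed it is routine to check that it is a pointwise seminorm satisfying $|f\cdot v|=|f||v|$ for $f\in L^\infty(\Sigma_\Y,\mathcal N_\Y)$. I would then quotient $\mathscr M_0$ by the subspace of zero-norm elements, take the Banach space completion with respect to the global norm $\|\cdot\|:=\||\cdot|\|_{L^\infty(\Sigma_\Y,\mathcal N_\Y)}$, and finally enforce the glueing axiom (ii) of Definition \ref{def:normed_mod} by passing to equivalence classes of countable expressions $\sum_{n\in\N}\1_{F_n}^{\mathcal N_\Y}\cdot w_n$, with $(F_n)$ a measurable partition of $\Y$ and $(w_n)$ uniformly bounded in pointwise norm. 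This produces an $L^\infty(\Sigma_\Y,\mathcal N_\Y)$-normed module $\varphi^*\mathscr M$ on which the pointwise norm and module operations extend continuously and by glueing from $\mathscr M_0$.

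I would then define $\varphi^*\colon\mathscr M\to\varphi^*\mathscr M$ by $\varphi^*v:=[\1_\Y^{\mathcal N_\Y}\boxtimes v]$: condition (i) holds by construction of $|\cdot|$, while condition (ii) follows from the fact that every element of $\mathscr M_0$ is obtained from symbols of the form $\1_\Y^{\mathcal N_\Y}\boxtimes v$ via the $L^\infty(\Sigma_\Y,\mathcal N_\Y)$-action, and $\mathscr M_0$ is dense in $\varphi^*\mathscr M$ after completion and glueing. For uniqueness, starting from another admissible pair $(\mathscr N,T)$ I would set $\Phi\bigl(\sum_i\1_{E_i}^{\mathcal N_\Y}\boxtimes v_i\bigr):=\sum_i\1_{E_i}^{\mathcal N_\Y}\cdot T(v_i)$; the identity $|T(v)|=|v|\circ\varphi$ together with an argument analogous to the one used for the well-definedness of $|\cdot|$ shows that $\Phi$ is well defined, $L^\infty(\Sigma_\Y,\mathcal N_\Y)$-linear and pointwise-norm preserving on $\mathscr M_0$. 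Hence it extends uniquely, by continuity and glueing, to a morphism $\Phi\colon\varphi^*\mathscr M\to\mathscr N$ with $T=\Phi\circ\varphi^*$, which is an isomorphism because generation (ii) for $\mathscr N$ forces surjectivity while the preservation of the pointwise norm gives injectivity.
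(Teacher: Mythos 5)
Your plan is the construction the paper has in mind: the proof of this statement is omitted there precisely because it ``closely follows'' Theorem/Definition \ref{thm:defpb}, i.e.\ the explicit pre-pullback construction of \cite{Gigli14}, and your pre-pullback of formal sums, the candidate pointwise norm, its well-posedness via common refinements, property (i), and the uniqueness argument (define $\Phi$ on the generating elements, extend by $L^\infty(\Sigma_\Y,\mathcal N_\Y)$-linearity, glueing and continuity, get injectivity from preservation of the pointwise norm and surjectivity from closedness of the isometric image plus generation of $\mathscr N$) are all the right ingredients.

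The genuine gap is in the step ``take the Banach space completion \dots\ and finally enforce the glueing axiom''. In the $L^p$ case with $p<\infty$ completion alone suffices, because a glueing is the norm limit of its finite partial sums; in the $L^\infty$ case it is not (the paper stresses this right after Definition \ref{def:normed_mod}), and indeed the completion of the finite-sum pre-pullback genuinely lacks the glueing property: for $\mathscr M$ an infinite-dimensional Banach space over a one-point base and $\Y=[0,1]$, it consists of essentially uniform limits of finitely-valued maps and misses the section $\sum_n\1_{E_n}^{\mathcal N_\Y}\boxtimes v_n$ built from a bounded $1$-separated sequence $(v_n)$. So the glueing step is needed, but if you adjoin countable glueings \emph{after} completing you must still prove that the resulting space is complete, and this is neither addressed nor clear: a Cauchy sequence of glueings of elements of the completion involves countably many countable partitions whose common refinement need not be countable, so there is no evident candidate limit inside your class (in this abstract $\sigma$-ideal setting there is no tightness argument available to re-express such a limit as a single glueing). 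The standard repair, and presumably the intended one, is to reverse the order: let $\mathscr M_0$ consist from the start of countable formal sums $\sum_{n\in\N}\1_{E_n}^{\mathcal N_\Y}\boxtimes v_n$ with $\sup_n\big\||v_n|\big\|<\infty$, quotient by null elements and complete; then verify that the completion has the glueing property by approximating each piece $v_n$ up to $\eps$ in norm by a countable formal sum, glueing these approximations (again a countable formal sum, since a countable union of countable partitions is countable) and letting $\eps\to0$ along the resulting Cauchy family. With this modification the rest of your argument goes through unchanged, and (ii) is immediate because the countable formal sums are exactly $\mathscr G(\{\varphi^*v:v\in\mathscr M\})$ and are dense by construction.
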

We turn to the description of $\varphi^*(\ell_\X\mathscr M)$ as space of sections. Recall from the previous section that we have the collection $\ell_\X\mathscr M_\star$ of Banach spaces indexed by points in $\X$. We can then consider the collection $\ell_\X\mathscr M_{\varphi(\star)}$ of Banach spaces indexed by points in $\Y$, where the space corresponding to $y\in\Y$ is $\ell_\X\mathscr M_{\varphi(y)}$. We shall consider as test sections those of the kind $y\mapsto v_{\varphi(y)}$ for $v\in\ell_\X\mathscr M$ (recall \eqref{eq:fibrelift}). It is clear that these satisfy the requirements in Definition \ref{def:strbb} on $(\Y,\bar\Sigma_\Y,\{\varnothing\})$ and thus that what just described is a strong Banach bundle.

We then have the following result:
\begin{proposition}[Pullback of lifted modules]\label{prop:lift_and_pullback_mod} With the same assumptions and notation as in Section \ref{se:mss2} and above, we have:
\begin{equation}
\label{eq:equivpblift}
\Big(\,L^\infty_{\sf str}\big(\ell_\X\mathscr M_{\varphi(\star)};\bar \Sigma_\Y,\{\varnothing\}\big)\,,\,{\rm Pb}\,\Big)\cong\big(\varphi^*(\ell_\X\mathscr M),\varphi^*\big)
\end{equation}
where the pullback map ${\rm Pb}:\ell_\X\mathscr M\to L^\infty_{\sf str}(\ell_\X\mathscr M_{\varphi(\star)};\bar\Sigma_\Y,\{\varnothing\})$ is the one sending $v=(x\mapsto v_x)$ (recall \eqref{eq:fibrelift}) to $y\mapsto v_{\varphi(y)}$.
\end{proposition}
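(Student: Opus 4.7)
The plan is to apply the uniqueness part of Theorem/Definition \ref{thm:defpb2}: I will show that the pair on the left-hand side of \eqref{eq:equivpblift} satisfies properties (i) and (ii) characterizing the pullback of \(\ell_\X\mathscr M\) under \(\varphi\), so that a unique isomorphism \(\Phi\) with \(\varphi^*=\Phi\circ{\rm Pb}\) is automatic. As a preliminary step, I would verify that what is described is indeed a strong Banach bundle over \((\Y,\bar\Sigma_\Y,\{\varnothing\})\): axiom (a) of Definition \ref{def:strbb} follows from the pointwise linearity \((\alpha_1 v_1+\alpha_2 v_2)_{\varphi(y)}=\alpha_1(v_1)_{\varphi(y)}+\alpha_2(v_2)_{\varphi(y)}\), while axiom (b) follows from the identity \(\|v_{\varphi(y)}\|_{\varphi(y)}=|v|(\varphi(y))\) (see \eqref{eq:normafibra}) combined with the \((\bar\Sigma_\Y,\bar\Sigma_\X)\)-measurability of \(\varphi\) and the fact that \(|v|\in\mathcal L^\infty(\bar\Sigma_\X)\). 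The same identity also yields property (i): \(|{\rm Pb}(v)|=|v|\circ\varphi\) holds pointwise on \(\Y\), and linearity of \({\rm Pb}\) is immediate.

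Next I would verify the generation property (ii). Let \(w_\cdot\in L^\infty_{\sf str}(\ell_\X\mathscr M_{\varphi(\star)};\bar\Sigma_\Y,\{\varnothing\})\). By Definition \ref{def:strbb}(ii), with \(\mathcal N=\{\varnothing\}\), there exists a countable family \((v_n)\subset\ell_\X\mathscr M\) such that \(w_\cdot(y)\) belongs to the closure of \(\{(v_n)_{\varphi(y)}\}\) in \(\ell_\X\mathscr M_{\varphi(y)}\) for every \(y\in\Y\). For each \(k,n\in\N\), set
\[
E_{k,n}\coloneqq\bigl\{y\in\Y\;\big|\;\|w_\cdot(y)-(v_n)_{\varphi(y)}\|_{\varphi(y)}\leq 1/k\bigr\}\in\bar\Sigma_\Y,
\]
where measurability follows from Definition \ref{def:strbb}(i) applied to \(w_\cdot\) together with axiom (b) of the test sections. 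Let \(F_{k,n}\coloneqq E_{k,n}\setminus\bigcup_{m<n}E_{k,m}\); then \((F_{k,n})_n\) is a \(\bar\Sigma_\Y\)-measurable partition of \(\Y\) and the boundedness condition needed for glueing holds, as \(\|\1_{F_{k,n}}\cdot{\rm Pb}(v_n)\|\leq\|w_\cdot\|_{L^\infty_{\sf str}}+1/k\) uniformly in \(n\). Consequently
\[
\tilde w_k\coloneqq\sum_{n\in\N}\1_{F_{k,n}}\cdot{\rm Pb}(v_n)\in\mathscr G\bigl(\{{\rm Pb}(v):v\in\ell_\X\mathscr M\}\bigr)
\]
is well defined and satisfies \(\|\tilde w_k-w_\cdot\|_{L^\infty_{\sf str}}\leq 1/k\) by construction. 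Hence \(\tilde w_k\to w_\cdot\) in norm, which is exactly the required generation property.

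Having verified (i) and (ii), the uniqueness part of Theorem/Definition \ref{thm:defpb2} produces the desired isomorphism, establishing \eqref{eq:equivpblift}. The only mildly subtle step is the generation argument, where one has to combine three ingredients: the pointwise approximation built into the notion of strong measurability, the \(\bar\Sigma_\Y\)-measurability of the functions \(y\mapsto\|w_\cdot(y)-(v_n)_{\varphi(y)}\|_{\varphi(y)}\), and the glueing property of \(\mathcal L^\infty(\bar\Sigma_\Y)\)-normed modules; the rest is essentially unwinding definitions and using \eqref{eq:normafibra}.
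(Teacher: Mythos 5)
Your proof is correct and follows essentially the same route as the paper: establish the pointwise identity \(\|v_{\varphi(y)}\|_{\varphi(y)}=|v|(\varphi(y))\) giving property (i), observe that the test sections generate via requirement (ii) of strong measurability, and conclude by the uniqueness part of Theorem/Definition \ref{thm:defpb2}. The only difference is that you spell out explicitly (via the \(E_{k,n}\)-disjointification and glueing) the generation step that the paper leaves implicit, and your details there are sound.
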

\begin{proof} By construction we have $|{\rm Pb}(v)(y)|=|v_{\varphi(y)}|=|v|(\varphi(y))$ for any $v\in \ell_\X\mathscr M$ and $y\in\Y$.  This and the definition of the strong Banach bundle $\ell_\X\mathscr M_{\varphi(\star)}$ on $(\Y,\bar\Sigma_\Y,\{\varnothing\})$ given above gives the conclusion.
\end{proof}
Much like in the previous section, we now observe that since the collection $\ell_\X\mathscr M_{\varphi(\star)}$ of Banach spaces together with the family of test sections $\{y\mapsto v_{\varphi(y)}:v\in\ell_\X\mathscr M\}$ is a strong Banach bundle on $(\Y,\bar\Sigma_\Y,\{\varnothing\})$, it is a fortiori a strong Banach bundle over $(\Y,\bar\Sigma_\Y,\mathcal N_{\mu_\Y})$, where $\mathcal N_{\mu_\Y}\subset\bar\Sigma_\Y$ is the collection of $\mu_\Y$-negligible sets. 

Hence $L^\infty_{\sf str}(\ell_\X{\mathscr M}_{\varphi(\star)};\bar\Sigma_\Y,\mathcal N_{\mu_\Y})\cong L^\infty_{\sf str}(\ell_\X{\mathscr M}_{\varphi(\star)};\mu_\Y)$ is well defined. We shall denote by $\pi_{\mu_\Y}:L^\infty_{\sf str}(\ell_\X{\mathscr M}_{\varphi(\star)};\bar\Sigma_\Y,\{\varnothing\})\to  L^\infty_{\sf str}(\ell_\X{\mathscr M}_{\varphi(\star)};\mu_\Y)$ the projection map corresponding to the equivalence relation $\sim_{\mu_\Y}$ defined as: $v_\cdot\sim_{\mu_\Y} w_\cdot$ if and only if $\mu_\Y(\{y:\|v_y-w_y\|_y>0\})=0$.

With this said, we have:
\begin{theorem}[Pullback of modules]With the same assumptions and notation as in Section \ref{se:mss2} and above, we have:
\begin{equation}
\label{eq:isopb}
\Big(\,L^\infty_{\sf str}\big(\ell_\X\mathscr M_{\varphi(\star)};\mu_\Y\big)\,,\,\pi_{\mu_\Y}\circ{\rm Pb}\circ\ell_\X\,\Big)\ \cong\ \big(\varphi^*\mathscr M,\varphi^*\big),
\end{equation}
where ${\rm Pb}:\ell_\X\mathscr M\to L^\infty_{\sf str}(\ell_\X\mathscr M_{\varphi(\star)};\bar\Sigma_\Y,\{\varnothing\})$ is  as in Proposition \ref{prop:lift_and_pullback_mod} above.
\end{theorem}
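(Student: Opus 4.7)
The plan is to invoke the uniqueness part of Theorem/Definition \ref{thm:defpb2}: it suffices to check that the space $L^\infty_{\sf str}(\ell_\X\mathscr M_{\varphi(\star)};\mu_\Y)$ is an $L^\infty(\mu_\Y)$-normed module (which follows from the general construction of $L^\infty$-sections of a strong Banach bundle in Section \ref{se:sbb}) and that the map $T\coloneqq \pi_{\mu_\Y}\circ{\rm Pb}\circ\ell_\X\colon\mathscr M\to L^\infty_{\sf str}(\ell_\X\mathscr M_{\varphi(\star)};\mu_\Y)$ is $\R$-linear, satisfies the norm identity $|T(v)|=|v|\circ\varphi$, and has image generating the target. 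Linearity is immediate since each factor in the composition is $\R$-linear.

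For the pointwise-norm identity I would trace the constructions. The section ${\rm Pb}(\ell_\X v)$ evaluated at $y\in\Y$ equals $(\ell_\X v)_{\varphi(y)}$, whose norm in the fibre $\ell_\X\mathscr M_{\varphi(y)}$ is $|\ell_\X v|(\varphi(y))$ by \eqref{eq:normafibra}. Theorem \ref{thm:lift_mod}(i) then rewrites this as $\ell_\X(|v|)(\varphi(y))$, and the compatibility \eqref{eq:compat_fcs} identifies it with $\ell_\Y(|v|\circ\varphi)(y)$. Applying $\pi_{\mu_\Y}$ and using $\pi_{\mu_\Y}\circ\ell_\Y=\mathrm{id}_{L^\infty(\mu_\Y)}$ yields $|T(v)|=|v|\circ\varphi$ in $L^\infty(\mu_\Y)$.

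For the generation condition I would chain three facts. First, $\ell_\X(\mathscr M)$ generates $\ell_\X\mathscr M$ by Theorem \ref{thm:lift_mod}(ii). Second, Proposition \ref{prop:lift_and_pullback_mod} identifies $L^\infty_{\sf str}(\ell_\X\mathscr M_{\varphi(\star)};\bar\Sigma_\Y,\{\varnothing\})$ with $\varphi^{*}(\ell_\X\mathscr M)$, with ${\rm Pb}$ playing the role of $\varphi^{*}$; in particular, under this identification ${\rm Pb}(f\cdot u)=(f\circ\varphi)\cdot {\rm Pb}(u)$ for $f\in\mathcal L^\infty(\bar\Sigma_\X)$ and $u\in\ell_\X\mathscr M$, and ${\rm Pb}$ is a pointwise-norm isometry. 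Combining these, approximations of any $u\in\ell_\X\mathscr M$ by glueings of elements of $\ell_\X(\mathscr M)$ are transported by ${\rm Pb}$ into glueings of elements of $\{{\rm Pb}(\ell_\X v):v\in\mathscr M\}$, proving that the latter family generates $L^\infty_{\sf str}(\ell_\X\mathscr M_{\varphi(\star)};\bar\Sigma_\Y,\{\varnothing\})$. Third, $\pi_{\mu_\Y}$ is surjective onto $L^\infty_{\sf str}(\ell_\X\mathscr M_{\varphi(\star)};\mu_\Y)$ at the module level (every $\mu_\Y$-strongly-measurable section agrees, up to a $\mu_\Y$-null set, with a $\{\varnothing\}$-strongly-measurable one) and intertwines glueings on both sides, so generation descends to the quotient.

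The main obstacle I foresee is the second step of the chain: one must verify that a glueing of elements $\varphi^{*}(u_n)$ with $u_n\in\ell_\X\mathscr M$ can be approximated in the $L^\infty$ pointwise norm by a single glueing of elements of the form $\varphi^{*}(\ell_\X v)$, which requires a diagonal construction exploiting the uniform pointwise bound on the glueing, the norm-preserving character of ${\rm Pb}$, and the compatibility between $L^\infty$-multiplication and pullback. Once property (ii) is secured and (i) is in hand, the uniqueness clause of Theorem/Definition \ref{thm:defpb2} delivers the required isomorphism \eqref{eq:isopb}.
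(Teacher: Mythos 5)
Your proposal is correct and follows essentially the same route as the paper: one checks that the map $\pi_{\mu_\Y}\circ{\rm Pb}\circ\ell_\X$ satisfies the pointwise-norm identity $|T(v)|=|v|\circ\varphi$ and that its image generates $L^\infty_{\sf str}(\ell_\X\mathscr M_{\varphi(\star)};\mu_\Y)$, and then invokes the uniqueness part of Theorem/Definition \ref{thm:defpb2}. Your derivation of the norm identity through $\ell_\Y$ and \eqref{eq:compat_fcs} is a harmless variant of the paper's direct computation $|{\rm Pb}(\ell_\X(v))|=|\ell_\X(v)|\circ\varphi=|v|\circ\varphi$ $\mu_\Y$-a.e., and your generation chain (the lifted elements generate $\ell_\X\mathscr M$, their pullbacks generate the $\{\varnothing\}$-section module by Proposition \ref{prop:lift_and_pullback_mod}, and generation descends through $\pi_{\mu_\Y}$) is exactly the argument the paper states more tersely.
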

\begin{proof} For any $v\in\mathscr M $ we have
\[
\begin{split}
|\pi_{\mu_\Y}({\rm Pb}(\ell_\X(v)))|=|{\rm Pb}(\ell_\X(v))|=|\ell_\X(v)|\circ\varphi=|v|\circ\varphi\qquad\mu_\Y-a.e..
\end{split}
\]
Also, elements of the form $\ell_\X(v)$ with $v\in\mathscr M$ generate $\ell_\X\mathscr M$ (by Theorem \ref{thm:lift_mod}) and thus elements of the form ${\rm Pb}(\ell_\X(v))$ with $v\in\mathscr M$ generate $L^\infty_{\sf str}(\ell_\X\mathscr M_{\varphi(\star)};\bar \Sigma_\Y,\{\varnothing\})$ (by Proposition \ref{prop:lift_and_pullback_mod} above). It follows that elements of the form  $\pi_{\mu_\Y}({\rm Pb}(\ell_\X(v)))$ with $v\in\mathscr M$ generate $L^\infty_{\sf str}(\ell_\X\mathscr M_{\varphi(\star)};\mu_\Y)$, so the conclusion follows from the uniqueness part of Theorem \ref{thm:defpb2}.
\end{proof}
We notice also that under the same assumptions of this last theorem the map $\varphi^*(\ell_\X(v))\mapsto\ell_\Y(\varphi^* v)$ uniquely extends to an $\mathcal L^\infty(\bar\Sigma_\Y)$-linear continuous map from $\varphi^*(\ell_\X\mathscr M)$ to $\ell_\Y(\varphi^*\mathscr M)$ and such extension is an isomorphism. In other words, the diagram
\begin{equation}\label{eq:pbcomm}\begin{tikzcd}
\ell_\Y(\varphi^*\mathscr M)\sim\varphi^*(\ell_\X\mathscr M) & \ell_\X\mathscr M\arrow[l,swap,"\varphi^*"]\\
\varphi^*\mathscr M\arrow[u,"\ell_\Y"] &\mathscr M \arrow[u,swap,"\ell_\X"]  \arrow[l,"\varphi^*"] 
\end{tikzcd}\end{equation}
commutes.

\subsubsection{The dual}

Here we study how to interpret the dual of a module as space of sections of a weak Banach bundle. We shall again rely on the concept of lifting of module, but, unlike the discussion for the pullback, we will not describe the dual of the lifting (whose description in terms of sections seems hard to achieve) and will instead directly proceed with the description of the dual of $\mathscr M$.

\bigskip

We start with some definitions. Let $(\X,\Sigma,\mathcal N)$ be an enhanced measurable space and $\mathscr M,\mathscr N$ two $L^\infty(\Sigma,\mathcal N)$-normed modules.

A \emph{morphism} from $\mathscr M$ to $\mathscr N$ is a map that is continuous and  $L^\infty(\Sigma,\mathcal N)$-linear. In general, to such a morphism $T$ we cannot associate a notion of `pointwise norm'. This is possible, though, in two key circumstances:
\begin{itemize}
\item[i)] If $\mathcal N=\mathcal N_\mu$ is the collection of $\mu$-negligible sets for some given $\sigma$-finite measure $\mu$. In this case we define $|T|\in L^\infty(\mu)$ as
\begin{equation}
\label{eq:normessup}
|T|:=\underset{\genfrac{}{}{0pt}{2}{v\in \mathscr M:}{\|v\|_{\mathscr M}\leq 1}}{\rm ess\,sup\,} |T(v)|,
\end{equation}
where the essential supremum is intended w.r.t.\ $\mu$.
\item[ii)] If $\mathcal N=\{\varnothing\}$. In this case we define $|T|:\X\to[0,\infty)$ as
\begin{equation}
\label{eq:normpunt}
|T|(x):=\sup_{\genfrac{}{}{0pt}{2}{v\in \mathscr M:}{\|v\|_{\mathscr M}\leq 1}}|T(v)|(x).
\end{equation}
\end{itemize}
Notice that in the latter case $|T|$ is not necessarily measurable. The next proposition shows anyway that this is the case when we `lift' a morphism:

\begin{proposition}[Lifting of homomorphisms]\label{thm:lift_hom}
Let \((\X,\Sigma,\mu)\) be a $\sigma$-finite measure space and \(\ell:\Sigma\to\bar\Sigma\) a lifting of \(\mu\), $\bar\Sigma$ being the completion of $\Sigma$. Let \(T\colon\mathscr M\to\mathscr N\) be
a morphism between two \(L^\infty(\mu)\)-normed modules \(\mathscr M\) and \(\mathscr N\). Then there is a unique morphism
\(\ell T\colon\ell\mathscr M\to\ell\mathscr N\) of  \(\mathcal L^\infty(\bar \Sigma)\)-normed modules such that
\begin{equation}\label{eq:lift_hom}\begin{tikzcd}
\ell\mathscr M \arrow[r,"\ell T"] & \ell\mathscr N\\
\mathscr M \arrow[r,"T"] \arrow[u,"\ell"] &\mathscr N \arrow[u,swap,"\ell"] 
\end{tikzcd}\end{equation}
is a commutative diagram. Moreover, it holds that
\begin{equation}\label{eq:ptwse_norm_lift_Phi}
|\ell T|=\ell(|T|)\in\mathcal L^\infty(\bar\Sigma).
\end{equation}
\end{proposition}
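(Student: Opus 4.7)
The plan is to build $\ell T$ on the generating set $\{\ell v:v\in\mathscr M\}\subset\ell\mathscr M$ by the forced rule $\ell T(\ell v):=\ell(Tv)$ and then extend it to all of $\ell\mathscr M$ via glueing and continuity; uniqueness will be automatic from the generating property of Theorem~\ref{thm:lift_mod}. The central enabling tool is the standard inequality $|T(v)|\leq|T|\cdot|v|$ holding $\mu$-a.e.\ for every $v\in\mathscr M$ (a direct consequence of the essential-supremum definition of $|T|$ in~\eqref{eq:normessup}), which upon applying the lifting and using $|\ell(\cdot)|=\ell(|\cdot|)$ together with the product rule~\eqref{eq:prodlift} becomes
\[
|\ell(Tv)|=\ell(|Tv|)\leq\ell(|T|\cdot|v|)=\ell(|T|)\cdot|\ell v|\qquad\text{pointwise on }\X,\ \forall v\in\mathscr M.
\]

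For a glued element $u=\sum_{n}\1_{E_n}\ell(v_n)\in\mathscr G\big(\{\ell v:v\in\mathscr M\}\big)$ I would set $\ell T(u):=\sum_n\1_{E_n}\ell(Tv_n)$; this defines a legitimate element of $\ell\mathscr N$ via its own glueing property, since the displayed bound gives the uniform estimate $\|\1_{E_n}\ell(Tv_n)\|\leq\||T|\|_{L^\infty(\mu)}\|\1_{E_n}\ell(v_n)\|$. Well-definedness reduces, after passing to a common refinement, to the implication $\1_E\ell(v-w)=0\Rightarrow\1_E\ell(T(v-w))=0$, which follows at once from the displayed bound applied to $v-w$. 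The same bound gives the pointwise estimate $|\ell T(u)|\leq\ell(|T|)|u|$ on the dense subset $\mathscr G(\{\ell v\})$, hence the Lipschitz estimate $\|\ell T(u)\|\leq\||T|\|_{L^\infty(\mu)}\|u\|$, so $\ell T$ extends uniquely by continuity to $\ell\mathscr M$, the extension inheriting $\mathcal L^\infty(\bar\Sigma)$-linearity (clear on simple combinations and preserved under density) and making~\eqref{eq:lift_hom} commute by construction. Uniqueness follows because any competitor $S$ must agree with $\ell T$ on $\{\ell v\}$, hence on $\mathscr G(\{\ell v\})$ by $\mathcal L^\infty(\bar\Sigma)$-linearity, hence on $\ell\mathscr M$ by continuity and density.

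For~\eqref{eq:ptwse_norm_lift_Phi}, the inequality $|\ell T|\leq\ell(|T|)$ is immediate from the extended pointwise bound $|\ell T(u)|\leq\ell(|T|)|u|\leq\ell(|T|)$ for $\|u\|\leq 1$. The reverse direction is the more delicate step and I would handle it by a glueing trick: essential-supremum theory yields a sequence $(v_n)\subset\mathscr M$ with $\|v_n\|_{\mathscr M}\leq 1$ and $|T|=\sup_n|T(v_n)|$ $\mu$-a.e.; for fixed $\eps>0$ one disjointifies the sets $\{|T(v_n)|>|T|-\eps\}$ into a measurable partition $(A_n)$ of a full-measure set, then defines $w:=\sum_n\1_{A_n}^\mu v_n\in\mathscr M$ via glueing. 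By $L^\infty(\mu)$-linearity of $T$ one has $\|w\|_{\mathscr M}\leq 1$ and $|Tw|\geq|T|-\eps$ $\mu$-a.e.; applying the lifting yields $\|\ell w\|\leq 1$ together with $|\ell T(\ell w)|=\ell(|Tw|)\geq\ell(|T|)-\eps$ pointwise, hence $|\ell T|\geq\ell(|T|)-\eps$, and letting $\eps\to 0$ closes the proof. The main obstacle is the well-definedness of $\ell T$ on glueings: without the pointwise inequality $|\ell(Tv)|\leq\ell(|T|)|\ell v|$ there would be no way to control values of $\ell T$ across overlapping representations of the same element, and every other step (continuity, linearity, uniqueness, and both halves of the norm identity) flows from that single estimate.
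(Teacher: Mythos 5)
Your proof is correct and follows essentially the same route as the paper: defining $\ell T$ on glueings of lifted elements via the forced formula, deriving the key pointwise bound $|\ell(Tv)|\leq\ell(|T|)\,|\ell v|$, extending by continuity and density, and obtaining the reverse inequality $|\ell T|\geq\ell(|T|)$ through a glueing-constructed near-maximizer $w$ with $\|w\|_{\mathscr M}\leq 1$ and $|Tw|\geq|T|-\eps$, exactly as the paper sketches. The only blemish is the citation of \eqref{eq:prodlift} (a statement about lifting products $fv$ of functions with module elements) where what you actually use is the multiplicativity $\ell(fg)=\ell(f)\ell(g)$ of the function lifting together with $|\ell(v)|=\ell(|v|)$; this is immaterial to the argument.
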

\begin{proof} By $\mathcal L^\infty(\bar\Sigma)$-linearity and the well posedness of the glueing it is clear that we are forced to define 
\[
(\ell T)(\bar v)\coloneqq\sum_{n\in\N}\1_{E_n}\cdot\ell\big(T(v_n)\big)\in\ell\mathscr N,
\quad\text{ for every }\bar v=\sum_{n\in\N}\1_{E_n}\cdot\ell(v_n)\in\ell\mathscr M,
\]
with $(v_n)\subset\mathscr M$ uniformly bounded and $(E_n)\subset\Sigma$ partition of $\X$. From
\[\begin{split}
\big|(\ell T)(\bar v)\big|&=\sum_{n\in\N}\1_{E_n}\big|\ell\big( T(v_n)\big)\big|=
\sum_{n\in\N}\1_{E_n}\,\ell\big(| T(v_n)|\big)\leq\sum_{n\in\N}\1_{E_n}\,\ell\big(| T||v_n|\big)\\
&=\ell(| T|)\sum_{n\in\N}\1_{E_n}\,\ell(|v_n|)=\ell(| T|)\sum_{n\in\N}\1_{E_n}\big|\ell(v_n)\big|=\ell(| T|)|\bar v|,
\end{split}\]
it follows that   the resulting operator \(\ell T\) is well-defined, linear, and continuous. Since the class of $\bar v\in \ell\mathscr M$ as  above is dense in $\ell\mathscr M$, there is a unique continuous extension, still denoted by \(\ell T\), to an operator from \(\ell\mathscr M\) to \(\ell \mathscr N\) and by the above it satisfies
\begin{equation}\label{eq:ineq_lift_Phi}
\big|(\ell T)(\bar v)\big|\leq\ell(| T|)|\bar v|,\quad\text{ for every }\bar v\in\ell\mathscr M.
\end{equation}
It is clear that $\ell T$ commutes with products of characteristics, thus by linearity and continuity it is immediate to verify that it is $\mathcal L^\infty(\bar\Sigma)$-linear. Also, from  \eqref{eq:ineq_lift_Phi} we see that to conclude that \eqref{eq:ptwse_norm_lift_Phi} holds it is enough to show that $|\ell T|\geq \ell(|T|)$. 

To prove this fix $\eps>0$ and use a glueing argument to find $v\in\mathscr M$ with $\|v\|_{\mathscr M}\leq 1$ so that \(\big| T(v)\big|\geq| T|-\eps\) holds \(\mu\)-a.e.\ on \(\X\). Thus we have 
\[\big|(\ell T)\big(\ell(v)\big)\big|=\big|\ell\big( T(v)\big)\big|=\ell\big(| T(v)|\big)\geq\ell(| T|)-\eps
\]
and since this holds at every point of $\X$ and $\|\ell(v)\|_{\ell\mathscr M}=\sup|\ell(v)|=\sup\ell(|v|)\leq 1$, by the arbitrariness of $\eps>0$ we conclude.
\end{proof}
In this part of the paper we shall work with the following notion of duality:
\begin{definition}[Dual of an \(L^\infty(\mu)\)-normed module]\label{def:dual_mod}
Let \((\X,\Sigma,\mu)\) be a $\sigma$-finite measure space and \(\mathscr M\) an \(L^\infty(\mu)\)-normed module. Then the \emph{module dual} \(\mathscr M^*\)
of \(\mathscr M\) is defined as the family of all the morphisms of \(L^\infty(\mu)\)-normed module from \(\mathscr M\) to \(L^\infty(\mu)\).  
\end{definition}
One can readily check that the dual space \(\mathscr M^*\) is an \(L^\infty(\mu)\)-normed module if endowed with the natural pointwise operations
and with the pointwise norm \(|\cdot|\) defined as in \eqref{eq:normessup}. We point out that this notion of module dual of
an \(L^\infty(\mu)\)-normed module, which is convenient for our purposes in this chapter, is different from the original one introduced in \cite[Definition 1.2.6]{Gigli14} and used in Section \ref{se:sep}. In particular, it has little to do with the Banach dual of $\mathscr M$.

The definition above has been chosen to ensure that the dual of an $L^\infty$-normed module is still $L^\infty$-normed (and not $L^1$-normed as in \cite{Gigli14}): remaining in the realm of $L^\infty$-normed modules is important if one wants to apply the lifting theory.

\begin{remark}[Relation with \(L^p\)-normed \(L^\infty\)-modules]\label{rmk:consist_L_p}{\rm Even though the above concept of dual differs from the one in  \cite{Gigli14}, it is still strictly related to it.

For $p\in[1,\infty)$ and $\mathscr M$ an $L^\infty(\mu)$-normed module, let  \({\sf C}_p(\mathscr M)\) be the completion of $\mathscr M$ w.r.t.\ the norm $v\mapsto \||v|\|_{L^p(\mu)}$ (more precisely: one should  restrict to the space of $v$'s with finite norm before completing). Then  \({\sf C}_p(\mathscr M)\)  has a natural structure of \(L^p(\mu)\)-normed \(L^\infty(\mu)\)-module. 

Conversely, given an \(L^p(\mu)\)-normed \(L^\infty(\mu)\)-module \(\mathscr M_p\), it holds that the `restricted' space made of those $v$'s in the $L^0$-completion of $\mathscr M_p$ (see \cite{Gigli14} for this concept)  with  $|v|\in L^\infty(\mu)$  is trivially an \(L^\infty(\mu)\)-normed module.  

It is then not hard to see that for any $L^\infty(\mu)$-normed module $\mathscr M$ we have
\begin{equation}\label{eq:consist_dual}
\mathscr M^*\cong{\sf R}\big({\sf C}_p(\mathscr M)^*\big),
\end{equation}
where the dual on the left is as in Definition \ref{def:dual_mod} and that on the right as in Definition \ref{def:dualm}.

We also point out that   \({\sf C}_p\) and \({\sf R}\) can be made into
functors which are equivalences of categories, one the inverse of the other. These claims can be proven by adapting the arguments in \cite[Appendix B]{LP18}.
\fr}\end{remark}
We are now ready to describe the dual of $\mathscr M$ as space of sections. Recall that for a given lifting $\ell_\X$ of $\mu_\X$, the collection $\ell_\X\mathscr M_\star$ of fibres of $\ell_\X\mathscr M$ has been introduced in Section \ref{se:basem2}, thus we can also consider the collection $\ell_\X\mathscr M'_\star$ of their duals. Recalling  \eqref{eq:fibrelift}, we also see that the collection $\{x\mapsto v_x\in \ell_\X\mathscr M_x:v\in\ell_\X\mathscr M\}$ is a vector space. In particular, it satisfies the requirement $(a)$ in Definition \ref{def:wbb} and therefore  $\ell_\X\mathscr M'_\star$ with these test vectors is a weak Banach bundle on $(\X,\bar\Sigma_\X,\mathcal N_{\mu_\X})$, where $\mathcal N_{\mu_\X}\subset\bar\Sigma_\X$ is the collection of $\mu_\X$-negligible sets.

According to the discussion in Section \ref{se:wbb} we can therefore consider the space $\mathscr S_{\sf weak}(\ell_\X\mathscr M'_\star)$ of weakly measurable sections, its quotient  space $\mathscr S_{\sf weak}(\ell_\X\mathscr M'_\star)/\sim_{\sf weak}$ and the associated projection map, that we shall denote by $\pi_{\mu_\X}$. Then $L^\infty_{\sf weak}(\ell_\X{\mathscr M}'_\star;\mu_\X)\subset \mathscr S_{\sf weak}(\ell_\X\mathscr M'_\star)/\sim_{\sf weak}$ is the subspace of essentially bounded sections.

With this said we have:
\begin{theorem}\label{thm:dual2}
Let \((\X,\Sigma_\X,\mu_\X)\) be a $\sigma$-finite measure space, \(\ell_\X:\Sigma_\X\to\bar\Sigma_\X\) a lifting of \(\mu_\X\) and $\mathscr M$ an $L^\infty(\mu_\X)$-normed module.

Then $L^\infty_{\sf weak}(\ell_\X{\mathscr M}'_\star;\mu_\X)$ is an $L^\infty(\mu_\X)$-normed module and the map
\[
\begin{array}{rccc}
{\sf I}:&L^\infty_{\sf weak}(\ell_\X{\mathscr M}'_\star;\mu_\X)\qquad&\to&\qquad\mathscr M^*\\
&\pi_{\mu_\X}(\omega_\cdot) \qquad&\mapsto&\quad\big(v\quad\mapsto\quad [\langle \omega_\cdot,\ell_\X(v)_\cdot \rangle]\big)
\end{array}
\]
is an isomorphism of $L^\infty(\mu_\X)$-normed modules (in the above, in writing $\ell_\X(v)_\cdot$ we are thinking of $\ell_\X(v)\in\ell_\X\mathscr M$ as an element of $L^\infty_{\sf str}(\ell_\X{\mathscr M}_\star;\bar\Sigma_\X,\{\varnothing\})$ as in \eqref{eq:fibrelift} -- also, by $[f]$ we denote the equivalence class up to $\mu_\X$-a.e.\ equality of the measurable function $f$).
\end{theorem}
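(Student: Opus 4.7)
The plan is to mirror the arguments of Theorem \ref{prop:chardual} and Theorem \ref{thm:pbd1}, but with the lifting machinery playing the role there served by Doob's selection: on the primal side the test section $x\mapsto\ell_\X(v)_x$ (via \eqref{eq:fibrelift}) replaces ${\rm Rep}(v)$, and on the dual side the lifted morphism $\ell_\X L\colon\ell_\X\mathscr M\to\ell_\X L^\infty(\mu_\X)$ furnished by Proposition \ref{thm:lift_hom} will supply the fiberwise functional representing $L$.

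I would first dispatch the easy direction. Given a weakly measurable $\omega_\cdot$, the definition of weak measurability, together with \eqref{eq:dualnorm} and the identity $\|\ell_\X(v)_x\|_x=\ell_\X(|v|)(x)$, shows that $x\mapsto\langle\omega_x,\ell_\X(v)_x\rangle$ is measurable and dominated $\mu_\X$-a.e.\ by $|\omega|\cdot|v|$, so ${\sf I}(\omega)(v)\in L^\infty(\mu_\X)$; independence of the $\sim_{\sf weak}$-representative is built into the equivalence relation, while $L^\infty(\mu_\X)$-linearity of the resulting map follows from \eqref{eq:prodlift}. This yields ${\sf I}(\omega)\in\mathscr M^*$ with $|{\sf I}(\omega)|\leq|\omega|$ $\mu_\X$-a.e.

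For surjectivity, fix $L\in\mathscr M^*$ and consider its lift, noting that $|\ell_\X L|=\ell_\X(|L|)$. The crucial fact is that for every $x\in\X$ and every $w\in\ell_\X\mathscr M$, the value $(\ell_\X L)(w)(x)$ depends only on $w_x$: indeed, by the $\mathcal L^\infty(\bar\Sigma_\X)$-linearity of $\ell_\X L$ one has $(\ell_\X L)(w_x)=\1_{\{x\}}\cdot(\ell_\X L)(w)$ when $\{x\}\in\bar\Sigma_\X$, and $(\ell_\X L)(w_x)=\1_{A_i}\cdot(\ell_\X L)(w)$ when $x\in A_i$; evaluating at $x$ gives $(\ell_\X L)(w)(x)=(\ell_\X L)(w_x)(x)$, and Lemma \ref{le:la}(ii) guarantees constancy of the right-hand side along $A_i$. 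This lets me set $\bar\omega(x)(w_x)\coloneqq(\ell_\X L)(w)(x)$, defining a linear functional on $\ell_\X\mathscr M_x$ which, combining $|(\ell_\X L)(w)|\leq\ell_\X(|L|)|w|$ with \eqref{eq:normafibra}, has operator norm at most $\ell_\X(|L|)(x)$. Weak measurability of $\bar\omega$ is built in, since $x\mapsto\langle\bar\omega(x),w_x\rangle=(\ell_\X L)(w)(x)$ is $\bar\Sigma_\X$-measurable. Setting $\omega\coloneqq\pi_{\mu_\X}(\bar\omega)$, the diagram \eqref{eq:lift_hom} and Lemma \ref{lem:quotient_lift} give ${\sf I}(\omega)(v)=[\ell_\X(L(v))]=L(v)$, hence ${\sf I}(\omega)=L$, and $|\omega|\leq|L|$ $\mu_\X$-a.e.

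To upgrade to $|{\sf I}(\omega)|=|\omega|$ I would use injectivity of ${\sf I}$ on the quotient: if $\langle\omega,\ell_\X(v)_\cdot\rangle=0$ $\mu_\X$-a.e.\ for every $v\in\mathscr M$, then by $L^\infty(\mu_\X)$-linearity and the glueing property the same holds for every $w\in\mathscr G(\ell_\X(\mathscr M))$, and by uniform continuity of the pairing in $w$ (with constant $\||\omega|\|_{L^\infty(\mu_\X)}$) and density of $\mathscr G(\ell_\X(\mathscr M))$ in $\ell_\X\mathscr M$ granted by Theorem \ref{thm:lift_mod}, for every $w\in\ell_\X\mathscr M$, whence $\omega\sim_{\sf weak}0$. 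Matching any $\omega$ with the $\omega'$ built from $L={\sf I}(\omega)$ in the previous step then forces $|\omega|=|\omega'|\leq|L|=|{\sf I}(\omega)|$. Completeness of $L^\infty_{\sf weak}(\ell_\X\mathscr M'_\star;\mu_\X)$, the only remaining nontrivial axiom for the $L^\infty(\mu_\X)$-normed module structure, then follows from the isometric bijection with the complete space $\mathscr M^*$. The main obstacle is the well-definedness of $\bar\omega(x)$, which crucially requires the atomic analysis of Lemma \ref{le:la} combined with the $\mathcal L^\infty(\bar\Sigma_\X)$-linearity of $\ell_\X L$.
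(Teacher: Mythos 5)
Your proof is correct and follows essentially the same route as the paper's: lift $L$ to $\ell_\X L$ via Proposition \ref{thm:lift_hom}, use its $\mathcal L^\infty(\bar\Sigma_\X)$-linearity together with the atomic analysis of Lemma \ref{le:la} to define the fibrewise functionals $\bar\omega(x)$ satisfying $\langle\bar\omega(x),\ell_\X(v)_x\rangle=\ell_\X(L(v))(x)$, and then pass to the quotient to get a preimage of $L$ with $|\omega|\leq|L|$. Your explicit injectivity argument (glueing plus density of $\mathscr G(\ell_\X(\mathscr M))$ in $\ell_\X\mathscr M$) to extend the bound $|{\sf I}(\omega)|\geq|\omega|$ to \emph{every} $\omega$ is a welcome extra detail that the paper leaves implicit, but it does not change the approach.
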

\begin{proof} The very definition of $L^\infty_{\sf weak}(\ell_\X{\mathscr M}'_\star;\mu_\X)$ and of pointwise norm on it ensure that ${\sf I}$ is well defined, $L^\infty(\mu_\X)$-linear and that $|{\sf I}(\omega)|\leq |\omega|$ $\mu_\X$-a.e.\ for every $\omega\in L^\infty_{\sf weak}(\ell_\X{\mathscr M}'_\star;\mu_\X)$.

To conclude it is therefore sufficient to show that it is surjective and that  $|{\sf I}(\omega)|\geq |\omega|$ holds  $\mu_\X$-a.e.\ for every $\omega\in L^\infty_{\sf weak}(\ell_\X{\mathscr M}'_\star;\mu_\X)$ (notice that this shows that $L^\infty_{\sf weak}(\ell_\X{\mathscr M}'_\star;\mu_\X)$ is isometric to $\mathscr M^*$ and in particular complete, which is the only non-trivial claim in saying that $L^\infty_{\sf weak}(\ell_\X{\mathscr M}'_\star;\mu_\X)$ is an $L^\infty(\mu_\X)$-normed module).

Thus let \(L\in\mathscr M^*\) and $\ell_\X L:\ell_\X\mathscr M\to\ell_\X  L^\infty(\mu_\X)$ be its lifting as in Proposition \ref{thm:lift_hom} above. Since $\ell_\X L$ commutes with the product by $\1_{\{x\}}$ and $\1_{A_i}$ it induces a linear and continuous map, that we shall denote by $\ell_\X L_x$, from $\ell_\X\mathscr M_x$ to $\ell_\X  L^\infty(\mu_\X)_x\cong \R$, having used the isomorphism in Example \ref{ex:fibreR}. In other words,  $\bar\omega(x) \coloneqq\ell_\X L_x$ is an element of $(\ell_\X\mathscr M_x)'$ and 
for any  $v\in \mathscr M$ and \(x\in\X\) we have
\begin{equation}
\label{eq:quasifatto}
\langle\bar\omega(x),\,\ell_\X(v)_x\rangle=\ell_\X L_x(\ell_\X(v)_x)\stackrel{\eqref{eq:fibreR}}=(\ell_\X L)(\ell_\X(v) )(x)\overset{\eqref{eq:lift_hom}}=\ell_\X\big(L(v)\big)(x).
\end{equation}
This proves that  $x\mapsto \langle\bar\omega(x),\,\ell_\X(v)_x\rangle$ is measurable and thus, by the arbitrariness of $v\in\mathscr M$, that $\bar\omega\in \mathscr S_{\sf weak}(\ell_\X{\mathscr M}'_\star)$. Hence we can define
\(\omega\coloneqq\pi_{\mu_\X}(\bar\omega)\in  \mathscr S_{\sf weak}(\ell_\X{\mathscr M}'_\star)/\sim_{\sf weak}\). The inequality
\begin{equation}
\label{eq:fin}
|\omega|\stackrel{\eqref{eq:defnw}}=\underset{\genfrac{}{}{0pt}{2}{v\in \mathscr M:}{\|v\|_{\mathscr M}\leq 1}}{\rm ess\,sup\,}\langle\bar \omega, \ell_\X(v)_\cdot\rangle\stackrel{\eqref{eq:quasifatto}}=\underset{\genfrac{}{}{0pt}{2}{v\in \mathscr M:}{\|v\|_{\mathscr M}\leq 1}}{\rm ess\,sup\,}\ell_\X\big(L(v)\big)=\underset{\genfrac{}{}{0pt}{2}{v\in \mathscr M:}{\|v\|_{\mathscr M}\leq 1}}{\rm ess\,sup\,}L(v)\leq|L|,
\end{equation}
valid $\mu_\X$-a.e.\ shows that $|\omega|\in L^\infty(\mu_\X)$, and thus that $\omega\in L^\infty_{\sf weak}(\ell_\X{\mathscr M}'_\star;\mu_\X)$. Also, unwrapping the definitions it is  immediate to check that \(\langle {\sf I}(\omega),v\rangle=L(v)\in L^\infty(\mu_\X)\) for every \(v\in\mathscr M\), i.e.\ that \({\sf I}(\omega)=L\). This proves that \({\sf I}\) is surjective and then \eqref{eq:fin} shows that $|\omega|\leq|{{\sf I}}(\omega)|$, as desired.
\end{proof}
We point out that the analogue of Theorem \ref{thm:dual2} for the space of \(L^p\)-sections of a \emph{measurable Banach \(\B\)-bundle} was recently
obtained in \cite[Theorem 3.10]{LPV22}.

\subsubsection{The dual of the pullback}

We now turn to the description of the dual of the pullback in terms of sections: as in the last paragraph, since we are describing a dual module, the concept of weak Banach bundle is the relevant one.

\medskip

We shall make use of the following property of pullbacks: it is the `$L^\infty$-analogue' of Proposition \ref{prop:univ_prop_pullback-pol} and its proof will be omitted.

\begin{proposition}\label{prop:univ_prop_pullback-2} With the notation and assumptions as in Section \ref{se:mss2}, the following holds.

Suppose that \(T\colon\mathscr M\to L^\infty(\mu_\Y)\) is a linear operator for which there exists a function \(g\in L^\infty(\mu_\Y)\) such that \(|T(v)|\leq g|v|\circ\varphi\)
holds for every \(v\in\mathscr M\). 

Then there exists a unique linear and continuous map \(\hat T\colon\varphi^*\mathscr M\to L^\infty(\mu_\Y)\) such that
\[
\hat T(\varphi^*v)=T(v),\quad\text{ for every }v\in\mathscr M.
\]
Moreover, it holds that \(|\hat T(V)|\leq g|V|\) for every \(V\in\varphi^*\mathscr M\).
\end{proposition}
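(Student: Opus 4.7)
The plan is to build $\hat T$ first on the $L^\infty(\mu_\Y)$-linear span (with glueings) of $\{\varphi^*v:v\in\mathscr M\}$, which by property (ii) of Theorem/Definition \ref{thm:defpb2} is dense in $\varphi^*\mathscr M$, and then to extend by continuity. Uniqueness will be clear once the construction is fixed, since any $L^\infty(\mu_\Y)$-linear continuous extension agreeing with $T$ on the generators is forced on every glueing of the form $\sum_n\1_{E_n}^{\mu_\Y}\cdot\varphi^*v_n$.

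First I would work on elements of the form $V=\sum_{n\in\N}\1_{E_n}^{\mu_\Y}\cdot\varphi^*v_n$ with $(E_n)$ a partition of $\Y$ in $\Sigma_\Y$ and $(v_n)\subset\mathscr M$ such that $\sup_n\|\1_{E_n}^{\mu_\Y}\cdot\varphi^*v_n\|<\infty$, and set
\[
\hat T(V)\coloneqq\sum_{n\in\N}\1_{E_n}^{\mu_\Y}\cdot T(v_n)\in L^\infty(\mu_\Y),
\]
the series being well defined by the glueing property in $L^\infty(\mu_\Y)$ together with the hypothesis $|T(v_n)|\leq g\,|v_n|\circ\varphi$, which bounds each summand uniformly by $\|g\|_{L^\infty(\mu_\Y)}\sup_n\||v_n|\circ\varphi\,\1_{E_n}^{\mu_\Y}\|_{L^\infty(\mu_\Y)}$. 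The key step is to verify that $\hat T(V)$ depends only on $V$ and not on the chosen representation: if $V=0$ then by property (i) of Theorem/Definition \ref{thm:defpb2} we have $\1_{E_n}^{\mu_\Y}\,|v_n|\circ\varphi=\1_{E_n}^{\mu_\Y}\,|\varphi^*v_n|=0$ for every $n$, whence the hypothesis on $T$ yields $\1_{E_n}^{\mu_\Y}\,|T(v_n)|\leq g\,\1_{E_n}^{\mu_\Y}\,|v_n|\circ\varphi=0$, so that $\1_{E_n}^{\mu_\Y}\cdot T(v_n)=0$ for every $n$ and the glueing sums to zero. More generally, if two glueings represent the same element, applying this argument to their difference yields well-definedness, and a standard subdivision/refinement argument shows that $\hat T$ is additive and $L^\infty(\mu_\Y)$-linear on its domain.

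Next I would establish the pointwise bound on the domain: for $V=\sum_n\1_{E_n}^{\mu_\Y}\cdot\varphi^*v_n$,
\[
|\hat T(V)|\leq\sum_{n\in\N}\1_{E_n}^{\mu_\Y}\,|T(v_n)|\leq\sum_{n\in\N}\1_{E_n}^{\mu_\Y}\,g\,|v_n|\circ\varphi=g\sum_{n\in\N}\1_{E_n}^{\mu_\Y}\,|\varphi^*v_n|=g|V|,
\]
using property (i) of pullbacks in the last identity. In particular $\|\hat T(V)\|_{L^\infty(\mu_\Y)}\leq\|g\|_{L^\infty(\mu_\Y)}\|V\|$, so $\hat T$ is continuous on a dense subspace of $\varphi^*\mathscr M$ and admits a unique continuous linear extension to all of $\varphi^*\mathscr M$. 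Both the identity $\hat T(\varphi^*v)=T(v)$ and the pointwise bound $|\hat T(V)|\leq g|V|$ pass to the extension by continuity. Uniqueness among continuous $L^\infty(\mu_\Y)$-linear extensions is immediate from the density of the generating set.

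The main obstacle is the well-definedness step: one must show that if two glueings $\sum_n\1_{E_n}^{\mu_\Y}\cdot\varphi^*v_n$ and $\sum_m\1_{F_m}^{\mu_\Y}\cdot\varphi^*w_m$ represent the same element of $\varphi^*\mathscr M$, then the corresponding right-hand sides in $L^\infty(\mu_\Y)$ coincide. This is precisely where the pointwise bound $|T(v)|\leq g\,|v|\circ\varphi$ is essential; indeed, without such a bound the analogous construction would not be well posed, because one could have $\varphi^*v=0$ (equivalently $|v|\circ\varphi=0$) while $T(v)\neq 0$. Everything else is a routine transcription of the proof of Proposition \ref{prop:univ_prop_pullback-pol} with $L^p$ replaced by $L^\infty$, taking advantage of the glueing property of $L^\infty(\mu_\Y)$-normed modules instead of $L^p$-style approximations.
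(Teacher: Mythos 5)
Your proof is correct and is essentially the argument the paper intends: the proof is omitted there precisely because it is the ``$L^\infty$-analogue'' of Proposition \ref{prop:univ_prop_pullback-pol} (in turn a minor modification of \cite[Proposition 1.6.3]{Gigli14}), namely defining $\hat T$ on countable glueings $\sum_n\1_{E_n}^{\mu_\Y}\cdot\varphi^*v_n$ of the generators, using the bound $|T(v)|\le g\,|v|\circ\varphi$ for well-posedness and the pointwise estimate, and then extending by density of $\mathscr G(\{\varphi^*v:v\in\mathscr M\})$. One small remark: the uniqueness must indeed be understood as you state it, i.e.\ within the class of $L^\infty(\mu_\Y)$-linear continuous maps (or, equivalently, of linear continuous maps satisfying the pointwise bound, which forces locality), since the plain linear span of $\{\varphi^*v:v\in\mathscr M\}$ need not be norm-dense in $\varphi^*\mathscr M$, so agreement on the generators alone would not determine a merely linear continuous extension.
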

Let $\mathscr M$ be an $L^\infty(\mu_\X)$-normed module and, as before, $\ell_\X\mathscr M_\star$ the collection of fibres of $\ell_\X\mathscr M$ on $\X$ and  $\ell_\X\mathscr M'_\star$ that of their duals. We shall denote by $\ell_\X\mathscr M_{\varphi(\star)}$, $\ell_\X\mathscr M'_{\varphi(\star)}$ the collections of Banach spaces on $\Y$ assigning to $y\in\Y$ the space $\ell_\X\mathscr M_{\varphi(y)}$,  $\ell_\X\mathscr M'_{\varphi(y)}$ respectively. Finally for $v\in\mathscr M$ we denote by $\ell_\X(v)_{\varphi(\cdot)}\in\mathscr S(\ell_\X\mathscr M_{\varphi(\star)})$ the map $y\mapsto \ell_\X(v)_{\varphi(y)}$ (that is well defined by \eqref{eq:fibrelift}). It is clear that the collection $\{\ell_\X(v)_{\varphi(\cdot)}:v\in\mathscr M\}\subset \mathscr S(\ell_\X\mathscr M_{\varphi(\star)})$ is a vector space. In particular, it satisfies the requirement $(a)$ in Definition \ref{def:wbb} and therefore  $\ell_\X\mathscr M'_{\varphi(\star)}$ with these test vectors is a weak Banach bundle on $(\Y,\bar\Sigma_\Y,\mathcal N_{\mu_\Y})$, where $\mathcal N_{\mu_\Y}\subset\bar\Sigma_\Y$ is the collection of $\mu_\Y$-negligible sets.

According to the discussion in Section \ref{se:wbb} we can thus consider the space $\mathscr S_{\sf weak}(\ell_\X\mathscr M'_{\varphi(\star)})$ of weakly measurable sections, its quotient space $\mathscr S_{\sf weak}(\ell_\X\mathscr M'_{\varphi(\star)})/\sim_{\sf weak}$  and the associated projection map, that we denote by $\pi_{\mu_\Y}$. Then $L^\infty_{\sf weak}(\ell_\X{\mathscr M}'_{\varphi(\star)};\mu_\Y)\subset \mathscr S_{\sf weak}(\ell_\X\mathscr M'_{\varphi(\star)})/\sim_{\sf weak}$ is the subspace of essentially bounded sections.

Now observe that for $\pi_{\mu_\Y}(\omega_\cdot)\in L^\infty_{\sf weak}(\ell_\X{\mathscr M}'_{\varphi(\star)};\mu_\Y)$ and $v\in\mathscr M$ we have
\[
\begin{split}
|\langle \omega_\cdot,\ell_\X(v)_{\varphi(\cdot)}\rangle|\leq |\omega_\cdot|\,|\ell_\X(v)_{\varphi(\cdot)}|= |\omega_\cdot|\,|\ell_\X(v)_{\cdot}|\circ \varphi=|\omega_\cdot|\,| v|\circ \varphi,\qquad\mu_\Y-a.e.,
\end{split}
\]
therefore recalling Proposition \ref{prop:univ_prop_pullback-2} we see that there is a unique element $\mathcal I(\omega_\cdot)$ of $(\varphi^*\mathscr M)^*$ such that 
\[
\langle \mathcal I(\omega_\cdot),\varphi^*v \rangle=\pi_{\mu_\Y}(\langle \omega_\cdot,\ell_\X(v)_{\varphi(\cdot)}\rangle)\qquad\forall v\in\mathscr M
\]
and this element satisfies $|\mathcal I(\omega_\cdot)|\leq |\omega_\cdot|$ $\mu_\Y$-a.e..

We then have:
\begin{theorem}[The dual of the pullback: fibrewise characterization]\label{thm:dpb2}
With the  notation and assumptions as in Section \ref{se:mss2} and above, the space  $L^\infty_{\sf weak}(\ell_\X{\mathscr M}'_{\varphi(\star)};\mu_\Y)$ is an $L^\infty(\mu_\Y)$-normed module and the map
\[
\begin{array}{rccc}
\mathcal I:&L^\infty_{\sf weak}(\ell_\X{\mathscr M}'_{\varphi(\star)};\mu_\Y)\qquad&\to&\qquad(\varphi^*\mathscr M)^*
\end{array}
\]
is an isomorphism of $L^\infty(\mu_\Y)$-normed modules.
\end{theorem}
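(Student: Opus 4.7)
The plan is to establish surjectivity and the reverse pointwise bound $|\omega_\cdot|\le|\mathcal I(\omega_\cdot)|$ $\mu_\Y$-a.e.; combined with the already-noted inequality $|\mathcal I(\omega_\cdot)|\le|\omega_\cdot|$ and the trivial $L^\infty(\mu_\Y)$-linearity of $\mathcal I$, this will exhibit $\mathcal I$ as an isometric isomorphism onto $(\varphi^*\mathscr M)^*$; the completeness (and hence the $L^\infty(\mu_\Y)$-normed module structure) of $L^\infty_{\sf weak}(\ell_\X\mathscr M'_{\varphi(\star)};\mu_\Y)$ will then be inherited from that of the dual space, exactly as in Theorem \ref{thm:dual2}.

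Surjectivity is the substantive content. Given $L\in(\varphi^*\mathscr M)^*$, I first lift it via Proposition \ref{thm:lift_hom} to a morphism $\ell_\Y L\colon\ell_\Y(\varphi^*\mathscr M)\to\ell_\Y L^\infty(\mu_\Y)$ with $|\ell_\Y L|=\ell_\Y(|L|)$, and then transport it across the canonical isomorphism from \eqref{eq:pbcomm} to obtain a morphism $\widehat L\colon\varphi^*(\ell_\X\mathscr M)\to\ell_\Y L^\infty(\mu_\Y)$ satisfying
\[
\widehat L(\varphi^*\ell_\X u)=\ell_\Y(L(\varphi^*u)),\qquad u\in\mathscr M,
\]
and having the same pointwise norm $\ell_\Y(|L|)$. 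For every $y\in\Y$, each element of $\ell_\X\mathscr M_{\varphi(y)}$ is of the form $v_{\varphi(y)}=\1_{\{\varphi(y)\}}\cdot v$ (or with the lifted atom from Lemma \ref{le:la} in place of $\{\varphi(y)\}$) for some $v\in\ell_\X\mathscr M$, and I define
\[
\bar\omega(y)(v_{\varphi(y)})\coloneqq\widehat L(\varphi^*v)(y),
\]
where the right-hand side is interpreted through the evaluation isomorphism of Example \ref{ex:fibreR}. The estimate $|\widehat L(\varphi^*v)|\le\ell_\Y(|L|)\cdot|v|\circ\varphi$, evaluated at $y$, simultaneously yields well-posedness (if $v_{\varphi(y)}=0$ then $|v|(\varphi(y))=0$, forcing the right-hand side to vanish), the linearity of $\bar\omega(y)$ (inherited from $\widehat L$), and the bound $\|\bar\omega(y)\|_{(\ell_\X\mathscr M_{\varphi(y)})'}\le\ell_\Y(|L|)(y)$.

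Weak measurability of $\bar\omega$ together with $\mathcal I(\omega)=L$, for $\omega\coloneqq\pi_{\mu_\Y}(\bar\omega)$, then drop out of the single identity
\[
\langle\bar\omega(y),\ell_\X(u)_{\varphi(y)}\rangle=\widehat L(\varphi^*\ell_\X u)(y)=\ell_\Y\bigl(L(\varphi^*u)\bigr)(y),\qquad u\in\mathscr M,
\]
whose right-hand side is a bona fide measurable function whose $\mu_\Y$-equivalence class is $L(\varphi^*u)$; since $\{\varphi^*u\}_{u\in\mathscr M}$ generates $\varphi^*\mathscr M$ and both $\mathcal I(\omega)$ and $L$ are $L^\infty(\mu_\Y)$-linear and continuous, this forces $\mathcal I(\omega)=L$. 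The bound on $\|\bar\omega(y)\|$ gives $|\omega|\le|L|$ $\mu_\Y$-a.e., so $\omega\in L^\infty_{\sf weak}(\ell_\X\mathscr M'_{\varphi(\star)};\mu_\Y)$. Finally, $\mathcal I$ is injective: $\mathcal I(\omega)=0$ means $\langle\omega,\ell_\X(v)_{\varphi(\cdot)}\rangle=0$ $\mu_\Y$-a.e.\ for every $v\in\mathscr M$, which is precisely $\omega\sim_{\sf weak}0$ by the chosen test vectors. Applying the construction above to an arbitrary $\omega$ with $L\coloneqq\mathcal I(\omega)$ therefore returns the same $\omega$, delivering $|\omega|\le|\mathcal I(\omega)|$ $\mu_\Y$-a.e.

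The main subtlety I expect is the fibre-level definition of $\bar\omega(y)$: one must check that $\widehat L(\varphi^*v)(y)$ depends only on $v_{\varphi(y)}$ and not on the chosen representative $v\in\ell_\X\mathscr M$. This rests on combining the pullback identity $|\varphi^*v|=|v|\circ\varphi$ with the correct use of lifted atoms from Lemma \ref{le:la} (so that the pointwise evaluation at $y$ makes sense even when $\{\varphi(y)\}\notin\bar\Sigma_\X$) and on the pullback-lifting commutativity \eqref{eq:pbcomm}; once these are in hand, the remainder of the argument is a routine adaptation of the proof of Theorem \ref{thm:dual2}, with $\widehat L$ replacing the lifted functional there and Proposition \ref{prop:univ_prop_pullback-2} used implicitly through the universal property of $\varphi^*\mathscr M$.
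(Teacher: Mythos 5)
Your proposal is correct and follows essentially the same route as the paper's proof: lift $L$ via Proposition \ref{thm:lift_hom}, transport it across \eqref{eq:pbcomm} to the pullback of the lifted module, define $\bar\omega(y)$ by fibrewise evaluation (using Example \ref{ex:fibreR} and Lemma \ref{le:la}), and conclude via the identity $\langle\bar\omega(y),\ell_\X(u)_{\varphi(y)}\rangle=\ell_\Y(\langle L,\varphi^*u\rangle)(y)$, which is exactly \eqref{eq:quasifatto2}. The only differences are presentational: you define the fibre functional directly with an explicit well-posedness check instead of restricting $\ell_\Y L$ through the isomorphism \eqref{eq:isopb}, and you spell out the injectivity step behind the bound $|\omega|\le|\mathcal I(\omega)|$, which the paper leaves implicit.
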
 
\begin{proof}
$L^\infty(\mu_\Y)$-linearity is obvious and the inequality $|\mathcal I(\omega_\cdot)|\leq |\omega_\cdot|$ valid $\mu_\Y$-a.e.\ that we already proved yields also continuity.

To conclude we thus need to prove surjectivity and the bound  $|\mathcal I(\omega_\cdot)|\geq |\omega_\cdot|$ (as in the proof of Theorem \ref{thm:dual2} this also gives the completeness of $L^\infty_{\sf weak}(\ell_\X{\mathscr M}'_{\varphi(\star)};\mu_\Y)$). Thus let $L\in(\varphi^*\mathscr M)^*$, consider its lifting $\ell_\Y L$ as in Proposition \ref{thm:lift_hom} and notice that keeping the isomorphism in \eqref{eq:isopb} into account we can see $\ell_\Y L$ as an $\mathcal L^\infty(\bar\Sigma_\Y)$-linear and continuous map from $L^\infty_{\sf str}(\ell_\X\mathscr M_{\varphi(\star)};\bar\Sigma_\Y,\{\varnothing\})$ to $\mathcal L^\infty(\bar\Sigma_\Y)$. Evaluate such map at $y\in\Y$ to deduce that $\ell_\Y L_y$ is a map from $L^\infty_{\sf str}(\ell_\X\mathscr M_{\varphi(\star)};\bar\Sigma_\Y,\{\varnothing\})_y\sim \ell_\X\mathscr M_{\varphi(y)}$ (the isomorphism being a rather trivial consequence of the definitions) to $\mathcal L^\infty(\bar\Sigma_\Y)_y\sim \R$. In other words $\bar\omega(y):=\ell_\Y L_y\in \ell_\X\mathscr M_{\varphi(y)}'$ and for any $v\in\mathscr M$ and $y\in\Y$ we have
\begin{equation}
\label{eq:quasifatto2}
\begin{split}
\langle\bar\omega(y),\ell_\X(v)_{\varphi(y)}\rangle
\stackrel{\phantom{\eqref{eq:fibreR}}}=&\langle \ell_\Y L_y,{\rm Pb}(\ell_\X(v))_{y}\rangle \stackrel{\eqref{eq:pbcomm}}= \langle \ell_\Y L_y,\ell_\Y(\varphi^*v)_y\rangle\\
\stackrel{\eqref{eq:fibreR}}=&\langle \ell_\Y L,\ell_\Y(\varphi^*v)\rangle(y)
\stackrel{\eqref{eq:lift_hom}}=\ell_\Y(\langle L,\varphi^*v\rangle)(y).
\end{split}
\end{equation}
This proves that  $y\mapsto \langle\bar\omega(y),\,\ell_\X(v)_{\varphi(y)}\rangle$ is measurable and thus, by the arbitrariness of $v\in\mathscr M$, that $\bar\omega\in \mathscr S_{\sf weak}(\ell_\X{\mathscr M}'_{\varphi(\star)})$. Hence we can define
\(\omega\coloneqq\pi_{\mu_\Y}(\bar\omega)\in \mathscr S_{\sf weak}(\ell_\X{\mathscr M}'_{\varphi(\star)})/\sim_{\sf weak}\). The inequality
\begin{equation}
\label{eq:fin2}
|\omega|\stackrel{\eqref{eq:defnw}}=\underset{\genfrac{}{}{0pt}{2}{v\in \mathscr M:}{\|v\|_{\mathscr M}\leq 1}}{\rm ess\,sup\,}\langle\bar \omega, \ell_\X (v)_{\varphi(\cdot)}\rangle\stackrel{\eqref{eq:quasifatto2}}=\underset{\genfrac{}{}{0pt}{2}{v\in \mathscr M:}{\|v\|_{\mathscr M}\leq 1}}{\rm ess\,sup\,}\ell_\Y\big(L(\varphi^*v)\big)=\underset{\genfrac{}{}{0pt}{2}{v\in \mathscr M:}{\|v\|_{\mathscr M}\leq 1}}{\rm ess\,sup\,}L(\varphi^*v)\leq|L|,
\end{equation}
valid $\mu_\Y$-a.e.\ shows that $|\omega|\in L^\infty(\mu_\Y)$, and thus that $\omega\in L^\infty_{\sf weak}(\ell_\X{\mathscr M}'_{\varphi(\star)};\mu_\Y)$. Also, unwrapping the definitions it is  immediate to check that \(\langle{\mathcal I}(\omega),\varphi^*v\rangle=L(\varphi^* v)\in L^\infty(\mu_\Y)\) for every \(v\in\mathscr M\) and thus  that \({\mathcal I}(\omega)=L\). This proves that \(\mathcal I\) is surjective and then \eqref{eq:fin2} shows that $|\omega|\leq|{\mathcal I}(\omega)|$, as desired.
\end{proof}

\appendix

\section{\texorpdfstring{$(\varphi^*\mathscr M)^*$}{(phi star M)star}  as space of local maps}
We give here an alternative description of $(\varphi^*\mathscr M)^*$. Here we intend duality in the sense of Definition \ref{def:dualm}.

Let \((\X,\Sigma_\X,\mu_\X)\) and \((\Y,\Sigma_\Y,\mu_\Y)\) be   $\sigma$-finite measure spaces,  \(\varphi\colon\Y\to\X\) a measurable map with
\(\varphi_*\mu_\Y\ll\mu_\X\) and \(\mathscr M\) an $L^p(\mu_\X)$-normed module. 

We denote by
\(\textsc{Hom}_{\rm loc}\big(\mathscr M;L^1(\mu_\Y)\big)\) the space of all linear operators \(T\colon\mathscr M\to L^1(\mu_\Y)\)
for which there exists a function $g\in L^q(\mu_\Y)^+$ satisfying 
\[
|T(v)|\leq g|v|\circ\varphi\qquad\text{ in the \(\mu_\Y\)-a.e.\ sense for every \(v\in\mathscr M\).}
\]
To any such \(T\), we associate the function $|T|\in L^q(\mu_\Y)$ defined as the essential infimum of the $g$'s as above, or equivalently as
\begin{equation}\label{eq:ptwse_norm_Hom}
|T|\coloneqq\underset{\genfrac{}{}{0pt}{2}{v\in {\mathscr M}:}{|v|\leq1\ \mu_\X-a.e.}}{\rm ess\,sup\,}T(v)\in L^q(\mu_\Y).
\end{equation}
One can easily check that \(\textsc{Hom}_{\rm loc}\big(\mathscr M;L^1(\mu_\Y)\big)\) is an $L^q(\mu_\Y)$-normed module if endowed with such pointwise norm and
\[\begin{split}
(T+S)(v)\coloneqq T(v)+S(v),&\quad\text{ for every }T,S\in\textsc{Hom}_{\rm loc}\big(\mathscr M;L^1(\mu_\Y)\big)\text{ and }v\in\mathscr M,\\
(f\cdot T)(v)\coloneqq f\,T(v),&\quad\text{ for every }f\in L^\infty(\mu_\Y)\text{ and }T\in\textsc{Hom}_{\rm loc}\big(\mathscr M;L^1(\mu_\Y)\big).
\end{split}\]
\begin{theorem}[Dual of the pullback as space of local maps]\label{thm:IIIpol} With the above assumptions and notation, the following holds.

The map \({\rm I}\colon(\varphi^*\mathscr M)^*\to\textsc{Hom}_{\rm loc}\big(\mathscr M;L^1(\mu_\Y)\big)\)
 given by
\[
{\rm I}(L)(v)\coloneqq L(\varphi^*v)\in L^1(\mu_\Y),\quad\text{ for every }L\in(\varphi^*\mathscr M)^*\text{ and }v\in\mathscr M
\]
is an isomorphism of $L^q(\mu_\Y)$-normed modules.  
\end{theorem}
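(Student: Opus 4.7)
The plan is to use Proposition \ref{prop:univ_prop_pullback-pol} to produce the inverse of \({\rm I}\), and then match pointwise norms via two complementary inequalities. Throughout I denote by \(|L|\in L^q(\mu_\Y)\) the pointwise norm of \(L\in(\varphi^*\mathscr M)^*\) in the sense of Definition \ref{def:dualm}, and by \(|T|\in L^q(\mu_\Y)\) the pointwise norm of \(T\in\textsc{Hom}_{\rm loc}\big(\mathscr M;L^1(\mu_\Y)\big)\) as in \eqref{eq:ptwse_norm_Hom}.

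First, for \(L\in(\varphi^*\mathscr M)^*\) set \(T\coloneqq{\rm I}(L)\). Applying \(|L(W)|\leq|L|\,|W|\) to \(W=\varphi^* v\) and using \(|\varphi^* v|=|v|\circ\varphi\) from Theorem/Definition \ref{thm:defpb}, one gets \(|T(v)|\leq |L|\,|v|\circ\varphi\) \(\mu_\Y\)-a.e.\ with \(|L|\in L^q(\mu_\Y)\); hence \(T\in\textsc{Hom}_{\rm loc}(\mathscr M;L^1(\mu_\Y))\) and \(|T|\leq|L|\). Linearity of \({\rm I}\) is immediate, and \(L^\infty(\mu_\Y)\)-linearity follows from the very definition of the module operations, since \((f\cdot L)(\varphi^* v)=f\cdot L(\varphi^* v)=f\cdot{\rm I}(L)(v)\).

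For surjectivity and the opposite inequality, given \(T\in\textsc{Hom}_{\rm loc}(\mathscr M;L^1(\mu_\Y))\) choose \(g\coloneqq|T|\in L^q(\mu_\Y)\) and invoke Proposition \ref{prop:univ_prop_pullback-pol} to produce a linear continuous \(\hat T\colon\varphi^*\mathscr M\to L^1(\mu_\Y)\) with \(\hat T\circ\varphi^*=T\) and \(|\hat T(W)|\leq|T|\,|W|\) for every \(W\in\varphi^*\mathscr M\). Before \(\hat T\) qualifies as an element of \((\varphi^*\mathscr M)^*\) one needs its \(L^\infty(\mu_\Y)\)-linearity: for any \(E\in\Sigma_\Y\) the bound \(|\hat T(\1_E W)|\leq|T|\,\1_E|W|\) forces \(\hat T(\1_E W)\) to vanish outside \(E\), and the symmetric argument on \(\Y\setminus E\) combined with linearity gives \(\hat T(\1_E W)=\1_E\,\hat T(W)\); approximating \(f\in L^\infty(\mu_\Y)\) uniformly by simple functions and using continuity extends this identity to all scalars. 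The pointwise bound also yields \(|\hat T|\leq|T|\). By construction \({\rm I}(\hat T)=T\), proving surjectivity; injectivity follows from the uniqueness clause of Proposition \ref{prop:univ_prop_pullback-pol}, since two preimages of \(T\) must coincide on the generating family \(\{\varphi^* v:v\in\mathscr M\}\).

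Finally, combining the two steps for \(L\in(\varphi^*\mathscr M)^*\) with \(T={\rm I}(L)\) (so that \(\hat T=L\) by uniqueness), we conclude \(|T|\leq|L|\leq|T|\), hence \(|{\rm I}(L)|=|L|\) \(\mu_\Y\)-a.e. Together with bijectivity and \(L^\infty(\mu_\Y)\)-linearity, this establishes that \({\rm I}\) is an isomorphism of \(L^q(\mu_\Y)\)-normed modules. The only step that is not a direct unwrapping of definitions is the \(L^\infty(\mu_\Y)\)-linearity of the extension \(\hat T\), which is precisely what promotes it from a continuous linear functional to an element of the module dual; this is extracted, as described above, from the pointwise bound supplied by Proposition \ref{prop:univ_prop_pullback-pol}.
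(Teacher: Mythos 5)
Your proof is correct and follows essentially the same route as the paper: the bound \(|{\rm I}(L)(v)|\leq|L|\,|v|\circ\varphi\) gives \({\rm I}(L)\in\textsc{Hom}_{\rm loc}\big(\mathscr M;L^1(\mu_\Y)\big)\) with \(|{\rm I}(L)|\leq|L|\), while Proposition \ref{prop:univ_prop_pullback-pol} applied with \(g=|T|\) yields surjectivity together with the reverse inequality, and injectivity/norm preservation follow from the uniqueness clause. Your explicit verification that the extension \(\hat T\) is \(L^\infty(\mu_\Y)\)-linear (extracted from the locality bound \(|\hat T(W)|\leq|T|\,|W|\)) is a welcome detail that the paper leaves implicit when it reads the proposition as directly producing an element of \((\varphi^*\mathscr M)^*\).
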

\begin{proof}
Given any \(L\in(\varphi^*\mathscr M)^*\) and \(v\in\mathscr M\), we can estimate \(\big|{\rm I}(L)(v)\big|\leq|L||\varphi^*v|=|L||v|\circ\varphi\)
in the \(\mu_\Y\)-a.e.\ sense. Since \(|L|\in L^q(\mu_\Y)\), we deduce that \({\rm I}(L)\in\textsc{Hom}_{\rm loc}\big(\mathscr M;L^1(\mu_\Y)\big)\)
and that \(|{\rm I}(L)|\leq|L|\) holds \(\mu_\Y\)-a.e.. The resulting operator \({\rm I}\colon(\varphi^*\mathscr M)^*\to
\textsc{Hom}_{\rm loc}\big(\mathscr M;L^1(\mu_\Y)\big)\) is linear by construction. Moreover, given any element
\(T\in\textsc{Hom}_{\rm loc}\big(\mathscr M;L^1(\mu_\Y)\big)\), we infer from Proposition \ref{prop:univ_prop_pullback-pol}
that there exists \(L\in(\varphi^*\mathscr M)^*\) such that \(L(\varphi^*v)=T(v)\) for every \(v\in\mathscr M\) and
\(|L|\leq|T|\) in the \(\mu_\Y\)-a.e.\ sense. This implies that \(T={\rm I}(L)\) and that \(|L|\leq|{\rm I}(L)|\) holds in the \(\mu_\Y\)-a.e.\ sense.
Consequently, the statement is achieved.
\end{proof}

\section{Sequential weak\texorpdfstring{$^*$}{star}-density of \texorpdfstring{$\varphi^*\mathscr M^*$}{phi star M star} in \texorpdfstring{$(\varphi^*\mathscr M)^*$}{(phi star M)star}}

We recalled in the introduction that in general $\varphi^*\mathscr M^*$ might be smaller than  $(\varphi^*\mathscr M)^*$. Here we show that, under general assumptions on the underlying measure structures, it is always sequentially weakly$^*$ dense.

\bigskip

Let \((\X,\Sigma_\X,\mu_\X)\) and \((\Y,\Sigma_\Y,\mu_\Y)\) be separable and  $\sigma$-finite measure spaces,  \(\varphi\colon\Y\to\X\) a measurable  map with
\(\varphi_*\mu_\Y\ll\mu_\X\) and \(\mathscr M\) an $L^p(\mu_\X)$-normed module. In \cite{Gigli14} it has been proved that there is a unique  \(L^\infty(\mu_\Y)\)-linear, pointwise norm preserving map  \({\sf I}_\varphi\colon\varphi^*\mathscr M^*\to(\varphi^*\mathscr M)^*\) satisfying
\[
\big\langle{\sf I}_\varphi(\varphi^*\omega),\varphi^*v\big\rangle=\langle\omega,v\rangle\circ\varphi,
\quad\text{ for every }\omega\in\mathscr M^*\text{ and }v\in\mathscr M.
\]
In the proof of the main result of this section it will be useful to have at disposal a left inverse of the pullback of functions via post-composition with $\varphi$. We thus define the map ${\rm Pr}:  L^\infty(\mu_\Y)\to L^\infty(\mu_\X)$ as
\[
{\rm Pr}(f)(x):=\frac{\d(\varphi_*(f\mu_\Y))}{\d\mu_\X}(x)\qquad\mu_\X-a.e.\ x.
\]
It is clear that for any $f\in L^\infty(\mu_\Y)$ and $g\in L^\infty(\mu_\X)$ it holds
\begin{equation}
\label{eq:prpr}
\begin{split}
|{\rm Pr}(f)|&\leq{\rm Pr}(|f|),\\
{\rm Pr}(f\,g\circ \varphi)&=g\,{\rm Pr}(f).
\end{split}
\end{equation}
The first of these shows that ${\rm Pr}$ is a contraction from $L^\infty(\mu_\Y)$ to $L^\infty(\mu_\X)$ and also from $L^1\cap L^\infty(\mu_\Y)$ to $L^1\cap L^\infty(\mu_\X)$ both endowed with the corresponding $L^1$ norm. It follows that for any $p\in[1,\infty]$ there is a unique linear and continuous extension, still denoted by ${\rm Pr},$ of ${\rm Pr}$ from $L^p(\mu_\Y)$ to $L^p(\mu_\X)$.

We then have:
\begin{theorem}[Sequential weak$^*$-density  of $\varphi^*\mathscr M^*$ in  $(\varphi^*\mathscr M)^*$]\label{thm:dual_of_pullback_I}
 With the same notation and assumptions as above, for any $L\in (\varphi^*\mathscr M)^*$ there is $(L_n)\subset \varphi^*\mathscr M^*$ such that ${\sf I}_\varphi(L_n)(V)\to L(V)$ in $L^1(\mu_\Y)$ for every $V\in \varphi^*\mathscr M$. 
 \end{theorem}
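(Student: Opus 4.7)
My plan is to approximate $L$ by a martingale-type construction adapted to the pullback setting. I shall use a refining sequence of countable partitions of $\Y$ (available by the separability of $(\Y,\Sigma_\Y,\mu_\Y)$) and average $L$ cell-by-cell via the operator ${\rm Pr}$: each average produces a bona fide element of $\mathscr M^*$, and, glued together appropriately via $\varphi^*$, these yield an element of $\varphi^*\mathscr M^*$ whose image under ${\sf I}_\varphi$ is essentially the conditional expectation of $L$ with respect to $\sigma(\mathcal P_n\vee\varphi^{-1}(\Sigma_\X))$. Doob's martingale convergence theorem then delivers the desired $L^1(\mu_\Y)$ convergence.

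\textbf{Construction.} Fix refining countable measurable partitions $\mathcal P_n=(F_n^k)_k$ of $\Y$ into sets of finite $\mu_\Y$-measure such that $\bigcup_n\sigma(\mathcal P_n)$ generates $\Sigma_\Y$ modulo $\mu_\Y$-null sets. For each $n,k$ set
\[
\omega_{n,k}(v):={\rm Pr}\bigl(\1_{F_n^k}\,L(\varphi^*v)\bigr),\qquad v\in\mathscr M.
\]
The second identity in \eqref{eq:prpr} shows $\omega_{n,k}$ is $L^\infty(\mu_\X)$-linear, while the first (contractivity of ${\rm Pr}$) gives $|\omega_{n,k}|\leq{\rm Pr}(\1_{F_n^k}|L|)\in L^q(\mu_\X)$, so $\omega_{n,k}\in\mathscr M^*$. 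Now define
\[
L_n:=\sum_k\rho_n^k\cdot\varphi^*\omega_{n,k},\qquad\rho_n^k:=\frac{\1_{F_n^k}\cdot\1_{\{{\rm Pr}(\1_{F_n^k})\circ\varphi\,\geq\,1/M_n\}}}{{\rm Pr}(\1_{F_n^k})\circ\varphi},
\]
where $M_n\uparrow\infty$ is a threshold chosen below; by construction $\rho_n^k\in L^\infty(\mu_\Y)$. Writing $\rho:=\frac{d\varphi_*\mu_\Y}{d\mu_\X}$ and using the identity ${\rm Pr}(h)\circ\varphi=\mathbb E_{\mu_\Y}[h\mid\varphi^{-1}(\Sigma_\X)]\cdot(\rho\circ\varphi)$, valid for every $h\in L^1(\mu_\Y)$, a direct computation shows that the factors $\rho\circ\varphi$ cancel in the ratio defining $\rho_n^k\cdot(\omega_{n,k}(v)\circ\varphi)$, and leaves the clean pointwise majorization $|L_n|\leq\mathbb E_{\mu_\Y}\bigl[|L|\,\big|\,\sigma(\mathcal P_n\vee\varphi^{-1}(\Sigma_\X))\bigr]$, whose $L^q(\mu_\Y)$-norm is at most $\||L|\|_{L^q(\mu_\Y)}$. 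Disjointness of the cells together with this uniform majorization ensures that the defining series exists as a glueing in $\varphi^*\mathscr M^*$.

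\textbf{Convergence.} The same cancellation computation, applied without absolute values, gives
\[
{\sf I}_\varphi(L_n)(\varphi^*v)=\1_{A_n}\cdot\mathbb E_{\mu_\Y}\bigl[L(\varphi^*v)\,\big|\,\sigma(\mathcal P_n\vee\varphi^{-1}(\Sigma_\X))\bigr],
\]
where $A_n:=\bigcup_k\{y\in F_n^k:{\rm Pr}(\1_{F_n^k})\circ\varphi(y)\geq 1/M_n\}$. Since ${\rm Pr}(\1_{F_n^k})\circ\varphi>0$ $\mu_\Y$-a.e.\ on $F_n^k$, dominated convergence ensures that for any fixed $v$ the `untruncated' region $\Y\setminus A_n$ can be made $\mu_\Y$-null in the $L^1(\mu_\Y)$-norm of the product by choosing $M_n$ suitably large along a diagonal sequence. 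Doob's theorem, applicable because $\sigma(\mathcal P_n\vee\varphi^{-1}(\Sigma_\X))\supset\sigma(\mathcal P_n)$ generates $\Sigma_\Y$ modulo null sets, then gives ${\sf I}_\varphi(L_n)(\varphi^*v)\to L(\varphi^*v)$ in $L^1(\mu_\Y)$ for every $v\in\mathscr M$. The uniform bound $|L_n|\leq\mathbb E_{\mu_\Y}[|L|\mid\cdot]\leq\||L|\|_{L^q(\mu_\Y)}$-majorant, combined with H\"older's inequality and the density of the $L^\infty(\mu_\Y)$-module generated by $\{\varphi^*v:v\in\mathscr M\}$ in $\varphi^*\mathscr M$, extends the convergence from pullbacks to arbitrary $V\in\varphi^*\mathscr M$.

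\textbf{Main obstacle.} The principal technical difficulty is the potential unboundedness of the factor $1/({\rm Pr}(\1_{F_n^k})\circ\varphi)$: although a.e.\ positive on $F_n^k$, it may be arbitrarily small there, so a naive glueing need not land in $\varphi^*\mathscr M^*$. The truncation at threshold $M_n$ cures this, but forces a careful diagonal argument to ensure that the truncation error and the martingale-approximation error vanish simultaneously as $n\to\infty$.
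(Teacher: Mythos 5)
Your construction is, at bottom, the same as the paper's: you average $L$ over the cells of a refining family of partitions furnished by separability, via $v\mapsto{\rm Pr}\bigl(\1_{F}\,L(\varphi^*v)\bigr)$, divide by ${\rm Pr}(\1_{F})\circ\varphi$ on each cell, glue, and reduce the convergence to generators. The argument is correct, but the two technical points are handled differently. For membership of the glued object in $\varphi^*\mathscr M^*$ you truncate the coefficients at level $M_n$ and must later remove the truncation; the paper instead reads the untruncated sum in the $L^0$-completion and shows directly, via the Jensen-type inequality \eqref{eq:dopop} for ${\rm Pr}$ (the same estimate your conditional Jensen bound encodes), that its pointwise norm is $q$-integrable with norm controlled by that of $|L|$, so no truncation and no diagonal choice are needed at all. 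For the convergence, you pass through conditional expectations with respect to $\sigma(\mathcal P_n\vee\varphi^{-1}(\Sigma_\X))$ and Doob's theorem; the paper argues more elementarily that the averaging maps $f\mapsto f_k$ are norm-one operators on $L^1(\mu_\Y)$ which are eventually the identity on partition-adapted simple functions, thereby avoiding conditional expectations relative to $\varphi^{-1}(\Sigma_\X)$ (these, as well as your density $\rho$, implicitly require $\mu_\Y$ to be $\sigma$-finite on $\varphi^{-1}(\Sigma_\X)$; this is on the same footing as what is implicitly needed for ${\rm Pr}$ in \eqref{eq:prpr}, so it is not a real loss, but the paper's route does not even mention it).

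One point you should tighten: the thresholds $M_n$ must be chosen independently of $v$, since a single sequence $(L_n)$ has to serve every $V\in\varphi^*\mathscr M$, and your phrase ``diagonal sequence'' leaves the quantifier order ambiguous (a diagonal over the $v$'s would require a separability of $\mathscr M$ that is not assumed). The fix is available inside your scheme: $\Y\setminus A_n$ is $\sigma(\mathcal P_n\vee\varphi^{-1}(\Sigma_\X))$-measurable, so the truncation error is bounded by $\int_{\Y\setminus A_n}|L(\varphi^*v)|\,\d\mu_\Y$; choosing $M_n$ so that $\mu_\Y(Y_m\setminus A_n)\le 2^{-n}$ for all $m\le n$, where $(Y_m)$ is a fixed finite-measure exhaustion of $\Y$, makes this vanish for every fixed $v$ by absolute continuity of the integral, with $(M_n)$ depending only on the partitions and the exhaustion.
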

\begin{proof} 
Fix $L\in(\varphi^*\mathscr M)^*$ and for $E\in\Sigma_\Y$ define $L_E\in\mathscr M^*$ as $L_E(v):={\rm Pr}(\1_E^{\mu_\Y}L(\varphi^*v))$. It is clear that $L_E$ is linear and continuous, to prove $L^\infty(\mu_\X)$-linearity we notice that \eqref{eq:prpr} gives
\begin{equation}
\label{eq:nple}
|L_E(v)|\leq {\rm Pr}(\1_E^{\mu_\Y}|L|\,|v|\circ\varphi)=|v|\,{\rm Pr}(\1_E^{\mu_\Y}|L|)
\end{equation}
and the claim easily follows.

Now let $\mathcal P=(\mathcal P_k)$ be a sequence of partitions of $\Y$ as in Theorem \ref{thm:partition_Doob}, say $\mathcal P_k=\{E^k_j\}_{j}$, and for every $k\in\N$ define 
\[
L_k:=\sum_{j:\,\mu_\Y(E^k_j)>0}\1_{E^k_j}^{\mu_\Y}\frac{\varphi^*L_{E^k_j}}{{\rm Pr}(\1_{E^k_j}^{\mu_\Y})\circ\varphi}.
\]
Notice that in principle we only know that $L_k$ is an element of the $L^0$-completion of $\varphi^*\mathscr M^*$. We prove that actually $L_k\in \varphi^*\mathscr M^*$ with a uniform bound on the norm and to this aim we are going to use the inequality
\begin{equation}
\label{eq:dopop}
|{\rm Pr}(\1_E^{\mu_\Y}f)|^p\leq {\rm Pr}(\1_E^{\mu_\Y}|f|^p)\,{\rm Pr}(\1_E^{\mu_\Y})^{p-1}\qquad\forall p\in[1,\infty),\ f\in L^p(\mu_\Y),\ E\in\Sigma_\Y 
\end{equation}
that we shall prove later on. We have
\[
\begin{split}
|L_k|^p&\leq \sum_{j}\1_{E^k_j}^{\mu_\Y}\frac{|L_{E^k_j}|^p}{{\rm Pr}(\1_{E^k_j}^{\mu_\Y})^p}\circ\varphi\stackrel{\eqref{eq:nple}}\leq \sum_{j}\1_{E^k_j}^{\mu_\Y}\frac{{\rm Pr}(\1_{E^k_j}^{\mu_\Y}|L|)^p}{{\rm Pr}(\1_{E^k_j}^{\mu_\Y})^p}\circ\varphi\stackrel{\eqref{eq:dopop}}\leq \sum_{j}\1_{E^k_j}^{\mu_\Y}\frac{{\rm Pr}(\1_{E^k_j}^{\mu_\Y}|L|^p)}{{\rm Pr}(\1_{E^k_j}^{\mu_\Y})}\circ\varphi,
\end{split}
\]
and by integration we conclude that $\int |L_k|^p\,\d\mu_\Y\leq \int |L|^p\,\d\mu_\Y<\infty$.

Thus our claim is proved and hence, by $L^\infty(\mu_\Y)$-linearity and the fact that $\{\varphi^*v:v\in\mathscr M\}$ generates $\varphi^*\mathscr M$,  to prove that the $L_k$'s satisfy the conclusion it is sufficient to check that for any $v\in\mathscr M$ we have ${\sf I}_\varphi(L_k)(\varphi^*v)\to L(\varphi^*v)$ in $L^1(\mu_\Y)$.

Since for any  $v\in\mathscr M$ we have
\[
{\sf I}_\varphi(L_k)(\varphi^*v)=\sum_j\1_{E_{k,j}}^{\mu_\Y}\frac{{\rm Pr}(\1_{E^k_j}^{\mu_\Y}L(\varphi^*v))}{{\rm Pr}(\1_{E^k_j}^{\mu_\Y})}\circ\varphi,
\]
the conclusion will follow if we show that for any $f\in L^1(\mu_\Y)$ we have $f_k\to f$ in $L^1(\mu_\Y)$, where 
\[
f_k:=\sum_j\1_{E^k_j}^{\mu_\Y}\frac{{\rm Pr}(\1_{E^k_j}^{\mu_\Y}f)}{{\rm Pr}(\1_{E^k_j}^{\mu_\Y})}\circ\varphi.
\] To see this notice that the maps sending $f$ to $f_k$ are linear and with norm 1 (seen as operators from $L^1(\mu_\Y)$ into itself), thus the claim will follow if we show that the desired convergence is in place for a dense set of $f$'s. As dense set we pick the space of linear combinations of functions of the kind $\1_{E^{k}_j}^{\mu_\Y}$ for some $k,j$ (density follows from the separability assumption and the construction of partitions): for $f$ of this form  we trivially have $f_{k'}=f$ for all sufficiently big $k'$, whence the conclusion follows.

It remains to prove \eqref{eq:dopop}. If $\ell:\R\to\R$ is affine we have $\ell\circ \frac{{\rm Pr}(\1_E^{\mu_\Y}f)}{{\rm Pr}(\1_E^{\mu_\Y})}=\frac{{\rm Pr}(\1_E^{\mu_\Y}(\ell\circ f))}{{\rm Pr}(\1_E^{\mu_\Y})}$, hence by the monotonicity of ${\rm Pr}$ and recalling that any $u:\R\to\R$ convex and lower semicontinuous is the supremum of affine functions $\leq u$ we have
\[
u\circ  \frac{{\rm Pr}(\1_E^{\mu_\Y}f)}{{\rm Pr}(\1_E^{\mu_\Y})}=\sup_{\ell\leq u}\ell\circ \frac{{\rm Pr}(\1_E^{\mu_\Y}f)}{{\rm Pr}(\1_E^{\mu_\Y})}=\sup_{\ell\leq u}\frac{{\rm Pr}(\1_E^{\mu_\Y}(\ell\circ f))}{{\rm Pr}(\1_E^{\mu_\Y})}\leq \frac{{\rm Pr}(\1_E^{\mu_\Y}(u\circ f))}{{\rm Pr}(\1_E^{\mu_\Y})}.
\]
Picking $u(z):=|z|^p$ the claim  \eqref{eq:dopop} follows.
\end{proof}
\begin{remark}\label{re:sorpresa}{\rm In a topological vector space (and in fact in any uniformity) it makes sense to say that a net $(v_{\alpha})_{\alpha\in D}$ is Cauchy: it is so if for every neighbourhood $U$ of 0 there is $\alpha\in D$ such that $v_{\beta}-v_{\beta'}\in U$ for every $\beta,\beta'>\alpha$. A TVS is called complete if any Cauchy net converges and it can be proved that any Hausdorff TVS admits a unique (up to unique isomorphism) Hausdorff completion (see e.g.\ \cite{narici2010topological}).

The space $\varphi^*\mathscr M^*$ equipped with the weak$^*$ topology it inherits from its embedding in $(\varphi^*\mathscr M)^*$ is obviously Hausdorff (because the weak$^*$ topology is Hausdorff, here we are using the fact that the module dual and the Banach dual of a module are canonically isomorphic to speak about weak$^*$ topology on $(\varphi^*\mathscr M)^*$) and therefore from Theorem \ref{thm:dual_of_pullback_I} above one might wonder whether the completion of  $\varphi^*\mathscr M^*$ with the weak$^*$ topology is precisely $(\varphi^*\mathscr M)^*$ with the weak$^*$ topology.

This is not the case. The problem, well known to experts, is that the dual $B'$ of an infinite dimensional Banach space $B$ equipped with the weak$^*$ topology is not complete(!): its completion consists of the space of \emph{all} real valued linear functionals on $B$, including the discontinuous ones. To see why, let $L:B\to\R$ be linear, possibly discontinuous,  consider as direct set $D$ the collection of finite subsets of $B$, ordered by inclusion, and  for $\alpha:=\{v_1,\ldots,v_n\}\in D$ notice that the restriction of $L$ to the linear span of $\{v_1,\ldots,v_n\}$ is continuous (by finite dimensionality) and thus by Hahn-Banach it can be extended to an element $L_\alpha$ of $B'$. It is then clear that $(L_\alpha)_{\alpha\in D}$ is a Cauchy net in $B'$ that is converging to $L$. Notice that this construction heavily relies on the Hahn-Banach theorem, and thus in some rather strong version of Choice.
}\fr\end{remark}

\def\cprime{$'$} \def\cprime{$'$}

\end{document}